\newtheorem{introtheorem}{Theorem}
\newtheorem{theorem}{Theorem}[section]
\newtheorem{corollary}[theorem]{Corollary}
\newtheorem{proposition}[theorem]{Proposition}
\newtheorem{lemma}[theorem]{Lemma}
\newtheorem{question}[theorem]{Question}
\theoremstyle{remark}
\theoremstyle{definition}
\newtheorem{definition}[theorem]{Definition}
\newtheorem{example}[theorem]{Example}
\newcommand{\llangle}{\langle \langle}
\newcommand{\rrangle}{\rangle \rangle}
\newcommand{\bd}{\partial}
\DeclareMathOperator{\Lab}{Lab}
\DeclareMathOperator{\asdim}{asdim}
\DeclareMathOperator{\asdimAN}{asdim\textnormal{\textsubscript{AN}}}
\DeclareMathOperator{\dimAN}{dim\textnormal{\textsubscript{AN}}}
\DeclareMathOperator{\diam}{diam}
\title{Assouad-Nagata dimension of finitely generated $C'(\sfrac{1}{6})$ groups}
\author{Levi Sledd}
\begin{document}
\maketitle

\begin{abstract}
This paper is the first in a two-part series.  In this paper, we prove that the Assouad-Nagata dimension of any finitely generated (but not necessarily finitely presented) $C'(\sfrac{1}{6})$ group is at most 2.  In the next paper, we use this result, along with techniques of classical small cancellation theory, to answer two open questions in the study of asymptotic and Assouad-Nagata dimension of finitely generated groups.
\end{abstract}

\section{Introduction}

Asymptotic Assouad-Nagata dimension ($\asdimAN$) is a way of defining the dimension of a metric space at large scales, first defined in 1982 by Assouad and influenced by the work of Nagata \cite{Assouad}.  A related, weaker notion of dimension is that of asymptotic dimension ($\asdim$), introduced by Gromov in his landmark 1993 paper \cite{Gromov}.  Since $\asdim$ and $\asdimAN$ are invariant under quasi-isometry, they have naturally become useful tools in geometric group theory: see \cite{Bell_Dranishnikov} or \cite{Bedlewo} for a good introductory survey on asymptotic dimension in group theory, and \cite{Brodskiy_etal} for many corresponding results for Assouad-Nagata dimension.  For finitely generated groups with the word metric, the subject of this paper, \emph{asymptotic} Assouad-Nagata dimension and Assouad-Nagata dimension (usually abbreviated $\dimAN$) are equivalent.  Thus, when talking about finitely generated groups, we use the shorter ``Assouad-Nagata dimension," which we continue to denote by $\asdimAN$.

In this paper, we prove the following theorem.

\begin{introtheorem}\label{IntroThm2}
Every finitely generated $C'(\sfrac{1}{6})$ group has Assouad-Nagata dimension at most $2$.
\end{introtheorem}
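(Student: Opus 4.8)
The plan is to reduce $\asdimAN(G)\le 2$ to a scale‑by‑scale covering problem and then solve it using the combinatorics of van Kampen diagrams over a $C'(\sfrac{1}{6})$ presentation. Fix such a presentation $\langle S\mid R\rangle$ with $S$ finite and $R$ symmetrised, and let $X$ be the Cayley graph of $G$ with respect to $S$. It suffices to produce, for every $r>0$, a cover of $X$ of multiplicity $\le 3$ and Lebesgue number $\ge r$ by sets of diameter at most $D(r)$, where $D$ is a fixed \emph{linear} function — equivalently, a $3$-colouring of $X$ at every scale with uniformly and linearly controlled colour classes. As a template, note that if the presentation happens to be finite then $G$ is word hyperbolic; since $C'(\sfrac{1}{6})$ presentations are diagrammatically aspherical we get $\operatorname{cd}(G)\le 2$, hence $\dim\partial G\le 1$ by Bestvina–Mess and $\asdimAN(G)=\dim\partial G+1\le 2$ by a theorem of Buyalo–Lebedeva. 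So the bound is what one expects, the ``$2$'' should ultimately come from the $2$-dimensionality of the aspherical Cayley complex, and the task is to reproduce this conclusion with no hyperbolicity available.

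The key geometric input I would isolate is that $X$ is \emph{hyperbolic with a linearly growing constant}. Peeling off Greendlinger cells one at a time shows that every $2$-cell in a reduced diagram $D$ has boundary length less than $2\lvert\partial D\rvert$, and that $D$ has $O(\lvert\partial D\rvert)$ cells (a linear isoperimetric inequality over $R$ with a constant depending only on the small cancellation parameter — which does \emph{not} make $G$ hyperbolic, since $R$ is infinite). Applied to a reduced diagram filling a geodesic $k$-gon of perimeter $L$, this bounds all cell sizes by $2L$ and exhibits the diagram as a ``ladder of ladders'', so that geodesic bigons and triangles in $X$ are $O(L)$-thin at scale $L$. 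Two further elementary facts round this out: since $S$ is finite, there are only finitely many relators of each length, so the presentation is of finite type up to every fixed scale; and a standard small cancellation argument shows each relator cycle is quasi-isometrically embedded in $X$ with constants depending only on the parameter. Morally, $X$ is what a $\delta$-hyperbolic space of ``dimension $2$'' looks like once one allows $\delta$ to grow linearly with the scale, and linear-in-scale error is exactly what Assouad–Nagata dimension tolerates — this is the same mechanism by which trees and hyperbolic groups have finite $\asdimAN$.

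With this in hand I would build the covers by importing a proof that hyperbolic groups satisfy $\asdimAN=\dim\partial G+1$, but tracking the linear dependence on the scale at every step and using $2$-dimensionality of diagrams to keep the multiplicity equal to $3$. Because the error is linear, a single basepoint is useless; instead, at scale $r$ one works with an $r$-net and covers $X$ by ``shadow''/``horoball''-type pieces of diameter $O(r)$ organised along the coarse tree structure furnished by the ladder decomposition, coloured so that the tree direction contributes $2$ colours and the transverse ``relator'' direction contributes $2$ colours. One way to organise the same argument is a Hurewicz-type splitting: $X$ coarsely fibres — with uniform linear control at scale $r$ — over a base of Assouad–Nagata dimension $\le 1$ (a tree-like, linearly‑hyperbolic space) whose fibres are sub-arcs of relator cycles, hence quasi-lines with uniform constants and Assouad–Nagata dimension $\le 1$ uniformly; the Hurewicz theorem for Assouad–Nagata dimension then gives $\asdimAN(X)\le 1+1=2$.

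The main obstacle is uniformity. If the presentation were finite, all of this would be classical; the entire difficulty is that relator lengths are unbounded, so there is no uniform hyperbolicity constant, the Cayley complex is not locally finite, and the ``constants'' in small cancellation lemmas a priori depend on the relators involved. The real work is to extract precisely the scale‑invariant (or linearly scaling) content of the $C'(\sfrac{1}{6})$ condition — uniform linear isoperimetry over $R$, $O(L)$-size cells and $O(L)$-thinness for geodesic polygons of size $L$, uniform quasiconvexity of relators — and then to assemble a genuinely global coarse structure (or a coherent family of scale‑$r$ structures) out of a priori local diagram combinatorics, all while keeping every cover diameter linear in $r$ and, most delicately, keeping the multiplicity at exactly $3$ rather than at some larger finite number — which is the point at which the asphericity and $2$-dimensionality of the Cayley complex must be used essentially.
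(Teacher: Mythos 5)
Your proposal is a plan rather than a proof, and the step that carries all the content is missing. The quantitative claim you lean on --- that geodesic triangles of perimeter $L$ are ``$O(L)$-thin at scale $L$'' --- is vacuous: in \emph{any} geodesic space, every point of one side of a triangle is within $L$ of the other two sides, so this statement gives no control whatsoever. What is actually needed is a much finer structural statement about how geodesics with nearby endpoints can fan out, and in the paper this comes from Strebel's classification of minimal van Kampen diagrams over \emph{simple} geodesic triangles in $C'(\sfrac{1}{6})$ groups (the precise ``ladder'' shapes I--II, III, IV, V), together with a sequence of sharp inequalities comparing the sides of the corner regions of such a diagram. Your ``ladder of ladders'' remark gestures at this, but you never extract from it anything that a covering argument could use.

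The second gap is the assembly step. Your proposed route --- a coarse fibration of the Cayley graph over a tree-like base with quasi-line fibres, fed into the Hurewicz theorem --- is not constructed, and it is far from clear that such a fibration exists for an infinitely presented group; this is exactly the kind of global structure that the lack of uniform hyperbolicity obstructs. The paper's mechanism is different and more elementary: it applies the Hurewicz theorem to the $1$-Lipschitz map $d_1\colon G\to\mathbb R_0^+$ (distance to the identity), and bounds $\asdimAN(d_1)$ by showing that a geodesic spanning tree rooted at $1$ is $(\sfrac{1}{9},2)$-\emph{tight}: for every $r$ and every $y$, the tree geodesics from $1$ to the points of $B(y,r/9)$ meet any sphere $S(t)$ with $t\le \|y\|-r$ in at most $2$ points. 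This sphere-crossing bound is what produces covers of annuli with multiplicity $3$ and diameter linear in $r$; it does not come from asphericity or cohomological dimension of the Cayley complex. Proving tightness is the real work: one assumes three such geodesics diverge, glues the two resulting Strebel triangle diagrams along the common geodesic $[1,x]$, and derives a contradiction with $\varepsilon\le\sfrac{1}{9}$ from the $C'(\sfrac{1}{6})$ piece inequalities. Nothing in your proposal supplies this argument or an equivalent substitute, so the proof does not go through as written.
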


Although interesting in its own right, we believe that the real value of this result is that with it one can readily apply techniques of small cancellation theory to the study of asymptotic and Assouad-Nagata dimension of finitely generated groups.  We demonstrate this in a companion paper to this one, where we use \cref{IntroThm2} in order to prove the following.

\begin{introtheorem}\label{IntroThm1}
For every $k, m, n \in \mathbb N \cup \{\infty\}$ with $4 \leq k \leq m \leq n$, there exist finitely generated, recursively presented groups $H$ and $G$ with $H \leqslant G$, such that
\begin{align*}
\asdim(G) &= k\\
\asdimAN(G) &= m\\
\asdimAN(H) &=n \, .
\end{align*}
\end{introtheorem}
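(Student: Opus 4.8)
The plan is to realize each admissible triple $(k,m,n)$ by encoding prescribed large-scale geometry into a finitely generated group via (iterated) classical small cancellation, using \cref{IntroThm2} to keep the ``small-cancellation skeleton'' of the construction from contributing more than $2$ to the Assouad--Nagata dimension. The degenerate case $k=m=n<\infty$ is handled by $\mathbb{Z}^{k}$, and the case $k=\infty$ (whence $m=n=\infty$) by arranging that $G$ contains a finitely generated group of infinite asymptotic dimension; the content is in the strict inequalities, and the hypothesis $k\ge 4$ will enter precisely as the room needed for a $2$-dimensional ``honest flat'' alongside the $2$ supplied by \cref{IntroThm2}.

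First I would fix finitely many building blocks: finitely generated, recursively presented groups $A_{1},\dots,A_{d}$ of small, fully understood dimension --- copies of $\mathbb{Z}^{j}$ with $j\le k$, and, in the all-infinite case, a single group of infinite asymptotic dimension such as $\mathbb{Z}\wr\mathbb{Z}$. I would then build $G$ either as a $C'(\sfrac{1}{6})$ quotient of the free product $A=A_{1}\ast\cdots\ast A_{d}$, or --- since a single such quotient is a relatively hyperbolic group over peripherals of Assouad--Nagata dimension $\le k$ and so cannot have $\asdimAN$ much above $k$, which forces the issue once $m>k$ --- as an iterated, lacunary tower of such quotients with $G$ its direct limit. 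The defining relators are chosen to satisfy $C'(\sfrac{1}{6})$ over the relevant free product while forcing, at a prescribed lacunary sequence of scales, subgroups that are free abelian of controlled rank (in particular a copy of $\mathbb{Z}^{k}$) but severely distorted in $G$, long words in their generators being drastically shortened by the relations; the distortion rate is bounded, and tuned by that rank and by the lacunarity, when the corresponding target is finite, and unbounded when it is $\infty$. Since the $A_{i}$ are finitely generated and finite in number and $C'(\sfrac{1}{6})$ small cancellation over a free product preserves the factors, $G$ is finitely generated, and with all data chosen computably it is recursively presented. The subgroup $H\leqslant G$ is taken to be a finitely generated subgroup assembled from these distorted free-abelian pieces, so that its inclusion into $G$ is a coarse embedding that is \emph{not} a quasi-isometric embedding, with distortion tuned to $n$.

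For the upper bounds, the Cayley complex of $G$ decomposes coarsely into the embedded copies of the $A_{i}$ together with a relator skeleton that is quasi-isometric to, or carries a geometric action by, a genuine $C'(\sfrac{1}{6})$ group, hence of Assouad--Nagata dimension at most $2$ by \cref{IntroThm2}. Feeding this into Hurewicz-type combination theorems --- the plain one for $\asdim$, and the uniform, scale-controlled one for $\asdimAN$ (which is exactly why an Assouad--Nagata, not merely asymptotic, statement is needed from \cref{IntroThm2}) --- over the (relatively) hyperbolic complexes on which the groups act with the $A_{i}$ as stabilizers, applied level by level and passed to the direct limit, yields upper bounds $\asdim(G)\le k$, $\asdimAN(G)\le m$ and $\asdimAN(H)\le n$ once the dimensions of the $A_{i}$ and the growth rate of the tower are chosen to make the arithmetic come out right. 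For the lower bounds: the distorted copy of $\mathbb{Z}^{k}$ inside $G$ is still a coarse embedding, so $\asdim(G)\ge k$, whence $\asdim(G)=k$; the lacunary distorted subgroups produce, at an unbounded sequence of scales $R_{t}$, obstructions that no cover of $G$ by $m$ colours with uniformly linear Lebesgue number can resolve --- obstructions of dimension $m$ when $m<\infty$, of unbounded dimension when $m=\infty$ --- pinning $\asdimAN(G)=m$; and reading the analogous obstructions in the \emph{intrinsic} word metric of $H$, where the scales reparametrise and the obstructions stack further --- legitimately, since $\asdimAN$ is monotone under quasi-isometries but not under arbitrary coarse embeddings --- pins $\asdimAN(H)=n$.

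The main obstacle is the lower bounds realized \emph{exactly}: not merely producing some gap between $\asdim$ and $\asdimAN$, and between $\asdimAN(H)$ and $\asdimAN(G)$, but tuning the distortion functions --- equivalently, the lacunary growth of the relator lengths at each level of the tower --- so that $\asdimAN(G)$ and $\asdimAN(H)$ land on the prescribed values rather than on a neighbouring one, all while keeping the relator sets in the $C'(\sfrac{1}{6})$ regime and recursively enumerable. A secondary, more technical difficulty is to make the coarse decomposition of the Cayley complex of $G$ quantitative enough to drive the uniform combination theorem for $\asdimAN$: this is precisely the step where \cref{IntroThm2} is indispensable, since without an Assouad--Nagata (as opposed to merely asymptotic) bound on the $C'(\sfrac{1}{6})$ skeleton the construction would control $\asdim(G)$ but not $\asdimAN(G)$.
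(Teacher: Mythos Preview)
The paper you were given does not contain a proof of this theorem. \Cref{IntroThm1} is stated in the introduction as a result to be proved in a \emph{companion} paper; the present paper only proves \cref{IntroThm2} (the bound $\asdimAN\le 2$ for finitely generated $C'(\sfrac{1}{6})$ groups) and explicitly says that \cref{IntroThm1} will be derived from it elsewhere. The only hint the paper gives about the construction is that it ``use[s] several auxiliary $C'(\sfrac{1}{6})$ groups, whose presentations must be infinite for the construction to work.'' There is therefore nothing in this paper to compare your proposal against.

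That said, as a standalone sketch your proposal is a strategy, not a proof, and you are candid about this: you identify the exact realisation of the lower bounds as ``the main obstacle'' without indicating how to overcome it. Several of the steps you outline are genuinely delicate and not just bookkeeping. In particular: (i) producing a finitely generated group whose $\asdim$ and $\asdimAN$ differ by a \emph{prescribed} finite amount is the whole content of one of the open questions the theorem is meant to resolve, so ``tune the distortion'' is hiding the entire argument; (ii) your upper-bound mechanism leans on a ``coarse decomposition of the Cayley complex'' into peripheral pieces and a $C'(\sfrac{1}{6})$ skeleton, together with a Hurewicz/combination theorem for $\asdimAN$ applied ``level by level and passed to the direct limit,'' but $\asdimAN$ is not in general preserved under direct limits (the paper itself cites Osajda's example where a direct limit of groups with $\asdim=2$ has infinite asymptotic dimension), so this step requires a specific argument you have not supplied; and (iii) arranging that $H\leqslant G$ has strictly larger $\asdimAN$ than $G$ in its intrinsic metric, while simultaneously controlling $\asdimAN(G)$, is again precisely the second open question being answered, and ``reading the analogous obstructions in the intrinsic word metric of $H$'' does not explain why those obstructions do not already live in $G$. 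The broad architecture---infinitely presented small cancellation groups, distorted abelian subgroups, \cref{IntroThm2} to cap the skeleton's contribution---is plausible and consistent with the paper's hint, but the proposal as written does not close any of the gaps it names.
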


\cref{IntroThm1} simultaneously provides the first example of a group with finite asymptotic dimension and finite but greater Assouad-Nagata dimension, as well as the first example of a finitely generated group with a finitely generated subgroup of greater Assouad-Nagata dimension.  Thus we answer two open questions in asymptotic dimension theory (Question (2) of \cite{Higes2} and Questions 8.6-8.7 of \cite{Brodskiy_etal}, respectively).

The importance of \cref{IntroThm2} is that it applies to \textit{infinitely} presented $C'(\sfrac{1}{6})$ groups.  Indeed, in constructing a group satisfying the conclusion of \cref{IntroThm1}, we use several auxiliary $C'(\sfrac{1}{6})$ groups, whose presentations must be infinite for the construction to work.

In the finitely presented case, we have a satisfying classification: since a finitely presented group has asymptotic dimension 1 if and only if it is virtually free \cite{Fujiwara_Whyte, Gentimis}, \cref{IntroThm2} implies that the Assouad-Nagata dimension of a finitely presented $C'(\sfrac{1}{6})$ group is 1 if the group is virtually free, and 2 otherwise.  However, the finitely presented case of \cref{IntroThm2} was likely already known to experts.  Although apparently not in the literature, a MathOverflow post by Agol \cite{Agol} shows how to obtain that $\asdim(G) \leq 2$ for $G$ a finitely presented $C'(\sfrac{1}{6})$ group, using a theorem of Buyalo and Lebedeva that $\asdim(G) = \dim(\bd G)+1$ when $G$ is hyperbolic \cite{Buyalo_Lebedeva}.  One might then wish to derive \cref{IntroThm2} for infinitely presented groups using the same result for finitely presented groups, but this approach cannot work in general.  This is because in \cite{Osajda}, Osajda constructs a sequence of groups and surjective homomorphisms $G_0 \to G_1 \to G_2 \to \cdots$ such that $\asdim(G_n)=2$ for all $n \in \mathbb N$, but the inductive limit of the sequence has infinite asymptotic dimension.

A technique in many proofs relating hyperbolicity and finiteness of asymptotic dimension (see for example \cite{Roe, Bell_Fujiwara, Bowditch, Osin}) is the ``tight geodesics" property, introduced by Bowditch in  \cite{Bowditch} to study the curve graph of a surface of positive complexity.  In this paper we use a similar technique.  Although infinitely presented $C'(\sfrac{1}{6})$ groups are not hyperbolic, they are ``hyperbolic enough" to be susceptible to a kind of tight geodesics argument.  This stems from the fact that geodesic triangles in $C'(\sfrac{1}{6})$ groups have a limited number of specific forms, a result due to Strebel \cite{Strebel}.  Our proof appears to be the first application of a tight geodesics argument in a non-hyperbolic setting.

The paper is organized as follows.  In \cref{asdimPrelims} we review the definitions of asymptotic dimension and asymptotic Assouad-Nagata dimension, and give a version of the Hurewicz mapping theorem for asymptotic Assouad-Nagata dimension used in the next section.  In \cref{Tight}, we introduce the notion of an $(\varepsilon, k)$-tight geodesic combing for $\varepsilon >0$ and $k \in \mathbb N$, and show that a geodesic metric space admitting a $(\varepsilon, k)$-tight geodesic combing for some $\varepsilon >0$ has asymptotic Assouad-Nagata dimension at most $k$.  In \cref{vKDPrelims} we give some preliminaries on van Kampen diagrams and the classical small cancellation condition $C'(\sfrac{1}{6})$.  We also review the classification of van Kampen diagrams over simple geodesic triangles in $C'(\sfrac{1}{6})$ groups due to Strebel, the essential tool needed in the proof of \cref{IntroThm2}.  In \cref{C'asdimAN} we use Strebel's classification to prove that $C'(\sfrac{1}{6})$ groups admit a $(\sfrac{1}{9}, 2)$-tight geodesic combing, and thus have Assouad-Nagata dimension at most 2.

\section{Preliminaries on asymptotic dimension and asymptotic Assouad-Nagata dimension}\label{asdimPrelims} 

In this paper, $0 \in \mathbb N$.  The set of positive integers is $\mathbb Z^+$.  The set of positive real numbers is denoted $\mathbb R^+$, and the set of  non-negative real numbers is $\mathbb R^+_0$.  The letter $d$ always stands for a metric, on whatever set makes sense in context.

Let $X$ be a metric space.  The open ball of radius $r>0$ about a point $x \in X$ is denoted $B(x,r)$.  If $A, B \subseteq X$, then $d(A, B)$ is defined to be $\inf\{d(a,b) \mid a \in A, b \in B\}$, and we write $d(a,B)$ for $d(\{a\}, B)$.  We define $\diam(A) = \sup\{d(a,a') \mid a, a' \in A\}$.

For $D>0$ and $V \subseteq X$, we say that $V$ is $D$-bounded if $\diam(V) \leq D$.  A family $\mathcal V$ of subsets of $X$ is \emph{uniformly bounded by $D$} or \emph{uniformly $D$-bounded} if $\diam(V) \leq D$ for all $V \in \mathcal V$.     For $r>0$, the \emph{$r$-multiplicity} of $\mathcal V$ is the maximum, over all $x \in X$, of the number of elements of $\mathcal V$ having nonempty intersection with $B(x,r)$, if there is a finite maximum: otherwise, we write that the $r$-multiplicity of $\mathcal V$ is $\infty$. 

\begin{definition}\label{asdimDisjointDef}
Let $X$ be a metric space, $n\in \mathbb N$.  The \emph{asymptotic dimension} of $X$ is at most $n$, written $\asdim(X) \leq n$, if for every $r > 0$, there exists an $D(r)>0$ and a cover $\mathcal V$ of $X$ such that $\mathcal V$ has $r$-multiplicity at most $n+1$ and is uniformly bounded by $D(r)$.  The asymptotic dimension of $X$, denoted $\asdim(X)$, is the least $n \in \mathbb N$ such that $\asdim(X) \leq n$, if such an $n$ exists.  Otherwise, we say that $X$ has infinite asymptotic dimension and write $\asdim(X) = \infty$.
\end{definition}

The function $D: \mathbb R^+ \to \mathbb R^+$ is called an \emph{$n$-dimensional control function} for $X$.  We assume without loss of generality that any $n$-dimensional control function is nondecreasing.  Asymptotic Assouad-Nagata dimension is a version of asymptotic dimension in which the control function is required to be linear.

\begin{definition}\label{asdimANDef} \cite{Brodskiy_etal}
Let $X$ be a metric space, $n \in \mathbb N$.  Then the \emph{asymptotic Assouad-Nagata dimension} of $X$ is at most $n$, written $\asdimAN(X) \leq n$, if there exist $a,b >0$ such that $D(r) = ar+b$ is an $n$-dimensional control function for $X$.  The \emph{asymptotic Assouad-Nagata} dimension of $X$, denoted $\asdimAN(X)$, is defined to be the least $n \in \mathbb N$ such that $\asdimAN(X) \leq n$, or $\infty$ if no such $n$ exists.
\end{definition}

It is easy to verify that both asymptotic dimension and asymptotic Assouad-Nagata dimension are invariant under quasi-isometry.  Therefore for a finitely generated group $G$ we define $\asdim(G)$ ($\asdimAN(G)$) to be the asymptotic (Assouad-Nagata) dimension of $G$ equipped with the word metric with respect to any finite generating set.  The proof of the main result uses the Hurewicz mapping theorem for asymptotic Assouad-Nagata dimension.  In order to state it, we must  state definitions extending the notion of a control function to maps between metric spaces.

\begin{definition}\cite{Brodskiy_etal}
\label{fControlDef}
Let $X, Y$ be metric spaces, $f:X \to Y$, and $n \in \mathbb N$.  Then $D_f : \mathbb R^+ \times \mathbb R^+ \to \mathbb R^+$ is an \emph{$n$-dimensional control function} for $f$ if for all $s, K > 0$ and $A \subseteq X$, if $f(A)$ is $K$-bounded then there exists a cover $\mathcal V$ of $A$ such that $\mathcal V$ has $s$-multiplicity at most $n+1$ and is uniformly bounded by $D_f(s, K)$.  We say that $\asdimAN(f) \leq n$ if there exist constants $a,b,c > 0$ such that $D_f(s,K) = as+bK+c$ is an $n$-dimensional control function for $f$.
\end{definition}

\begin{definition}
Let $X,Y$ be metric spaces.  A function $f:X \to Y$ is \emph{asymptotically Lipschitz} if there exist constants $a,b > 0$ such that $d(f(x),f(x')) \leq a(d(x,x'))+b$ for all $x,x' \in X$.
\end{definition}

The following result is known as the Hurewicz mapping theorem for asymptotic Assouad-Nagata dimension.

\begin{theorem}
Let $f:X \to Y$ be an asymptotically Lipschitz map between metric spaces.  Then $\asdimAN(X) \leq \asdimAN(f) + \asdimAN(Y)$.
\end{theorem}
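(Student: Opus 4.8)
The plan is to reduce to the ``colored'' formulation of dimension and then pull a good cover of $Y$ back through $f$. Recall the standard reformulations: $\asdim(Z)\le \ell$ iff for every $r>0$ there exist uniformly bounded families $\mathcal W_0,\dots,\mathcal W_\ell$ of subsets of $Z$ whose union covers $Z$ and each of which is $r$\emph{-disjoint} (distinct members are more than $r$ apart), and $\asdimAN(Z)\le\ell$ iff one may in addition take the uniform bound to be an affine function of $r$; likewise $\asdimAN(f)\le m$ iff whenever $f(A)$ is $K$-bounded, $A$ is covered by $m+1$ $s$-disjoint families uniformly bounded by an affine function of $s$ and $K$. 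Write $m=\asdimAN(f)$ and $n=\asdimAN(Y)$. The goal is, for each $r>0$, to produce a cover of $X$ by $m+n+1$ $r$-disjoint families that is uniformly bounded by an affine function of $r$.

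First I would set up the pullback. Fix $r>0$, let $a,b$ be asymptotic-Lipschitz constants for $f$, and choose $S$ affine in $r$ (e.g.\ $S=ar+b+1$). Apply $\asdimAN(Y)\le n$ at scale $S$ to get $S$-disjoint families $\mathcal U_0,\dots,\mathcal U_n$ covering $Y$, uniformly bounded by some $D$ affine in $S$, hence in $r$. Asymptotic Lipschitzness gives the crucial separation: for fixed $i$ and distinct $U,U'\in\mathcal U_i$ one has $d(f^{-1}(U),f^{-1}(U'))\ge (S-b)/a>r$. Now apply $\asdimAN(f)\le m$ to each $f^{-1}(U)$ (its image lies in the $D$-bounded set $U$) at scale $r$, obtaining $m+1$ $r$-disjoint families refining $f^{-1}(U)$, each uniformly bounded by a quantity affine in $r$ and $D$, hence affine in $r$. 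For $0\le i\le n$ and $0\le j\le m$, let $\mathcal V_{i,j}$ be the union over $U\in\mathcal U_i$ of the $j$-th such family; since the sets $f^{-1}(U)$ with $U\in\mathcal U_i$ are more than $r$ apart, $\mathcal V_{i,j}$ remains $r$-disjoint and uniformly bounded by an affine function of $r$, and the $\mathcal V_{i,j}$ together cover $X$.

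The main obstacle is that this only exhibits $X$ as covered by $(m+1)(n+1)$ colored families, whereas we need $m+n+1$; merging them is exactly the combinatorial heart of Hurewicz-type theorems and does not follow from the pullback alone. I would carry out the merge by induction on $n$, following the proof of the Hurewicz theorem for asymptotic dimension while tracking linearity. The base case $n=0$ is the construction above, which already uses only $m+1$ families. For the inductive step one peels off the band $\mathcal U_0$: the set $f^{-1}(\bigcup\mathcal U_0)$ is covered by $m+1$ colored families as above, while the preimage of the complement in $Y$ of a large neighborhood of $\bigcup\mathcal U_0$ maps to a subset that behaves like an $(n-1)$-dimensional space at the scales in play, so by the inductive hypothesis its preimage is covered by $m+n$ colored families. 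The delicate point is to combine these two pieces so that the multiplicities do not simply add; this is done by a \emph{saturation} argument: one slightly enlarges the already-built pieces so that they cover a full neighborhood, restricts the remaining pieces to the complement of an even larger neighborhood, and exploits the intervening collar — of width greater than the diameter of an $r$-ball — to guarantee that no $r$-ball meets pieces of both systems, so the total number of colors stays at $(m+1)+n=m+n+1$. This step, essentially the one that powers the finite union theorem for asymptotic dimension, is where I expect the real work to lie.

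Finally one verifies that linearity survives the whole argument: every scale conversion above ($r\mapsto S$, $S\mapsto D$, $(r,D)\mapsto$ refinement bound, and the finitely many enlargements in the saturation step) is affine, and $\asdimAN(f)$ and $\asdimAN(Y)$ furnish genuinely affine control functions, so the finitely many compositions and sums involved again produce an affine function. Hence the cover of $X$ built at scale $r$ is uniformly bounded by an affine function of $r$, giving $\asdimAN(X)\le m+n=\asdimAN(f)+\asdimAN(Y)$.
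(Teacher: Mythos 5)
First, a point of reference: the paper does not prove this theorem at all. It is quoted as a known result from Brodskiy--Dydak--Levin--Mitra (\cite{Brodskiy_etal}), whose definitions of $\asdimAN(f)$ the paper adopts verbatim. So there is no in-paper proof to compare against; your attempt has to be judged against the actual content of the theorem.

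Your first stage (pull back an $S$-disjoint $(n+1)$-colored cover of $Y$, use asymptotic Lipschitzness to make the preimages of same-colored pieces $r$-disjoint, and refine each $f^{-1}(U)$ using $\asdimAN(f)\le m$) is correct and standard, and it does yield $(m+1)(n+1)$ families with affine bounds. But, as you yourself say, the entire content of the theorem is the passage from $(m+1)(n+1)$ colors to $m+n+1$, and that step is not carried out; worse, the route you sketch for it would not close. Two specific problems. First, the claim that the complement in $Y$ of a neighborhood of $\bigcup\mathcal U_0$ ``behaves like an $(n-1)$-dimensional space'' is unjustified and in general false: deleting one color class of an $n$-dimensional cover does not lower the dimension of what remains (the color classes in the decomposition-type statements for $\asdim$ and $\asdimAN$ depend on the scale $r$; there is no fixed subset you can peel off whose complement has smaller dimension). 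Second, the saturation-and-collar mechanism you invoke --- enlarging one system, restricting the other to the complement of a larger neighborhood, and using a collar wider than $r$ so no $r$-ball meets both --- is exactly the mechanism of the finite union theorem, and what it produces is a cover whose multiplicity is the \emph{maximum} of the multiplicities of the two systems, not their sum arranged into $m+n+1$ colors. (If it did work the way you describe, the same argument run to completion would prove $\asdimAN(X)\le\asdimAN(f)$, which is false already for the projection $\mathbb R^2\to\mathbb R$.) The genuine proof in \cite{Brodskiy_etal} has to interleave the scales and colors of the base and fiber covers simultaneously, with the linearity of all control functions tracked through that combinatorial construction; none of that is present here. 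So the proposal identifies the right obstacle but does not overcome it, and the theorem remains unproved by your argument.
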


\section{Tight geodesic combings}\label{Tight}

Let $X$ be a metric space.  A subspace $Y \subseteq X$ is called \emph{cobounded} if there exists a constant $c>0$ such that $d(x, Y) \leq c$ for all $x \in X$.

\begin{definition}\label{geodCombingDef}
Let $X$ be a geodesic metric space with base point $x \in X$.  Then a \emph{geodesic combing} of the pointed metric space $(X,x)$ is a set $T = \{T_y \mid y \in Y\}$, where $Y$ is a cobounded subset of $X$ and $T_y$ is a geodesic from $x$ to $y$ for each $y \in Y$.
\end{definition}

Whenever $\Gamma$ is a connected graph, directed or otherwise, we assume that any edge of $\Gamma$ may be traversed contrary to its orientation, and that $\Gamma$ is equipped with the combinatorial metric, so that $\Gamma$ is naturally a geodesic metric space.

\begin{example}\label{GeodSpanningTree}
Suppose that $\Gamma$ is a connected graph equipped with the combinatorial metric, and let $x \in V(\Gamma)$ be a base point.  A geodesic tree rooted at $x$ is a subgraph $T$ of $\Gamma$ such that $T$ is a tree, and for all $y \in V(\Gamma)$, the unique path from $x$ to $y$ in $T$ is geodesic in $\Gamma$.  If $T$ is a geodesic tree rooted at $x$ and $V(T)=V(\Gamma)$, then we call $T$ a geodesic spanning tree rooted at $x$.  If $T$ is a geodesic spanning tree rooted at $x$ and $y \in V(\Gamma)$, let $[x,y]$ be the path from $x$ to $y$ in $T$.  Then $\{[x,y] \mid y \in V(\Gamma)\}$ is a geodesic combing of $(\Gamma, x)$.
\end{example}

Suppose that $\{T_y \mid y \in Y\}$ is a geodesic combing of a pointed geodesic metric space $(X,x)$.  For each $y \in Y$ and $s>0$, let
$$T(y,s) = \bigcup \{T_{y'} \mid y' \in Y \cap B(y,s)\}$$
and for each $t \geq 0$, let
$$S(t) = \{x' \in X \mid d(x,x') = t\}$$
be the sphere of radius $t$ centered at $x$ in $X$.
 
\begin{definition}
Let $(X,x)$ be a pointed geodesic metric space, $Y$ a cobounded subset of $X$, and $T = \{T_y \mid y \in Y\}$ a geodesic combing of $(X,x)$. Let $\varepsilon > 0$ and $k \in \mathbb N$.  Then we say that $T$ is \emph{$(\varepsilon, k)$-tight} if for all $r>0$, $y \in Y$, and $t \leq d(x,y)-r$, we have $|T(y, \varepsilon r) \cap S(t)| \leq k$.
\end{definition}

Figure \ref{TightGeodesicCombingFigure} illustrates this definition.

\begin{figure}[h!]
\centering
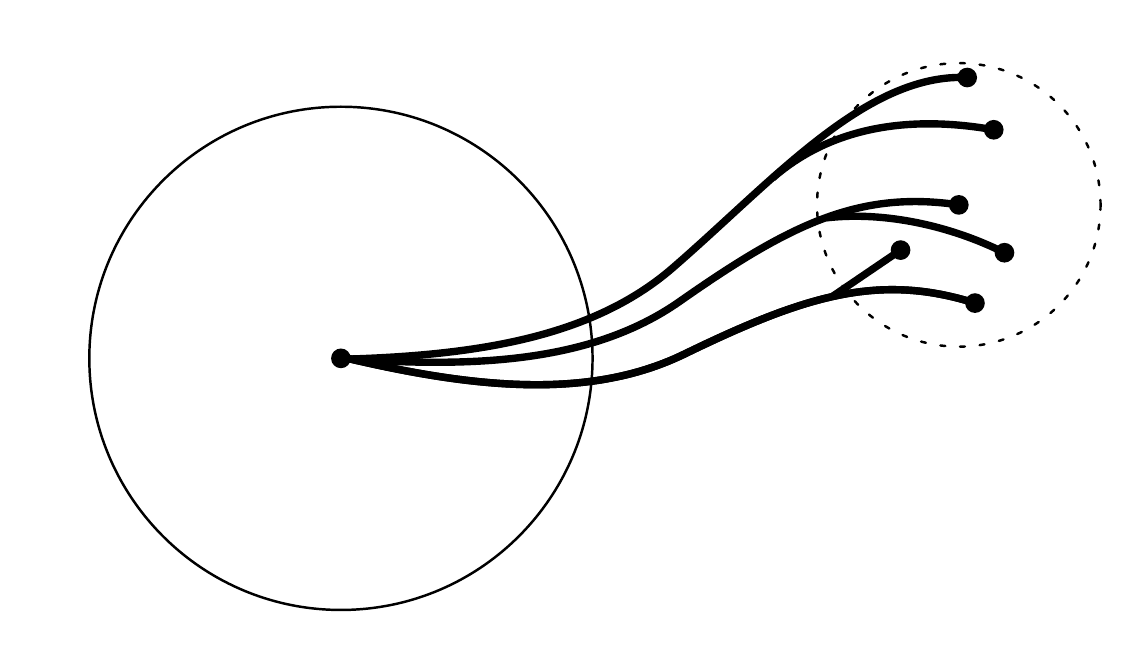
\caption{An $(\varepsilon, k)$-tight geodesic combing.}
\label{TightGeodesicCombingFigure}
\end{figure}

\begin{proposition}\label{TightCombing}
Let $(X,x)$ be a pointed geodesic metric space.  If $X$ admits an $(\varepsilon, k)$-tight geodesic combing for some $\varepsilon > 0$, then $\asdimAN(X) \leq k$.
\end{proposition}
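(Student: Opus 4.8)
The plan is to deduce this from the Hurewicz mapping theorem applied to the distance function $f\colon X\to\mathbb R^+_0$, $f(z)=d(x,z)$. This $f$ is $1$-Lipschitz, hence asymptotically Lipschitz, and $\asdimAN(\mathbb R^+_0)=1$, so the Hurewicz theorem reduces the statement to showing $\asdimAN(f)\le k-1$. Unwinding \cref{fControlDef}, this amounts to the following: for all $s,K>0$ and every $A\subseteq X$ whose image $f(A)$ lies in some interval $[t_0,t_0+K]$, one must produce a cover of $A$ with $s$-multiplicity at most $k$ whose members have diameter bounded by a fixed affine function of $s$ and $K$. (If $X$ is bounded, or if $k=0$ — in which case tightness together with coboundedness forces $X$ bounded — then $\asdimAN(X)=0$ and there is nothing to prove, so assume $k\ge 1$.)

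Fix such an $A$, let $T=\{T_y\mid y\in Y\}$ be an $(\varepsilon,k)$-tight geodesic combing of $(X,x)$, and let $c>0$ be a coboundedness constant for $Y$. The idea is to push each $z\in A$ onto a nearby combing geodesic, follow that geodesic back to a single fixed small sphere about $x$, and group the points of $A$ according to which point of that sphere they hit. Precisely, set $r:=(2s+2c+1)/\varepsilon$ and $t_*:=t_0-r-c$. If $t_*<0$, then $A\subseteq B(x,t_0+K)$ has diameter at most $2(r+c+K)$, affine in $s$ and $K$, and $\{A\}$ is the desired cover; so assume $t_*\ge 0$. For each $z\in A$ choose $y(z)\in Y$ with $d(z,y(z))\le c$. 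Then $d(x,y(z))\ge t_0-c=t_*+r>t_*$, so $T_{y(z)}$ meets $S(t_*)$ in exactly one point $p(z)$, and we put $W_p:=\{z\in A\mid p(z)=p\}$ for $p\in S(t_*)$. These sets cover $A$.

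Boundedness of the $W_p$ is a one-line estimate: for $z\in W_p$, the subpath of $T_{y(z)}$ from $p$ to $y(z)$ has length $d(x,y(z))-t_*\le(t_0+K+c)-(t_0-r-c)=K+r+2c$, so $d(z,p)\le K+r+3c$ and $\diam(W_p)\le 2(K+r+3c)$, which is affine in $s$ and $K$ since $r$ is. The $s$-multiplicity bound is the step with real content, and is exactly where $(\varepsilon,k)$-tightness enters. Fix $z_0\in X$; if $B(z_0,s)\cap A=\emptyset$ there is nothing to check, so fix $z_1\in B(z_0,s)\cap A$. Suppose $z\in W_p\cap B(z_0,s)$. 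Then $d(z,z_1)<2s$, so $d(y(z),y(z_1))\le c+2s+c<\varepsilon r$, hence $y(z)\in Y\cap B(y(z_1),\varepsilon r)$ and therefore $T_{y(z)}\subseteq T(y(z_1),\varepsilon r)$; in particular $p=p(z)\in T(y(z_1),\varepsilon r)\cap S(t_*)$. Since $d(x,y(z_1))\ge d(x,z_1)-c\ge t_0-c=t_*+r$, i.e.\ $t_*\le d(x,y(z_1))-r$, tightness yields $|T(y(z_1),\varepsilon r)\cap S(t_*)|\le k$. Thus every $p$ with $W_p\cap B(z_0,s)\ne\emptyset$ lies in one fixed set of at most $k$ points, so $B(z_0,s)$ meets at most $k$ of the $W_p$. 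This gives the required cover, hence $\asdimAN(f)\le k-1$ and $\asdimAN(X)\le k$.

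The main — indeed essentially the only — obstacle I anticipate is the bookkeeping that sets up the single application of tightness: one has to choose $r$ and $t_*$ so that the hypothesis $t\le d(x,y)-r$ in the definition of tightness holds uniformly for every combing geodesic $T_{y(z)}$ relevant to a given ball $B(z_0,s)$, while simultaneously keeping $\diam(W_p)$ affine in $s$ and $K$. Once the constants are arranged, the geometry reduces to one invocation of the $k$-point bound.
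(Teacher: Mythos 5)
Your proof is correct and follows essentially the same route as the paper's: apply the Hurewicz mapping theorem to the distance-to-basepoint function, and cover the preimage of an interval by grouping points according to which point of a fixed lower sphere their combing geodesics cross, with $(\varepsilon,k)$-tightness giving the multiplicity bound $k$. The only (cosmetic) difference is that you work with $f$ on all of $X$ by projecting to $Y$ via coboundedness, whereas the paper restricts to $d_x\colon Y\to\mathbb R_0^+$ and invokes the quasi-isometry $Y\simeq X$ at the end.
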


\begin{proof}  Suppose that $Y$ is a cobounded subset of $X$ and $T = \{T_y \mid y \in Y\}$ is a $(\varepsilon, k)$-tight geodesic combing of $(X,x)$.  Let $d_x: Y \to \mathbb R_0^+$ be defined by $d_x(y) = d(x,y)$.  For any $n \in \mathbb N$ and $r > 0$, let
$$A(n,r) = \{y \in Y \mid nr \leq d(x,y) \leq (n+2)r\} = d_x^{-1}([nr, (n+2)r])$$
be the $n^\text{th}$ annulus of width $2r$ in $Y$.  

We claim that for each $n \in \mathbb N$ and $r > 0$, there exists a cover $\mathcal V(n,r)$ of $A(n,r)$ which has $\varepsilon r$-multiplicity at most $k$ and is uniformly bounded by $6r$.  To see this, define an equivalence relation $\sim$ on $A(n,r)$ by declaring that $y \sim y'$ if $T_y$ and $T_{y'}$ pass through the same element of $S((n-1)r)$.  Let $\mathcal V(n,r)$ be the set of $\sim$ equivalence classes.  Clearly $y \sim y'$ implies that there is a path in $T_y \cup T_{y'}$ from $y$ to $y'$ of length at most $6r$, hence $\mathcal V(n,r)$ is uniformly $6r$-bounded.  Furthermore, since $T$ is $(\varepsilon, k)$-tight, for each $y \in A(n,r)$ we have that $|T(y, \varepsilon r) \cap S((n-1)r)| \leq k$, hence any open ball of radius $\varepsilon r$ in $A(n,r)$ can meet at most $k$ equivalence classes.

Now we claim that $\asdimAN(d_x) \leq k-1$.  Let $s, K > 0$ be given.  Now fix $r = \max(\frac{1}{\varepsilon}s, K)$.  Let $A \subseteq Y$ be such that $d_x(A)$ is $K$-bounded.  Then $A \subseteq A(n,K) \subseteq A(n,r)$.  By the previous argument, there exists a cover $\mathcal V(n,r)$ of $A(n,r)$ (and thus of $A$) with $\varepsilon r$-multiplicity at most $k$, which is uniformly bounded by $6r$.  Therefore $\mathcal V(n,r)$ has $s$-multiplicity at most $k$ and is uniformly bounded by $6r = 6\max(\frac{1}{\varepsilon}s, K) \leq \frac{6}{\varepsilon}s+6K$.  Thus $D_{d_x}(s,K) := \frac{6}{\varepsilon}s +6K$ is a $(k-1)$-dimensional control function for $d_x$ that is linear in both $s$ and $K$, and we have $\asdimAN(d_x) \leq k-1$.

It is easy to check that $\asdimAN(\mathbb R_0^+) \leq 1$, and that $d_x$ is 1-Lipschitz and therefore asymptotically Lipschitz.  Therefore by the Hurewicz mapping theorem for asymptotic Assouad-Nagata dimension,
$$\asdimAN(Y) \leq \asdimAN(d_x) + \asdimAN(\mathbb R_0^+) = (k-1)+1 = k.$$

Since $Y$ is quasi-isometric to $X$, $\asdimAN(X) \leq k$.
\end{proof}

A straightforward application of Zorn's Lemma shows that if $\Gamma$ is a connected graph and $x \in V(\Gamma)$, then $\Gamma$ has a geodesic spanning tree rooted at $x$.  Hence \cref{GeodSpanningTree} shows that every connected graph has a geodesic combing, which may or may not be $(\varepsilon, k)$-tight for some $\varepsilon > 0$ and $k \in \mathbb N$.  Since every quasigeodesic metric space is quasi-isometric to a connected graph, \cref{GeodSpanningTree} is more general than it appears at first glance.

Clearly if $\Gamma$ is a connected graph, $x \in V(\Gamma)$, and $\Gamma$ admits a $(\varepsilon, k)$-tight geodesic combing for some $\varepsilon > 0$ and $k \in \mathbb N$, then we may assume without loss of generality that it is given by a geodesic spanning tree.  If $T$ is a geodesic spanning tree of $\Gamma$ rooted at $x$, we say that $T$ is $(\varepsilon, k)$-tight if the geodesic combing it induces is $(\varepsilon, k)$-tight.  In \cref{C'asdimAN} we show that if $\Gamma$ is the Cayley graph of a finitely generated $C'(\sfrac{1}{6})$ group with respect to any finite generating set, then any geodesic spanning tree of $\Gamma$ is $(\sfrac{1}{9}, 2)$-tight.

\section{Preliminaries on van Kampen diagrams and small cancellation}\label{vKDPrelims}

We assume that the reader is familiar with van Kampen diagrams and the $C'(\lambda)$ condition.  However, in the literature there are myriad definitions of van Kampen diagram, each with subtle differences. In addition, our definition of `piece' (and thus, of the $C'(\lambda)$ condition), though clearly equivalent, is not the way it's usually stated.  This is in order to ensure that certain concepts in the companion paper (namely signed and unsigned face counts) are well defined.  Therefore in \cref{classicalSC} and \cref{vanKampenDiagrams} we fix terminology and notation, for use this paper and its sequel.  In summary: we treat a van Kampen diagram as a plane graph, and we include inessential edges and faces in our definition.  It is assumed that presentations are \emph{not} closed under cyclic shifts and inverses, and a piece is defined, not as a common prefix of two words, but as a common prefix of cyclic shifts of two words or their inverses.  If this summary is enough for the reader, they may choose to skip to \cref{simpleGeodTriangles}, referring to Sections \ref{classicalSC} and \ref{vanKampenDiagrams} should the need arise.

In \cref{simpleGeodTriangles}, we present a classification of van Kampen diagrams over simple geodesic triangles in $C'(\sfrac{1}{6})$ groups.  This result, due to Strebel, is the essential tool used in the proof of the main theorem.  Then we prove some lemmas regarding the geometry of simple geodesic triangles in $C'(\sfrac{1}{6})$ groups that are used repeatedly in \cref{C'asdimAN}.

\subsection{The $C'(\lambda)$ condition}\label{classicalSC}

Let $S$ be a set.  Let $S^{-1}$ be the set of formal inverses of $S$, let $1$ be a new symbol not in $S$, and declare $1^{-1}=1$.  Let
\begin{equation}
\begin{split}
S_1 &= S \cup \{1\}\\
S_\circ &= S \cup S^{-1} \cup \{1\}.
\end{split}
\end{equation}

The length of a word $w$ in the free monoid $S_\circ^*$ is denoted $|w|$.  There is a unique word of length 0 called the \emph{empty word} and denoted $\varepsilon$.  We define $w^0$ to be $\varepsilon$ for any $w \in S_\circ^*$.  A word $w \in S_\circ^*$ is \emph{reduced} if $w$ does not contain a subword of the form $1, ss^{-1},$ or $s^{-1}s$ for any $s \in S$, and \emph{cyclically reduced} if every cylcic shift of $w$ (including $w$ itself) is reduced.  

Let $R$ be a language over the alphabet $S_\circ$, that is, $R \subseteq S_\circ^*$.  Then  $R_*$ denotes the closure of $R$ under taking cyclic shifts and formal inverses of its elements.  We say that $R$ is \emph{reduced} if every element of $R$ is reduced, and \emph{cyclically reduced} if $R_*$ is reduced.  We say that $R$ is \emph{cyclically minimal} if it does not contain two distinct words, one of which is a cyclic shift of the other word or its inverse.  That is, $R$ is cyclically minimal if $R \cap \{r\}_* = \{r\}$ for each $r \in R$. 

A (group) \emph{presentation} is a pair $\langle S \mid R \rangle$, where $S$ is a set and $R \subseteq S_\circ^*$.  The notation $G=\langle S \mid R \rangle$ means that $\langle S \mid R \rangle$ is a presentation and $G \cong F(S)/\llangle R \rrangle$, where $F(S)$ is the free group with basis $S$, and $\llangle R \rrangle$ is the normal closure of $R$ as a subset of $F(S)$.  

Whenever $S$ is a generating set of a group $G$, there is a natural monoid epimorphism from $S_\circ^*$ to $G$ that evaluates a word in $S_\circ^*$ as a product of generators and their inverses, and sends $1$ to the identity element.  If $G$ and $S$ are understood, then for a word $w \in S_\circ^*$, we denote by $\bar w$ the image of $w$ under this epimorphism.  If we are considering multiple groups with generators $S$ but different relations, it helps to include the group in the notation.  Thus if $\bar w = g \in G$, then we may write $w =_G g$.  If $u, v \in S_\circ^*$ we may write $u =_G v$ to mean $\bar u =\bar v$ in $G$.  

If again both $G$ and $S$ are understood, the \emph{word norm} on $G$ with respect to $S$ is denoted $\|\cdot\|$ and defined by
$$\|g\| = \min\{|w| \mid w \in S_\circ^*, w=_G g\} \, .$$
We might also denote the word norm on $G$ with respect to $S$ by $\|\cdot\|_G$ or $\|\cdot\|_S$ if the group or generating set is ambiguous.  A word $w \in S_\circ^*$ is called \emph{geodesic} in $G$ if $|w| = \|\bar w\|$.  If $u,w \in S_\circ^*$, $g \in G$, $w$ is geodesic in $G$, and $w =_G u =_G g$, then $w$ is called a \emph{geodesic representative} of $u$ or of $g$.  If $K,C \geq 0$ are fixed constants, then we say that a word $w \in S_\circ^*$ is \emph{$(K,C)$-quasigeodesic} if $|w| \leq K\|\bar w\| +C$.

Given two words $u, v \in S_\circ^*$, we say that $p$ is a \emph{piece} (of $u$ and of $v$) if there exists $u' \in \{u\}_*, v' \in \{v\}_*$ such that $p$ is a common prefix of $u'$ and $v'$.

\begin{definition}
Let $S$ be a set, $R \subseteq S_\circ^*$ a language, and $\lambda$ a real number with $0 < \lambda < 1$.  Then $R$ satisfies $C'(\lambda)$ if, whenever $u,v \in R$ and $u' \in \{u\}_*, v' \in \{v\}_*$ witness that $p$ is a piece of $u$ and $v$, then either $u'=v'$ or $|p| < \lambda \min(|u|, |v|)$.
\end{definition}

In this case we say that $R$ is a $C'(\lambda)$ language.  If $G$ is a group and $G=\langle S \mid R \rangle$ for some $C'(\lambda)$ language $R$, then $\langle S \mid R \rangle$ is called a $C'(\lambda)$ presentation and $G$ is called a $C'(\lambda)$ group.

\subsection{van Kampen diagrams}\label{vanKampenDiagrams}

Let $\Gamma$ be a connected graph.  By a \emph{path} in $\Gamma$ we mean a combinatorial path, i.e. an alternating sequence of vertices and edges, as opposed to a continuous map from a closed interval.  We allow paths to have repeated edges or vertices: in graph-theoretic terms, our `path' is really a walk.  Points in the interiors of edges generally don't matter to us, so we write $x \in \Gamma$ to mean that $x \in V(\Gamma)$.  Likewise, if $\alpha$ is a path in $\Gamma$, then $x \in \alpha$ means that $x$ is a vertex visited by $\alpha$.  

Let $\Gamma$ be any directed graph, and suppose that $\Lab : E(\Gamma) \to S_1$ (see (1) above) is a function which assigns labels from $S_1$ to the edges of $\Gamma$.  Then we extend $\Lab$ to a map from the set of all paths in $\Gamma$ to $S_\circ^*$ in the following natural way.
\begin{itemize}
\item If $e = (x, y)$ is a directed edge labeled $s$, then $\Lab(x, e, y) = s$ and $\Lab(y,e,x) = s^{-1}$.
\item If $\alpha = (x_0, e_1, x_1, \ldots, x_{n-1}, e_n, x_n)$ is a path, then 
$$\Lab(\alpha) = \Lab(x_0, e_1, x_1) \Lab(x_1, e_2, x_2) \cdots \Lab(x_{n-1}, e_n, x_n).$$ 
\end{itemize}

For a path $\alpha$ we define $\ell(\alpha)$, the length of $\alpha$, to be the number of edges traversed by $\alpha$, counting multiplicity.  Equivalently, $\ell(\alpha) = |\Lab(\alpha)|$.

A \emph{plane graph} is a graph which is topologically embedded in $\mathbb R^2$.  A \emph{face} of a plane graph $M$ is the closure of a connected component of $\mathbb R^2 \smallsetminus M$.  Let $F$ be a face of a finite directed plane graph $M$ with edges labeled by elements of $S_1$.  Choosing a base point $x \in \bd F$ and an orientation counterclockwise $(+)$ or clockwise $(-)$, there is a unique circuit which traverses $\bd F$ exactly once, called the \emph{boundary path} and denoted $(\bd F, x, \pm)$.  If all properties of $(\bd F, x, \pm)$ that we care about are preserved after changing its base point and orientation, then we leave these choices out of the notation and write $\bd F$.  We write $\bd M$ instead of $\bd F$ if $F$ is the unbounded face; from now on, `face' will mean `bounded face' unless otherwise stated. The \emph{boundary label} of $F$ is $\Lab(\bd F, x, \pm)$, sometimes denoted by just $\Lab(\bd F)$.

\begin{definition}
A \emph{van Kampen diagram} over a presentation $\langle S \mid R \rangle$ is a finite, connected, directed plane graph $M$ with edges labeled by elements of $S_1$, such that if $F$ is a face of $M$, then either $\Lab(\bd F) \in R_*$ or $\Lab(\bd F) =_{F(S)} 1$.
\end{definition}

A face $F$ is called \emph{essential} if $\Lab(\bd F) \in R_*$ and \emph{inessential} if $\Lab(\bd F)=_{F(S)} 1$.  If $R$ is cyclically reduced then these cases are mutually exclusive.    A face with boundary label $r \in R$ is called an $r$-face.  An edge is \emph{essential} if it is labeled by an element of $S$, and \emph{inessential} if it is labeled by 1.  We call a van Kampen diagram \emph{bare} if it contains no inessential faces, and \emph{padded} otherwise.  In this paper we will only need to consider bare van Kampen diagrams, although padded van Kampen diagrams will be used extensively in the next.  Generally speaking, one needs to consider inessential edges and faces in order to make precise arguments with van Kampen diagrams.  So, in this paper we give the most general definition of a van Kampen diagram, and note the distinction between bare and padded van Kampen diagrams for future reference.

Let $M$ be a van Kampen diagram, and suppose $F$ and $F'$ are distinct faces of $M$.  Then we say that $F$ \emph{cancels} with $F'$ if there exists an edge $e=(x,y)$ in $\bd F \cap \bd F'$ such that $\Lab(\bd F, x, +) = \Lab(\bd F', x, -)$. Then we have the following geometric interpretation of the $C'(\lambda)$ condition, which follows immediately from the definition.

\begin{lemma}\label{SmallCancellationGeometric}
Let $\langle S \mid R \rangle$ be a presentation where $R$ satisfies $C'(\lambda)$, and let $M$ be a van Kampen diagram over $\langle S \mid R \rangle$.  Suppose that $F, F'$ are essential faces of $M$ and $\alpha$ is a common subpath of $\bd F$ and $\bd F'$.  Then either $F$ and $F'$ cancel, or $\ell(\alpha) < \lambda \min (\ell(\bd F), \ell(\bd F'))$.
\end{lemma}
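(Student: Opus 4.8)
The plan is to reduce the statement directly to the definition of the $C'(\lambda)$ condition by recognizing $\Lab(\alpha)$ as a piece. First I would record the elementary fact that, since $F$ is essential, the boundary labels $\Lab(\bd F, z, \pm)$ taken over all choices of base point $z \in \bd F$ and orientation $\pm$ form exactly a set of the form $\{u\}_*$ for some relator $u \in R$, and that $|u| = \ell(\bd F)$ (because $\Lab(\bd F)$ is a cyclic shift of $u$ or of $u^{-1}$, and $\ell(\bd F) = |\Lab(\bd F)|$). Likewise $\Lab(\bd F', \cdot, \pm)$ ranges over $\{v\}_*$ with $|v| = \ell(\bd F')$. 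If $\ell(\alpha) = 0$ the claimed inequality is immediate since $\ell(\bd F), \ell(\bd F') \geq 1$, so I would assume $\ell(\alpha) \geq 1$.

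Next, let $x$ be the initial vertex of $\alpha$ and let $e$ be its first edge, so that $e$ lies on both $\bd F$ and $\bd F'$. Choosing $x$ as base point and the orientation of $\bd F$ that traverses $e$ in the direction prescribed by $\alpha$, I obtain $\Lab(\bd F, x, \sigma) = u'$ with $u' \in \{u\}_*$ and $\Lab(\alpha)$ a prefix of $u'$; doing the same for $\bd F'$ gives $\Lab(\bd F', x, \sigma') = v'$ with $v' \in \{v\}_*$ and $\Lab(\alpha)$ a prefix of $v'$. Hence $p := \Lab(\alpha)$ is a piece of $u$ and $v$, witnessed by $u'$ and $v'$. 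The $C'(\lambda)$ condition then yields: either $u' = v'$, or $|p| < \lambda \min(|u|,|v|)$.

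In the latter case I am done at once, since $\ell(\alpha) = |p|$, $\ell(\bd F) = |u|$, and $\ell(\bd F') = |v|$. In the former case I must show $F$ and $F'$ cancel. Here I would invoke planarity: because $F \neq F'$ and $e$ is on the boundary of both, $e$ has $F$ on one side and $F'$ on the other, so the counterclockwise boundary walks of $F$ and of $F'$ traverse $e$ in opposite directions. Since $\sigma$ and $\sigma'$ were both chosen so as to traverse $e$ in the direction prescribed by $\alpha$, they must be opposite orientations; as "cancels with" is symmetric in $F$ and $F'$, I may assume $\sigma = +$ and $\sigma' = -$, whence $\Lab(\bd F, x, +) = u' = v' = \Lab(\bd F', x, -)$, which is precisely the statement that $F$ cancels with $F'$.

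The only point requiring any care is the orientation bookkeeping in the last paragraph — the observation that the two boundary orientations, once aligned along $\alpha$, are forced to be opposite because a non-pendant edge separating two faces is traversed oppositely by their counterclockwise boundary circuits. Everything else is a routine translation between words and boundary paths.
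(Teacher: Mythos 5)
Your proof is correct and is exactly the intended argument: the paper offers no proof beyond the remark that the lemma ``follows immediately from the definition,'' and your write-up --- recognizing $\Lab(\alpha)$ as a piece witnessed by the two boundary labels based at the start of $\alpha$, then using planarity to identify the case $u'=v'$ with the definition of cancellation --- is a careful expansion of precisely that reduction. The orientation bookkeeping you flag is indeed the only delicate point, and you handle it correctly.
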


A van Kampen diagram is called \emph{reduced} if no two of its faces cancel.  A van Kampen diagram is \emph{minimal} if, among all van Kampen diagrams with the same boundary label, it minimizes first the number of essential faces, then the number of inessential faces.  If a van Kampen diagram is minimal, then it is bare and reduced \cite{Lyndon_Schupp}.

Whenever $G$ is a group generated by $S$, the Cayley graph of $G$ with respect to $S$ is denoted $\Gamma(G,S)$.

\begin{lemma}[van Kampen Lemma]\cite{Lyndon_Schupp}\label{vKL}
Let $G = \langle S \mid R \rangle$ and $w \in S_\circ^*$.  Then $w=_G 1$ if and only if there exists a van Kampen diagram $M$ over $\langle S \mid R \rangle$ and $x \in \bd M$ such that $\Lab(\bd M, x, +)=w$.  Furthermore, given $g \in G$, there exists a combinatorial map $f: M \to \Gamma(G,S)$ preserving labels and orientations of edges, such that $f(x)=g$.  In particular, $f$ does not increase distances, i.e. is $1$-Lipschitz.
\end{lemma}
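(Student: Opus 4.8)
The plan is to prove the two implications separately, and to obtain the label- and orientation-preserving map $f$ as part of the proof of sufficiency, since that map yields $w =_G 1$ immediately.

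For the ``only if'' direction, suppose $w =_G 1$, so that the element of $F(S)$ represented by $w$ lies in the normal closure $\llangle R \rrangle$. I would write this element as $\prod_{i=1}^{k} g_i^{-1} r_i^{\epsilon_i} g_i$ in $F(S)$ with $r_i \in R$, $\epsilon_i \in \{\pm 1\}$, $g_i \in F(S)$, and carry out the standard ``bouquet of lollipops'' construction: a central vertex $O$ from which emanate $k$ arcs, the $i$th labeled by a reduced word representing $g_i$ and terminating in a loop labeled $r_i^{\epsilon_i}$ bounding a single essential face. Reading $\bd M$ based at $O$ then gives a word freely equal to $w$. To get a diagram whose boundary label is literally $w$ rather than merely freely equal to it, I would observe that free equality is generated by insertions and deletions of the subwords $1$, $ss^{-1}$, $s^{-1}s$, and that each such move is realized on diagrams by attaching or deleting a ``spike'' (a pendant vertex joined by one edge, traversed out and back) at the relevant point of the boundary — a move which creates no face. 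Iterating produces the desired $M$ and basepoint $x \in \bd M$ with $\Lab(\bd M, x, +) = w$.

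For the ``if'' direction together with the furthermore, let $M$ be a van Kampen diagram with $\Lab(\bd M, x, +) = w$ and let $g \in G$. I would first define $f$ on vertices: for $v \in V(M)$ choose a combinatorial path $\alpha$ from $x$ to $v$ in $M$ and set $f(v) = g \cdot \overline{\Lab(\alpha)}$. The crux is independence of $\alpha$: two choices differ by a closed path $\gamma$ at $x$, and since filling in the bounded faces of $M$ yields a simply connected planar complex, $\gamma$ bounds a union of faces of $M$; hence $\Lab(\gamma)$ is, in $F(S)$, a product of conjugates of boundary labels of faces of $M$, each of which lies in $R_*$ or is freely trivial, and so represents the identity of $G$. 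Thus $\overline{\Lab(\gamma)} = 1$ and $f$ is well defined on $V(M)$. Extending over edges is then forced — an essential edge from $u$ to $v$ labeled $s$ maps to the edge of $\Gamma(G,S)$ from $f(u)$ to $f(u) \cdot s = f(v)$ labeled $s$, while an inessential edge collapses to a vertex — so $f$ is combinatorial, preserves labels and orientations, satisfies $f(x) = g$, and does not increase distances. Applying the same well-definedness computation to the closed path $\bd M$ itself gives $\bar w = \overline{\Lab(\bd M, x, +)} = 1$, i.e. $w =_G 1$.

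I expect the main obstacle to be precisely the well-definedness of $f$ on vertices, equivalently the assertion that a closed combinatorial path in $M$ bounds a union of faces of $M$. This is where planarity of $M$ enters essentially, and it must either be imported from the point-set topology of the plane (a compact connected planar $2$-complex with connected complement is simply connected) or proved by an induction on the number of faces in which one repeatedly strips off a face meeting $\bd M$ in an arc, handling separating vertices by splitting $M$ along them. Once that is in hand, the lollipop construction and the spike moves in the converse direction are routine.
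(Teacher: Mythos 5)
The paper does not prove this lemma; it is quoted verbatim from Lyndon--Schupp as background, so there is no internal proof to compare against. Your argument is the standard textbook proof of exactly the kind found in the cited source: lollipops for necessity, and the path-label map $v \mapsto g\cdot\overline{\Lab(\alpha)}$ for sufficiency, with well-definedness reduced to the fact that a null-homotopic combinatorial loop in a simply connected planar complex has label a product of conjugates of face labels. That is the right skeleton and the ``furthermore'' clause falls out of it as you say. One imprecision worth fixing: in passing from the lollipop boundary label to the literal word $w$, you assert that both insertions \emph{and deletions} of $1$, $ss^{-1}$, $s^{-1}s$ are realized by attaching or removing spikes. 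Insertions are (and the paper's inessential edges labeled $1$ handle insertion of the letter $1$), but a deletion of a cancelling pair $ss^{-1}$ from the boundary word is only a spike removal when the two letters come from traversing one pendant edge out and back; in general the two letters are read off two distinct boundary edges, and the move required is a fold (sewing the two edges together, identifying their free endpoints), after which one must check that planarity and the face-label conditions are preserved. The cleaner standard route is to note that the lollipop word and $w$ have a common free reduction $w_0$, fold the lollipop diagram down to boundary label $w_0$, and then build back up to $w$ by insertions alone. With that emendation the proof is complete and is essentially the one the paper is importing.
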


\subsection{Van Kampen diagrams for simple geodesic triangles in $C'(\sfrac{1}{6})$ groups}\label{simpleGeodTriangles}

Let $a,b,c$ be distinct elements of $G=\langle S \mid R \rangle$, and let $[a,b],[b,c],[c,a]$ be fixed geodesics between them in $\Gamma (G,S)$.  Then $[a,b] \cup [b,c] \cup [c,a]$ is called a \textit{geodesic triangle} and denoted $\Delta (a,b,c)$.  We say that $\Delta (a,b,c)$ is a \textit{simple} geodesic triangle if the boundary path $\bd \Delta(a,b,c) := [a,b]*[b,c]*[c,a]$ is a simple closed curve in $\Gamma(G,S)$.  If $\sigma$ is a circuit in $\Gamma(G,S)$ beginning at a group element $g \in G$, we say that $M$ is a van Kampen diagram \emph{for} $\sigma$ if, for some $x \in \bd M$, $\Lab(\bd M, x, +) = \Lab(\sigma)$ and the combinatorial map $f: M \to \Gamma(G,S)$ sends $x$ to $g$.

If $\Gamma$ is a directed graph, the \emph{underlying graph} of $\Gamma$ is the undirected graph obtained by removing the orientation of every edge of $\Gamma$.  If $\Gamma$ is a graph and $e = (x,y)$ is an edge of $\Gamma$, then subdividing $e$ means adding a vertex $z$ and edges $(x,z)$ and $(z,y)$ to $\Gamma$, and removing $e$.  A \emph{subdivision} of $\Gamma$ is a graph obtained from $\Gamma$ by a finite sequence of subdivisions of edges.

\begin{theorem}\cite{Strebel} \label{StrebelsTheorem} Suppose that $G=\langle S \mid R \rangle$, $S$ is finite, $R$ satifies $C'(\sfrac{1}{6})$, $\Delta$ is a simple geodesic triangle in $\Gamma (G,S)$, and $M$ is a minimal van Kampen diagram over $\langle S \mid R \rangle$ for $\bd \Delta$. Then the underlying graph of $M$ is a subdivision of a member of one of the four infinite families of plane graphs depicted in Figure \ref{StrebelsTilings}.
\end{theorem}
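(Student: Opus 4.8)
The plan is to combine the standard small-cancellation machinery for reduced van Kampen diagrams — the combinatorial Gauss--Bonnet (curvature) argument — with the extra rigidity forced by the three sides of $\Delta$ being geodesic in $\Gamma(G,S)$. First I would record the structural consequences of minimality. Since $M$ is minimal it is bare and reduced, and because $\bd\Delta$ is a simple closed curve one checks that, apart from the degenerate cases in which $M$ is a point, an arc, or a single face (each of which appears among the four families), $M$ is homeomorphic to a closed disc: a spur would put a subword of the form $ss^{-1}$ into the geodesic label $\Lab(\bd\Delta)$, and a cut vertex would be visited twice by the simple curve $\bd\Delta$. For reduced $C'(\sfrac16)$ diagrams, \cref{SmallCancellationGeometric} gives the basic local estimate that everything is built on: the boundary of any interior face $F$ of $M$ decomposes into at least $7$ arcs, each shared with a distinct neighbouring face and of length strictly less than $\tfrac16\ell(\bd F)$.

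Next I would extract the geodesic constraints. By \cref{vKL} the combinatorial map $f\colon M\to\Gamma(G,S)$ is $1$-Lipschitz and sends each of the three sides of $\bd M$ onto a geodesic of $\Gamma(G,S)$. Hence for any face $F$ and any subpath $\gamma$ of $\bd F$ lying entirely on a single side of $\bd M$, the complementary subpath $\gamma'$ of $\bd F$ has the same endpoints, so $\ell(\gamma)\le\ell(\gamma')$ and therefore $\ell(\gamma)\le\tfrac12\ell(\bd F)$; more generally the total length of $\bd F\cap\bd M$ is sharply limited as long as it avoids the corners $a,b,c$, and an interior path of $M$ joining two boundary vertices is never shorter than the distance between their images. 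These inequalities — the only place the geodesic hypothesis is used — are precisely what forbids a face from ``hugging'' a side.

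Then comes the curvature bookkeeping. After regularizing $M$ (suppressing valence-$2$ interior vertices and amalgamating consecutive boundary arcs) it becomes a $[3,7]$-type map, and the combinatorial Gauss--Bonnet formula for a disc gives that the total curvature is a fixed positive constant, while interior vertices (degree $\ge 3$) and interior faces (at least $7$ sides, by the first step) contribute non-positive curvature. So all the positive curvature is carried by boundary vertices and boundary faces, and the geodesy inequalities of the previous step force a boundary face to carry positive curvature only if it meets one of the corners $a,b,c$ or has very few interior neighbours. This confines the outer layer of faces of $M$ to a thin, explicitly describable strip along the three sides; peeling the outer layer leaves a diagram of the same restricted type, and running the resulting induction shows that the surviving configurations are exactly the four families of Figure \ref{StrebelsTilings}.

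I expect this last combinatorial analysis — enumerating the admissible local pictures along a side and, above all, controlling how the outer layers emanating from the two endpoints of a side interact near the corner between them so as to rule out every configuration outside the four families — to be the main obstacle. It is the technical heart of Strebel's original argument, which one could alternatively simply quote.
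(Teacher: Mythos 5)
The first thing to note is that the paper does not prove \cref{StrebelsTheorem} at all: it is imported from \cite{Strebel} (Strebel's appendix to Ghys--de la Harpe), and the surrounding text explicitly treats it as the external tool on which the main theorem rests. So there is no in-paper argument to compare yours against; the relevant comparison is with Strebel's original proof, whose skeleton your outline reconstructs accurately. The preliminary reductions are all sound: minimality gives a bare, reduced diagram; simplicity of $\bd\Delta$ rules out spurs and cut vertices, so away from the degenerate cases $M$ is a disc diagram; \cref{SmallCancellationGeometric} forces every interior face of the regularized map to have at least seven sides; the geodesic hypothesis, exactly as in \cref{DehnReduction}, caps the portion of a boundary face's perimeter lying on a single side at one half of the perimeter; and combinatorial Gauss--Bonnet then concentrates all positive curvature on the boundary.

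The gap is that everything after that point --- ``peeling the outer layer \dots shows that the surviving configurations are exactly the four families'' --- is asserted rather than proved, and that assertion \emph{is} the theorem. The generic consequences you derive (no interior faces or vertices, a thin strip of faces along each side) are compatible with many hypothetical configurations besides I--II, III, IV and V; what the classification actually requires is the exhaustive case analysis at the three corners and in the middle of the diagram, pinning down which sequences of triangular, quadrilateral and pentagonal faces can occur and how the three strips can meet (producing, e.g., the single middle face of type III versus the configuration of pentagons in type IV, and excluding everything else). None of that enumeration appears in your sketch, and you acknowledge it as the main obstacle. Since you also correctly observe that one could ``simply quote'' Strebel, the honest conclusion is that your proposal is a plausible roadmap rather than a proof; citing the source, as the paper does, is the appropriate resolution unless you are prepared to carry out the full corner-by-corner classification.
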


\begin{figure}[h!]
\centering
\begingroup%
  \makeatletter%
  \providecommand\color[2][]{%
    \errmessage{(Inkscape) Color is used for the text in Inkscape, but the package 'color.sty' is not loaded}%
    \renewcommand\color[2][]{}%
  }%
  \providecommand\transparent[1]{%
    \errmessage{(Inkscape) Transparency is used (non-zero) for the text in Inkscape, but the package 'transparent.sty' is not loaded}%
    \renewcommand\transparent[1]{}%
  }%
  \providecommand\rotatebox[2]{#2}%
  \newcommand*\fsize{\dimexpr\f@size pt\relax}%
  \newcommand*\lineheight[1]{\fontsize{\fsize}{#1\fsize}\selectfont}%
  \ifx\svgwidth\undefined%
    \setlength{\unitlength}{419.21074677bp}%
    \ifx\svgscale\undefined%
      \relax%
    \else%
      \setlength{\unitlength}{\unitlength * \real{\svgscale}}%
    \fi%
  \else%
    \setlength{\unitlength}{\svgwidth}%
  \fi%
  \global\let\svgwidth\undefined%
  \global\let\svgscale\undefined%
  \makeatother%
  \begin{picture}(1,0.19751402)%
    \lineheight{1}%
    \setlength\tabcolsep{0pt}%
    \put(0,0){\includegraphics[width=\unitlength,page=1]{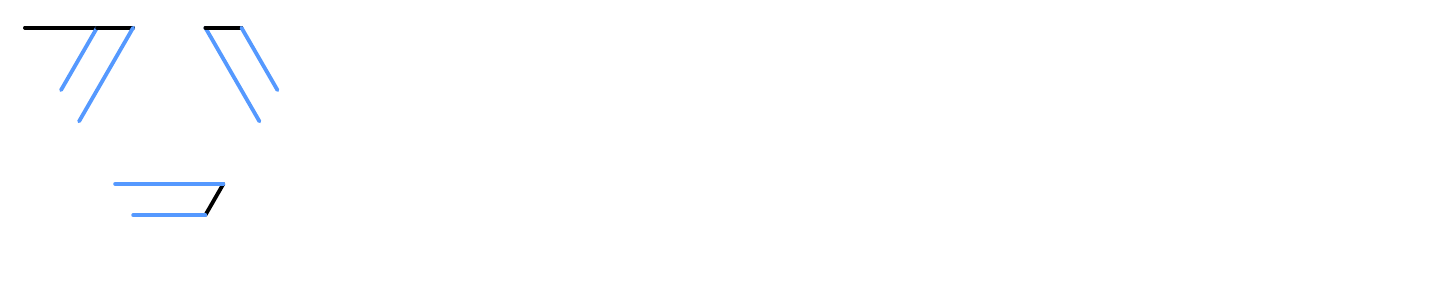}}%
    \put(0.0117492,0.18120532){\color[rgb]{0,0,0}\makebox(0,0)[rt]{\lineheight{1.25}\smash{\begin{tabular}[t]{r}I-II\end{tabular}}}}%
    \put(0,0){\includegraphics[width=\unitlength,page=2]{SCAD1.pdf}}%
    \put(0.27568223,0.18185269){\color[rgb]{0,0,0}\makebox(0,0)[rt]{\lineheight{1.25}\smash{\begin{tabular}[t]{r}III\end{tabular}}}}%
    \put(0,0){\includegraphics[width=\unitlength,page=3]{SCAD1.pdf}}%
    \put(0.53565274,0.18185269){\color[rgb]{0,0,0}\makebox(0,0)[rt]{\lineheight{1.25}\smash{\begin{tabular}[t]{r}IV\end{tabular}}}}%
    \put(0,0){\includegraphics[width=\unitlength,page=4]{SCAD1.pdf}}%
    \put(0.79314413,0.18185269){\color[rgb]{0,0,0}\makebox(0,0)[rt]{\lineheight{1.25}\smash{\begin{tabular}[t]{r}V\end{tabular}}}}%
    \put(0,0){\includegraphics[width=\unitlength,page=5]{SCAD1.pdf}}%
  \end{picture}%
\endgroup%

\caption{types of van Kampen diagrams for a simple geodesic triangle in $\Gamma(G,S)$}
\label{StrebelsTilings}
\end{figure}

In Figure \ref{StrebelsTilings}, the blue edges and dots signify a sequence of parallel edges which may or may not be present.  Vertices are located at the corners and at every juncture of edges.  Our notation is slightly different from Strebel's notation in \cite{Strebel}: our I-II encompasses Strebel's I\textsubscript{2}, I\textsubscript{3} and II, as well as the van Kampen diagram consisting of a single face, and our III is Strebel's III\textsubscript{1}.

For the remainder of this section, suppose that $G$ is a group with presentation $\langle S \mid R \rangle$, $S$ is finite, $R$ satisfies $C'(\sfrac{1}{6})$, $\Delta = \Delta(a,b,c)$ is a simple geodesic triangle in $\Gamma(G,S)$, $M$ is a minimal van Kampen diagram for $\bd \Delta$, $f:M \to \Gamma(G,S)$ is the combinatorial map, $\alpha = [b,c], \beta = [c,a]$, and $\gamma= [a,b]$.  Note that $f|_{\bd M}^\Delta: \bd M \to \Delta$ is bijective, and isometric when restricted to each of the subpaths of $\bd M$ corresponding to $\alpha, \beta,$ or $\gamma$.  Thus without harm we blur the distinction between $\bd M$ and $\bd \Delta$, and refer to vertices, edges, paths etc. in $\bd M$ by their images in $\Delta$.

\begin{figure}[h!]
\centering
\begingroup%
  \makeatletter%
  \providecommand\color[2][]{%
    \errmessage{(Inkscape) Color is used for the text in Inkscape, but the package 'color.sty' is not loaded}%
    \renewcommand\color[2][]{}%
  }%
  \providecommand\transparent[1]{%
    \errmessage{(Inkscape) Transparency is used (non-zero) for the text in Inkscape, but the package 'transparent.sty' is not loaded}%
    \renewcommand\transparent[1]{}%
  }%
  \providecommand\rotatebox[2]{#2}%
  \newcommand*\fsize{\dimexpr\f@size pt\relax}%
  \newcommand*\lineheight[1]{\fontsize{\fsize}{#1\fsize}\selectfont}%
  \ifx\svgwidth\undefined%
    \setlength{\unitlength}{177.1849805bp}%
    \ifx\svgscale\undefined%
      \relax%
    \else%
      \setlength{\unitlength}{\unitlength * \real{\svgscale}}%
    \fi%
  \else%
    \setlength{\unitlength}{\svgwidth}%
  \fi%
  \global\let\svgwidth\undefined%
  \global\let\svgscale\undefined%
  \makeatother%
  \begin{picture}(1,0.62192708)%
    \lineheight{1}%
    \setlength\tabcolsep{0pt}%
    \put(0,0){\includegraphics[width=\unitlength,page=1]{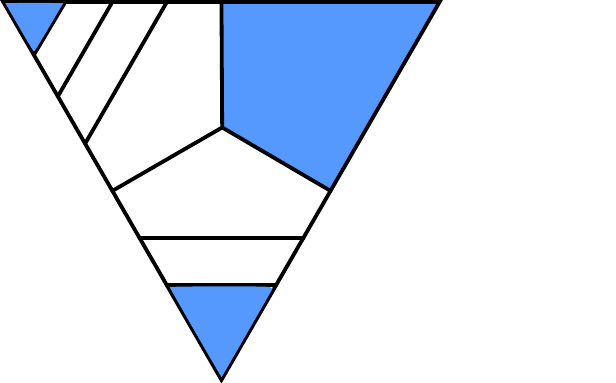}}%
    \put(0.85233751,0.19414751){\color[rgb]{0,0,0}\makebox(0,0)[t]{\lineheight{1.25}\smash{\begin{tabular}[t]{c}extremal face\end{tabular}}}}%
    \put(0,0){\includegraphics[width=\unitlength,page=2]{SCAD9.pdf}}%
    \put(0.35979362,0.27728535){\color[rgb]{0,0,0}\makebox(0,0)[t]{\lineheight{1.25}\smash{\begin{tabular}[t]{c}\small{interior side}\end{tabular}}}}%
    \put(0.87045798,0.39913157){\color[rgb]{0,0,0}\makebox(0,0)[t]{\lineheight{1.25}\smash{\begin{tabular}[t]{c}exterior side\end{tabular}}}}%
    \put(0,0){\includegraphics[width=\unitlength,page=3]{SCAD9.pdf}}%
  \end{picture}%
\endgroup%

\caption{}
\label{typeVExample}
\end{figure}

A face $F$ of $M$ is called \emph{extremal} if $F$ contains $a, b,$ or $c$.  A \emph{side} of $F$ is a maximal subpath of $\bd F$ whose internal vertices all have degree 2 and do not include $a,b,$ or $c$.  A side is called \emph{exterior} if it is contained in $\bd M$, and \emph{interior} otherwise.  An exterior side must be a subpath of $\alpha, \beta$, or $\gamma$, so all exterior sides are geodesic.  We call a face triangular if it has exactly three sides, quadrilateral if it has exactly four sides, etc.  Figure \ref{typeVExample} shows an example of a van Kampen diagram of type V with two triangular faces, four quadrilateral faces, and two pentagonal faces.

Let $i(F)$ denote the number of interior sides of $F$.  The following argument appears so frequently in the proofs that follow that it is worthwhile to section it off as a lemma.

\begin{lemma}\label{DehnReduction}
Let $F$ be a face of $M$ and $\sigma$ an exterior side of $F$.  Then
$$\sum\{ \ell(\tau) \mid \tau \textnormal{ is a side of }F \textnormal{ other than }\sigma\} \geq \tfrac{1}{2}\ell(\bd F).$$
In particular,
$$\sum\{ \ell(\tau) \mid \tau \textnormal{ is an exterior side of }F \textnormal{ other than }\sigma\} > \left(\tfrac{1}{2}-\tfrac{i(F)}{6} \right)\ell(\bd F).$$
\end{lemma}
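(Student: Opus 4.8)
The plan is to exploit the small cancellation condition $C'(\sfrac{1}{6})$ via \cref{SmallCancellationGeometric}, together with the fact that $M$ is minimal (hence reduced and bare). First I would set up notation: write $\ell(\bd F) = \ell(\sigma) + \sum_{\tau \neq \sigma} \ell(\tau)$, where the sum runs over all sides of $F$ other than $\sigma$. The goal of the first displayed inequality is to show $\ell(\sigma) \leq \tfrac12 \ell(\bd F)$, i.e. that the single exterior side $\sigma$ occupies at most half the boundary of $F$. The key observation is that $\sigma$, being a subpath of one of $\alpha$, $\beta$, $\gamma$, is a \emph{geodesic} in $\Gamma(G,S)$, so there is no shorter way around; meanwhile, $\sigma$ itself is a single side of $F$, so it cannot be decomposed further. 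If $\ell(\sigma) > \tfrac12 \ell(\bd F)$, then $\sigma$ alone would be a subpath of $\bd F$ of length exceeding $\tfrac12\ell(\bd F)$. I would then compare $F$ with the ambient structure: the complementary arc of $\bd F$ (the concatenation of all other sides) connects the same two endpoints as $\sigma$, so it has length $\geq \ell(\sigma)$ by geodesity of $\sigma$; combined with $\ell(\sigma) + \ell(\text{complement}) = \ell(\bd F)$ this forces $\ell(\sigma) \leq \tfrac12\ell(\bd F)$, which is exactly the claim. This is the easy half and essentially just uses that $\sigma$ is geodesic.

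For the second ("in particular") inequality, the idea is that among the sides of $F$ other than $\sigma$, at most $i(F)$ of them are interior, and each interior side is short by small cancellation. Precisely, if $\tau$ is an interior side of $F$, then $\tau$ is a common subpath of $\bd F$ and $\bd F'$ for some other essential face $F'$ (since $M$ is bare, every non-boundary edge borders two essential faces, and an interior side lies on such a shared arc). Since $M$ is reduced, $F$ and $F'$ do not cancel, so \cref{SmallCancellationGeometric} gives $\ell(\tau) < \tfrac16 \min(\ell(\bd F), \ell(\bd F')) \leq \tfrac16 \ell(\bd F)$. Summing over the at most $i(F)$ interior sides other than $\sigma$ yields that the total length of interior sides of $F$ other than $\sigma$ is $< \tfrac{i(F)}{6}\ell(\bd F)$. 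Subtracting this from the first inequality — the exterior sides other than $\sigma$ contribute $\sum_{\tau \neq \sigma}\ell(\tau) \; - \; (\text{interior contribution}) \; \geq \; \tfrac12\ell(\bd F) - \tfrac{i(F)}{6}\ell(\bd F)$ — gives the desired bound. (The inequality is strict because at least one interior side contributes strictly, or more simply because the small cancellation bound is strict; one should check the degenerate case $i(F)=0$, where the second inequality reads $\sum_{\tau} \ell(\tau) > \tfrac12 \ell(\bd F)$ over exterior sides $\tau \neq \sigma$, which follows from the first inequality since in that case \emph{all} non-$\sigma$ sides are exterior, and one needs strictness — this may require noting $\ell(\sigma) < \tfrac12\ell(\bd F)$ strictly, or handling $i(F)=0$ separately, perhaps observing that a face all of whose sides are exterior would have to be the whole diagram, a single-face case one can treat directly.)

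The main obstacle I anticipate is the bookkeeping around which sides are interior versus exterior and ensuring the "at most $i(F)$ interior sides other than $\sigma$" count is used correctly — in particular, making sure that $\sigma$ being exterior means $i(F)$ counts only sides genuinely distinct from $\sigma$, and that every interior side really does arise as a shared subpath eligible for \cref{SmallCancellationGeometric} (this uses bareness to rule out an interior side bordering an inessential face). A secondary subtlety is the strictness of the second inequality in the edge case $i(F) = 0$; I expect the resolution is that the first inequality is already strict whenever $\sigma$ is a proper sub-arc, i.e. whenever $F$ is not itself a single-face diagram with $\bd F = \sigma \cup \sigma'$ degenerate, and that such degenerate cases don't occur for faces of a minimal diagram of a genuine (non-degenerate) triangle, or are excluded by hypothesis elsewhere. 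Modulo these case checks, the argument is a direct two-line consequence of geodesity plus \cref{SmallCancellationGeometric}.
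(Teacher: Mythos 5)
Your proof is correct and is essentially the paper's own (very terse) argument: the first inequality comes from $\sigma$ being geodesic, so the complementary arc of $\bd F$, which joins the same endpoints, is at least as long; the second comes from bounding each of the $i(F)$ interior sides by $\tfrac{1}{6}\ell(\bd F)$ using \cref{SmallCancellationGeometric} together with the fact that a minimal diagram is reduced and bare. The $i(F)=0$ strictness edge case you flag is real but harmless --- the paper's one-line proof ignores it as well, and in every application the face in question has at least one interior side.
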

\begin{proof}
If $\sigma$ is an exterior side, then $\sigma$ is geodesic, from which the first inequality follows.  The second inequality follows from the first inequality and \cref{SmallCancellationGeometric}.
\end{proof}

\begin{definition}
Let $A$ be the union of all faces $F$ of $M$ such that $\bd F$ does not share an edge with $\alpha$, if at least one such $F$ exists: otherwise, set $A = \{a\}$.  We call $A$ the \emph{$a$-corner} of $M$.  Similarly define $B$ and $C$, the $b$-corner and $c$-corner of $M$.  A face which is not included in a corner, i.e. one that shares at least one edge with each of $\alpha, \beta,$ and $\gamma$, is called a \emph{middle face}.  This is unique if it exists, and is denoted $D$.  Thus $A,B,C,D$ divide $M$ into three or four (possibly overlapping) regions.  Figure \ref{Cornersabc} illustrates where the corners and middle faces are in van Kampen diagrams of various types.
\end{definition}

\begin{figure}[h!]
\centering
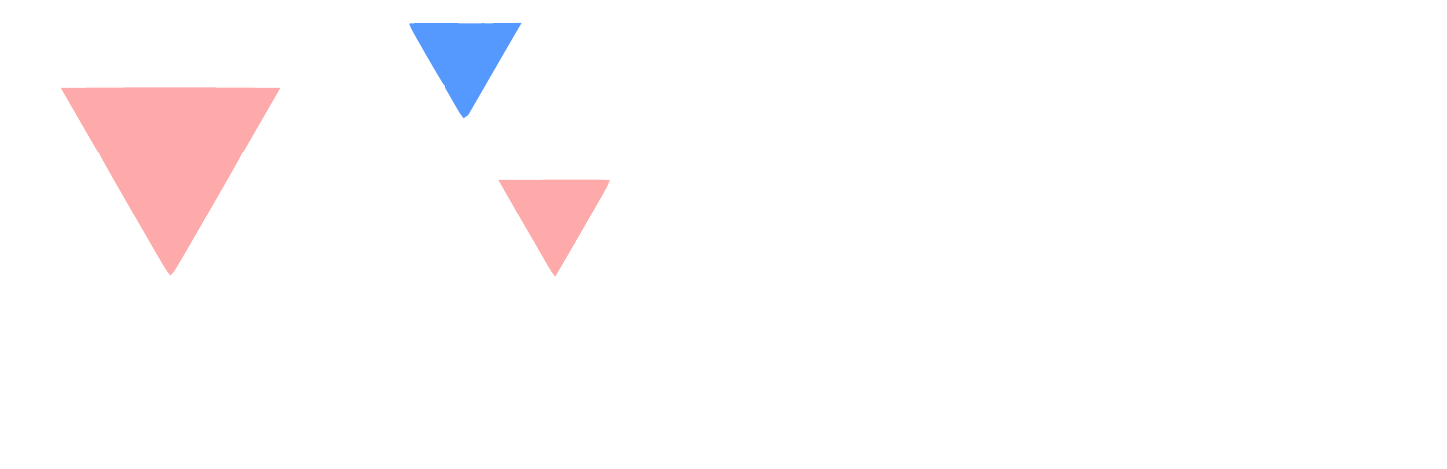
\caption{Examples of corners and middle faces}
\label{Cornersabc}
\end{figure}

A corner may contain no faces if $M$ is of type I-II.  A corner containing at least one face contains an extremal face, which is either triangular, or possibly quadrilateral if $M$ is of type IV or V.  This may be followed by a sequence of quadrilateral faces; which may be followed by a pentagonal face if $M$ is of type III, IV or V; which may be followed by two pentagonal faces, each with one exterior side, if $M$ is of type IV. 

We divide the boundary of the $b$-corner into three parts
\begin{align*}
\alpha_B = \bd B \cap \alpha && \gamma_B = \bd B \cap \gamma && \iota_B = \bd B \smallsetminus (\alpha \cup \gamma)
\end{align*}
and assign similar notation for the other two corners.  The next proposition shows that $\alpha_B, \gamma_B$, and $\iota_B$ are of comparable length, and if one is small, then the entire corner is small.

\begin{proposition}\label{SmallCorners}
The following inequalities hold, and analogous inequalities hold after switching the roles of $a,b,$ and $c$.
\begin{enumerate}[label=\normalfont(\alph*)]
\item $\ell(\iota_B) < 2\min(\ell(\alpha_B), \ell(\gamma_B))$.  If $M$ has a middle face, $\ell(\iota_B) < \min(\ell(\alpha_B), \ell(\gamma_B))$.
\item $\max(\ell(\alpha_B), \ell(\gamma_B)) < 3\min(\ell(\alpha_B), \ell(\gamma_B))$.  If $M$ has a middle face, $\max(\ell(\alpha_B), \ell(\gamma_B)) < 2\min(\ell(\alpha_B), \ell(\gamma_B))$.
\item If $F$ is the middle face of $M$ or $F$ is the pentagonal face of $A$ that borders $B$ and $C$, then $\ell(\bd F \cap (\iota_B \cup \alpha_D \cup \iota_C)) < \ell(\alpha)$.
\end{enumerate}
\end{proposition}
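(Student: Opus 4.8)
The plan is to prove (a) first and deduce (b) and (c) from it; (b) uses the first form of (a) and (c) uses the sharper form. Throughout I would keep in mind three facts: $\alpha_B$ and $\gamma_B$ are geodesic, being subpaths of the geodesics $\alpha$ and $\gamma$; every interior side $\tau$ of a face $F$ satisfies $\ell(\tau) < \tfrac{1}{6}\ell(\bd F)$ by \cref{SmallCancellationGeometric}, since $M$ is reduced; and \cref{DehnReduction} bounds the exterior sides of a face below in terms of $\ell(\bd F)$ and $i(F)$.

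For (a): if $B = \{b\}$ the statement is trivial, so assume $B$ contains a face and write $B = F_1 \cup \cdots \cup F_m$ as the chain of faces described just before the proposition, with $F_1$ extremal. The structural observation, read off from \cref{StrebelsTheorem} and that description, is that every face of $B$ other than $F_1$ and the at most three faces at the far end of the chain is a quadrilateral whose two interior sides are glued to its neighbours in the chain; such a face contributes exactly one side to $\alpha_B$ and one to $\gamma_B$ and nothing to $\iota_B$. Hence $\iota_B$ is a concatenation of the few interior sides contributed by $F_1$ (at most one, and only if $F_1$ is quadrilateral) and by the one, two, or three faces at the far end. I would bound $\ell(\iota_B)$ by estimating each of these interior sides by $\tfrac{1}{6}$ of its face's perimeter, and bound $\ell(\alpha_B)$, $\ell(\gamma_B)$ below by applying \cref{DehnReduction} to the faces of $B$ with their geodesic exterior sides. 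For any face with an exterior side on each of $\alpha$ and $\gamma$, \cref{DehnReduction} gives $\ell(\iota_i) < \tfrac16\ell(\bd F_i) < \min(\ell(\alpha_i),\ell(\gamma_i))$, and summing these over the chain already yields the inequality with room to spare; the sharper bound in the presence of a middle face comes from the fact that the interior side of the $B$-face adjacent to that middle region is itself $< \tfrac16$ of a perimeter, effectively deleting one interior side from the tally. The main obstacle is the remaining cases — chiefly the type IV corner that ends in two pentagonal faces each with a single exterior side — where these end faces put several interior sides on $\iota_B$ but only one exterior side apiece, so the lower bound for $\ell(\alpha_B)$ or $\ell(\gamma_B)$ cannot be produced face by face. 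I would handle this either by an induction on $m$ (removing the last face of the chain and tracking how $\iota_B$, $\alpha_B$, $\gamma_B$ change) or by feeding the per-face estimates into the two geodesic inequalities $\ell(\alpha_B) \le \ell(\gamma_B) + \ell(\iota_B)$ and $\ell(\gamma_B) \le \ell(\alpha_B) + \ell(\iota_B)$ simultaneously.

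For (b): assume without loss of generality $\ell(\gamma_B) \le \ell(\alpha_B)$. The path $\gamma_B \ast \iota_B$ joins $b$ to the far endpoint of $\alpha_B$, the same two points joined by the geodesic $\alpha_B$; pushing forward along the $1$-Lipschitz map $f : M \to \Gamma(G,S)$ of \cref{vKL} gives $\ell(\alpha_B) \le \ell(\gamma_B) + \ell(\iota_B)$. Combining with (a), $\ell(\alpha_B) < \ell(\gamma_B) + 2\ell(\gamma_B) = 3\min(\ell(\alpha_B),\ell(\gamma_B))$, and in the middle-face case the factor $2$ becomes $1$, giving $\max < 2\min$.

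For (c): when $M$ has a middle face $D$ and $F = D$, the set $\bd D \cap (\iota_B \cup \alpha_D \cup \iota_C)$ is the connected subpath $e_B \ast \alpha_D \ast e_C$ of $\bd D$, where $e_B = \bd D \cap \iota_B$ and $e_C = \bd D \cap \iota_C$ are the single interior sides along which $D$ meets $B$ and $C$. Since $M$ has a middle face, the sharper form of (a) gives $\ell(e_B) \le \ell(\iota_B) < \ell(\alpha_B)$ and $\ell(e_C) \le \ell(\iota_C) < \ell(\alpha_C)$, while $\alpha_B$, $\alpha_D$, $\alpha_C$ are subarcs of $\alpha$ occurring in that order with pairwise disjoint interiors, so $\ell(\alpha_B) + \ell(\alpha_D) + \ell(\alpha_C) \le \ell(\alpha)$; adding the three estimates proves the claim. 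When $M$ has no middle face, $F$ is the pentagonal face of $A$ bordering $B$ and $C$; as $F \in A$ it has no side on $\alpha$, so $\bd F \cap (\iota_B \cup \alpha_D \cup \iota_C)$ is just the sides of $F$ shared with $B$ and with $C$, and an analogous estimate — bounding these interior sides via \cref{SmallCancellationGeometric} and comparing $\ell(\bd F)$ with $\ell(\alpha)$ through the faces adjacent to $F$ — gives the bound.
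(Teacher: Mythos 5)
Your overall strategy is the same as the paper's: decompose the corner $B$ into its chain of faces, bound each interior side by $\tfrac16$ of its face's perimeter via \cref{SmallCancellationGeometric}, and bound the exterior sides from below via \cref{DehnReduction}; your derivations of (b) from (a) and of the middle-face case of (c) from the sharp form of (a) also match the paper's. But there is a genuine gap exactly where you flag the difficulty: the type IV corner ending in the pentagonal faces $B_{k+2}, B_{k+3}$ with one exterior side each. You do not carry out this case, and neither of your two proposed fallbacks is shown to work. In particular, the pair of triangle inequalities $\ell(\alpha_B) \le \ell(\gamma_B)+\ell(\iota_B)$ and $\ell(\gamma_B) \le \ell(\alpha_B)+\ell(\iota_B)$ cannot by themselves close the argument: they only bound $|\ell(\alpha_B)-\ell(\gamma_B)|$ by $\ell(\iota_B)$, whereas the claim is the reverse-direction bound $\ell(\iota_B) < 2\min(\ell(\alpha_B),\ell(\gamma_B))$, so some genuinely new lower bound on the exterior sides of the end faces is indispensable.

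The paper supplies exactly that missing input with a chain of estimates you would need to reproduce: $\ell(\iota_i)<\ell(\alpha_i)$ for the quadrilaterals; $\ell(\iota_{k+1}),\ell(\iota_{k+3})<\ell(\alpha_k)+\ell(\alpha_{k+1})$ from applying \cref{DehnReduction} to $\gamma_{k+1}$ in $B_{k+1}$; the observation that $\bd B_{k+2}$ is one exterior side plus four interior sides, so $\ell(\alpha_{k+2})>\tfrac13\ell(\bd B_{k+2})$ and each interior side of $B_{k+2}$ is less than $\tfrac12\ell(\alpha_{k+2})$ (similarly for $B_{k+3}$); and finally \cref{DehnReduction} applied to $\alpha_{k+2}$ and to $\gamma_{k+2}$ to bound $\tfrac13\ell(\bd B_{k+2})+\tfrac13\ell(\bd B_{k+3})$ (which dominates $\ell(\iota_B)$) by $2\ell(\iota_{k+1})+4\ell(\iota_{k+2})+2\ell(\iota_{k+3})$, hence by $2\ell(\alpha_B)$. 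The same gap propagates to your treatment of (c) in the no-middle-face case, where "an analogous estimate \ldots gives the bound" stands in for the actual comparison (the paper bounds $\bd F\cap\iota_B$ by the side $\iota_{k+3}$, already controlled by $\ell(\alpha_k)+\ell(\alpha_{k+1})\le\ell(\alpha_B)$ in the chain above). Until these end-face estimates are written out, parts (a) and (c) are not proved in the hardest configuration.
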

\begin{proof}
Assume that $M$ is of type IV, the most complicated case.  If $M$ is of a different type the arguments are analogous but shorter.  Assume without loss of generality that $\ell(\alpha_B) \leq \ell(\gamma_B)$.

If $B = \{b\}$, then the statement is trivial.  Therefore let $B = \bigcup_{i=0}^{k+3} B_i$, where
\begin{itemize}
\item $B_0$ is the extremal face containing $b$.
\item $B_1, \ldots B_k$ is a (possibly empty) sequence of quadrilateral faces such that $B_{j-1}$ borders $B_j$ for all $j \in \{1, \ldots, k\}$.
\item $B_{k+1}$ is the pentagonal face with two exterior sides, if it exists: otherwise, $B_{k+1} = B_0$.
\item $B_{k+2}$ and $B_{k+3}$ are the pentagonal faces with one exterior side bordering $\alpha$ and $\gamma$, respectively.
\end{itemize}

\begin{figure}[h!]
\centering
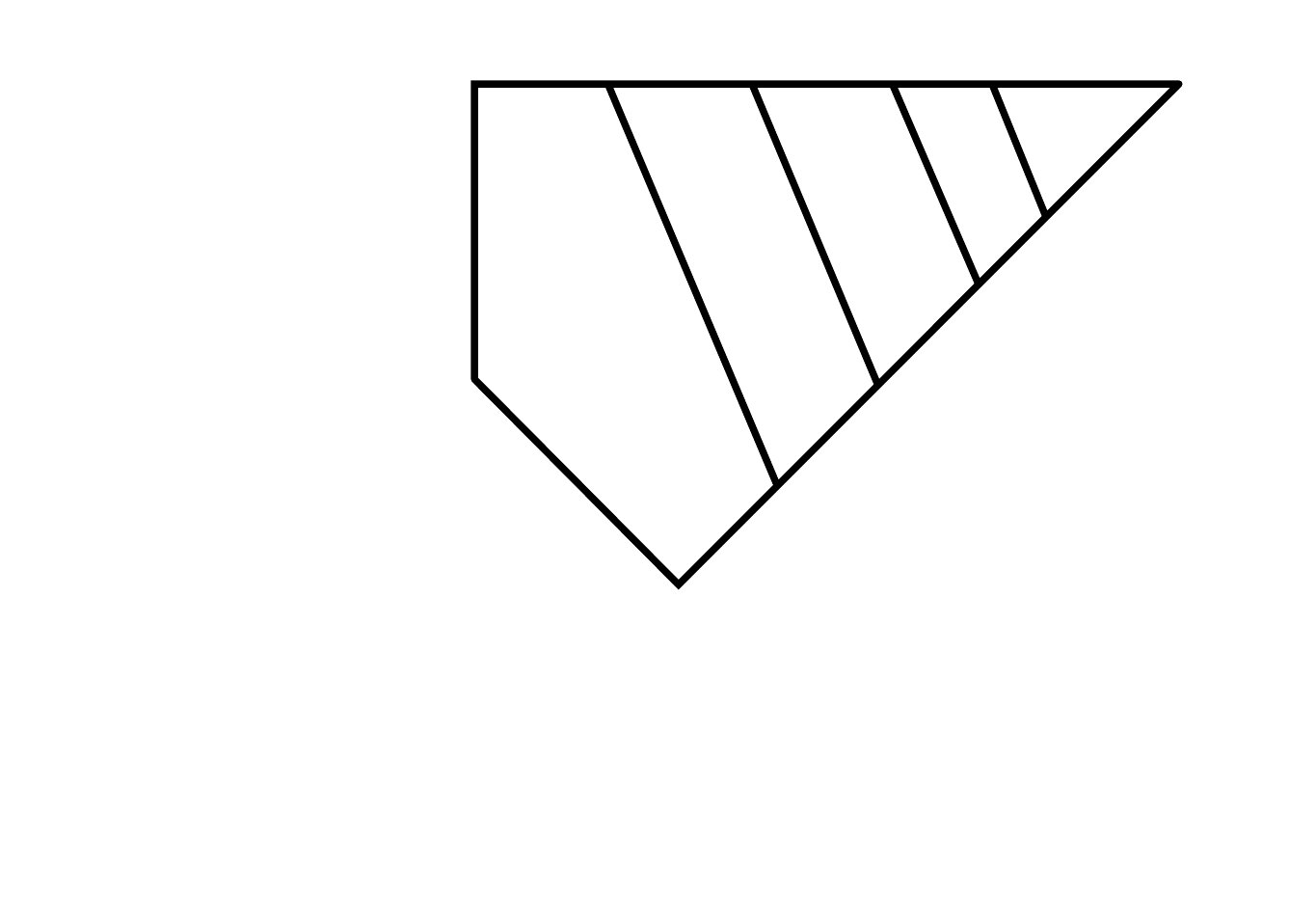
\caption{The $b$-corner of $M$.}
\label{UpperCorner}
\end{figure}

We assign the following labels in order to streamline notation: see Figure \ref{UpperCorner}.

\begin{center}
\begin{tabular}{ll}
$\alpha_i=\bd B_i \cap \alpha$ & for $i \in \{0, \ldots, k+2\}$\\ 
$\gamma_i=\bd B_i \cap \gamma$ & for $i \in \{0, \ldots, k+1\}$, and $\gamma_{k+2} = \bd B_{k+3} \cap \gamma$\\
$\iota_i=\bd B_i \cap \bd B_{i+1}$ & for $i \in \{0, \ldots, k+2\}$, and $\iota_{k+3} = \bd B_{k+1} \cap \bd B_{k+3}$
\end{tabular}
\end{center}

Let $i \in \{0, \ldots, k\}$.  Then applying \cref{DehnReduction} to $\gamma_i$, we obtain $\ell(\alpha_i) > \frac{1}{6}\ell(\bd B_i)$.  Since $\iota_i$ is an interior side of $B_i$, $\ell(\iota_i) < \frac{1}{6}\ell(\bd B_i)$ by the  $C'(\sfrac{1}{6})$ condition.  Therefore
\begin{equation}
\ell(\iota_i) < \ell(\alpha_i) \text{ for } i \in \{0, \ldots, k\} \, .
\end{equation}

Now consider $B_{k+1}$.  Applying \cref{DehnReduction} to $\gamma_{k+1}$ yields $\ell(\alpha_{k+1})+\ell(\iota_k) > \frac{1}{6}\ell(\bd B_{k+1})$.  We know by (2) that $\ell(\iota_k) < \ell(\alpha_k)$, so $\frac{1}{6}\ell(\bd B_{k+1}) < \ell(\alpha_k)+\ell(\alpha_{k+1})$.  But both $\iota_{k+1}$ and $\iota_{k+3}$ are interior sides of $B_{k+1}$.  Therefore $\ell(\iota_{k+1}) < \frac{1}{6}\ell(\bd B_{k+1})$ and $\ell(\iota_{k+3}) < \frac{1}{6}\ell(\bd B_{k+1})$. Thus
\begin{equation}
\begin{split}
\ell(\iota_{k+1}) &< \ell(\alpha_k)+\ell(\alpha_{k+1})\\
\ell(\iota_{k+3}) &< \ell(\alpha_k)+\ell(\alpha_{k+1}) \, .
\end{split}
\end{equation}

Notice that $\bd B_{k+2}$ consists of four interior sides and $\alpha_{k+2}$.  Therefore $\ell(\alpha_{k+2}) > \frac{1}{3}\ell(\bd B_{k+2})$.  Since $\iota_{k+1}$ is an interior side of $B_{k+2}$, we have $\ell(\iota_{k+1})< \frac{1}{6}\ell(\bd B_{k+2})$.  Similar observations about $\iota_{k+1}, \iota_{k+3},$ and $\gamma_{k+2}$ yield
\begin{equation}
\begin{split}
&\ell(\iota_{k+1}) < \tfrac{1}{2}\ell(\alpha_{k+2}) \quad \ell(\iota_{k+3}) < \tfrac{1}{2}\ell(\gamma_{k+2})\\
&\ell(\iota_{k+2}) < \tfrac{1}{2}\ell(\alpha_{k+2}) \quad \ell(\iota_{k+2}) < \tfrac{1}{2}\ell(\gamma_{k+2}) \, .
\end{split}
\end{equation}

Now $\iota_B$ consists of two interior sides of $B_{k+2}$ and two interior sides of $B_{k+3}$. Thus
\begin{equation}
\iota_B < \tfrac{1}{3}\ell(\bd B_{k+2})+\tfrac{1}{3}\ell(\bd B_{k+3}) \, .
\end{equation}

On the other hand, by \cref{DehnReduction} applied to $\alpha_{k+2}$ we have that $\ell(\iota_{k+1})+\ell(\iota_{k+2}) > \frac{1}{6}\ell(\bd B_{k+2})$.  Applying \cref{DehnReduction} to $\gamma_{k+2}$, we find that $\ell(\iota_{k+2})+\ell(\iota_{k+3}) > \frac{1}{6}\ell(\bd B_{k+2})$.  Therefore
\begin{equation}
\tfrac{1}{3}\ell(\bd B_{k+2})+\tfrac{1}{3}\ell(\bd B_{k+3}) < 2\ell(\iota_{k+1})+4\ell(\iota_{k+2})+2\ell(\iota_{k+3}).
\end{equation}

Combining inequalities (3)-(6), we have
$$\ell(\iota_B) < 2\ell(\alpha_k)+2\ell(\alpha_{k+1})+2\ell(\alpha_{k+2}) \leq 2\ell(\alpha_B).$$
If $M$ has a middle face, then $\iota_B = \iota_k$ and inequality (2) gives that $\iota_B < \alpha_B$.  This proves part (a).  Since $\gamma_B$ is geodesic,
$$\ell(\gamma_B) \leq \ell(\iota_B)+\ell(\alpha_B) < 2\ell(\alpha_B)+\ell(\alpha_B) = 3\ell(\alpha_B),$$
and if $M$ has a middle face this bound is lowered to $2\ell(\alpha_B)$.  This establishes part (b).

For part (c), let $\alpha_D = \alpha \cap D$.  If $F = D$, then $\bd F \cap \iota_B = \iota_k$ in Figure \ref{UpperCorner}.  If $F$ is instead the pentagonal face of $A$, then $\bd F \cap \iota_B = \iota_{k+3}$ (similarly for $\iota_C$).  In either case we have $\ell(\bd F \cap (\iota_B \cup \alpha \cup \iota_C)) < \ell(\alpha_B)+\ell(\alpha_D)+\ell(\alpha_C) = \ell(\alpha)$.  
\end{proof}

\section{Proof of the main result}\label{C'asdimAN}

In this section we prove the following proposition.

\begin{proposition}\label{mainProp}
Let $G = \langle S \mid R \rangle$, where $S$ is finite, and $R$ is a cyclically reduced $C'(\sfrac{1}{6})$ language.  Then any geodesic spanning tree of $\Gamma(G,S)$ is $(\sfrac{1}{9},2)$-tight.
\end{proposition}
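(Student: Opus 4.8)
The plan is to establish the tightness condition by contradiction and then quote \cref{TightCombing}. Fix a geodesic spanning tree $T$ of $\Gamma = \Gamma(G,S)$ rooted at a vertex $x$, and suppose that for some $r > 0$, some vertex $y$, and some $t \le d(x,y) - r$ there are three distinct vertices $p_1, p_2, p_3$ in $T(y, \tfrac{1}{9}r) \cap S(t)$. Each $p_i$ lies on the tree geodesic $[x, z_i]$ for some $z_i \in B(y, \tfrac{1}{9}r)$, and since $d(x,p_i)=t$ the vertex $p_i$ is in fact the vertex at distance $t$ from $x$ along the $T$-path to $z_i$. Two elementary estimates will drive everything: from $d(x,z_i) \ge d(x,y) - \tfrac{1}{9}r > t + \tfrac{8}{9}r$ the terminal subpath $[p_i,z_i]$ of $[x,z_i]$ has length $d(x,z_i)-t > \tfrac{8}{9}r$, while $d(z_i,z_j) < \tfrac{2}{9}r$ for all $i,j$; the point is that $\tfrac{8}{9}r > 3\cdot\tfrac{2}{9}r$, which is the numerical reason $\tfrac{1}{9}$ works.

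Next I extract the branching pattern of the three $T$-geodesics. Let $u$ be the deepest vertex of $T$ lying on all of $[x,z_1]$, $[x,z_2]$, $[x,z_3]$; distinctness of the $p_i$ forces $d(x,u) < t$. By the tripod structure of three points in a tree, after relabeling, $[x,z_1]$ parts from the other two at $u$ while $[x,z_2]$ and $[x,z_3]$ stay together down to a vertex $v$ lying on both, with $d(x,u) \le d(x,v) < t$. Consequently $[u,z_1]$ and $[u,z_2]$ are geodesics meeting only at $u$, $[v,z_2]$ and $[v,z_3]$ are geodesics meeting only at $v$, and $p_1,p_2$ sit at the common distance $t - d(x,u)$ from $u$ along $[u,z_1]$ and $[u,z_2]$, while $p_2,p_3$ sit at the common distance $t - d(x,v)$ from $v$ along $[v,z_2]$ and $[v,z_3]$.

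Now I bring in Strebel's classification. For each relevant pair of the $z_i$ form a simple geodesic triangle using the two $T$-geodesics as two sides and an arbitrary geodesic of length $< \tfrac{2}{9}r$ as the third side; if such a triangle fails to be simple (its third side meets a tree side away from the endpoints) replace the third side so as to cut off the innermost simple subtriangle along the two tree sides, which only shortens sides and preserves all the estimates above. By \cref{StrebelsTheorem} a minimal van Kampen diagram for each such triangle $\Delta$ is a subdivision of one of the four families of \cref{StrebelsTilings}. In each diagram the two sides issuing from the apex ($u$ or $v$) meet only at the apex, so the apex corner contains a face; and \cref{SmallCorners}(b), applied to the corner of $\Delta$ at $z_i$ together with $d(p_i,z_i) > \tfrac{8}{9}r > 3\,d(z_i,z_j)$, shows that $p_i$ cannot lie in that corner, so in each diagram the two apex-level points are confined to the apex corner and the middle face. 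I would then argue, using this confinement together with the length bookkeeping of \cref{SmallCorners} and \cref{DehnReduction}, that the long tree geodesics $[u,z_1],[u,z_2],[u,z_3],[v,z_2],[v,z_3]$ — being genuine geodesics shared as sides among these triangles — force the relators of the diagrams to contain long common subpaths; the $C'(\tfrac{1}{6})$ condition in the form of \cref{SmallCancellationGeometric} then rigidly constrains how the three $T$-geodesics may diverge, and from this one reads off that a second branch vertex $v$ strictly below $u$ with all three apex-level points distinct is impossible. Hence $|T(y,\tfrac{1}{9}r)\cap S(t)| \le 2$, so $T$ is $(\tfrac{1}{9},2)$-tight and $\asdimAN(G)\le 2$ by \cref{TightCombing}.

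The main obstacle is exactly this last step: pushing Strebel's four-family case analysis far enough to extract the rigidity statement, and verifying that $\tfrac{1}{9}$ is small enough in every one of the cases. Everything preceding it — the metric bookkeeping, the tree-branching dichotomy, and the general-position reduction to simple triangles — is routine.
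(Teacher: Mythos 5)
Your setup matches the paper's: argue by contradiction from three points of $S(t)$ hit by tree geodesics ending in a small ball, extract the branch vertices ($u$ and $v$ in your notation, $a$ and $a'$ in the paper's), pass to the maximal simple geodesic subtriangles, take minimal van Kampen diagrams, and invoke \cref{StrebelsTheorem}. Your preliminary estimates ($d(p_i,z_i)>\tfrac{8}{9}r$, pairwise distances $<\tfrac{2}{9}r$, the apex-level points being excluded from the $z_i$-corners via \cref{SmallCorners}) correspond to \cref{a'notinCb}. But the step you defer --- ``pushing Strebel's four-family case analysis far enough to extract the rigidity statement'' --- is not a routine verification; it is the actual content of the proof, occupying all of \cref{geodSpanningTreeisTight}, and nothing you have written substitutes for it.

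Concretely, the paper glues the two minimal diagrams $M$ and $M'$ along the shared tree geodesic $[1,x]$ to form a single diagram $N$, which need \emph{not} be reduced: a face of $M$ may cancel with a face of $M'$, so \cref{SmallCancellationGeometric} cannot be applied directly to ``long common subpaths'' of relators as you propose --- long overlaps are permitted precisely when the faces cancel. The argument therefore requires (i) \cref{Flush}, showing that cancelling faces meet $[1,x]$ in exactly the same segment (using that $R$ is cyclically reduced and the sides are geodesic); (ii) the subsumption dichotomy of \cref{notInternal} and \cref{Bsubsumed}, identifying which faces of one diagram can sit entirely inside a boundary side of a non-cancelling face of the other; (iii) the fact that the extremal face $E'$ at $a'$ cannot cancel, which uses injectivity of the combinatorial map on $\beta\cup\beta'$ coming from the tree structure --- this is where the hypothesis that the combing is a \emph{spanning tree}, not just any geodesic combing, enters essentially, and your sketch never uses it; and (iv) the quantitative chain \cref{w'Small}--\cref{DescribingB} pinning down a face $F$ with $6\varepsilon r>\ell(\bd F)>(18\varepsilon-2)r$, whence $\varepsilon>\tfrac{1}{6}$. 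Without some version of (i)--(iv) you have no contradiction, so the proposal as written has a genuine gap at its core.
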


We divide this section into two parts.  In \cref{vKDConstruction}, we fix all notation and assumptions, and give a description of a van Kampen diagram which is obtained by fixing a geodesic spanning tree of $\Gamma(G,S)$ rooted at 1 and assuming it is \emph{not} $(\varepsilon, 2)$-tight for some $\varepsilon > 0$.  All lemmas in \cref{geodSpanningTreeisTight} are proved under the assumptions stated in \cref{vKDConstruction}.  We determine $\varepsilon$ along the way, choosing at each stage an $\varepsilon$ small enough to make the lemmas work.  In the end we reach a contradiction with any $\varepsilon \leq \frac{1}{9}$, meaning that the spanning tree must have been $(\sfrac{1}{9},2)$-tight all along.

\subsection{Construction of a van Kampen diagram}\label{vKDConstruction}

Let $G=\langle S \mid R \rangle$ be a finitely generated $C'(\sfrac{1}{6})$ group.  Fix a geodesic spanning tree $T$ of $\Gamma(G,S)$ rooted at $1$.  Let let $d$ be the word metric on $G$ with respect to $S$, and let $\|\cdot\|$ be the corresponding word norm.  For each $g \in G$, let $[1,g]$ be the unique path from $1$ to $g$ in $T$.

Suppose to the contrary that $T$ is not $(\varepsilon, 2)$-tight.  Let $r \in \mathbb N$ witness that $T$ is not $(\varepsilon, 2)$-tight.  Then there exists an $x \in G$ such that $\|x\| \geq r$ and $B(x, \varepsilon r)$ contains two elements $y, y'$ such that the geodesics $[1,x], [1,y]$, and $[1,y']$ each pass through different elements of the sphere of radius $\|x\|-r$ in $\Gamma(G,S)$.  Because $T$ is a tree, for every distinct $g, h \in G$, there is a unique vertex of $\Gamma(G,S)$ where the geodesics $[1,g]$ and $[1,h]$ diverge.  Let $a$ be the point at which $[1,x]$ diverges from $[1,y]$, and let $a'$ be the point at which $[1,x]$ diverges from $[1,y']$.  Then we have that $d(a,x), d(a',x) \geq r$ and $d(a,y), d(a',y') > r-\varepsilon r$.  Without loss of generality suppose that $\|a'\| \geq \|a\|$.

Let $[x,y]$ and $[x,y']$ be arbitrarily chosen geodesics.  Let $\Delta (1,x,y)$ be the geodesic triangle in $\Gamma(G,S)$ with sides $[1,x], [1,y]$ and $[x,y]$; similarly define $\Delta (1,x,y')$.  Note that $\Delta(1,x,y)$ is not a tripod, since $\ell([x,y]) < \varepsilon r < 2(r - \varepsilon r) \leq \ell([x,a]) + \ell([a,y])$.  Therefore $\Delta(1,x,y)$ contains exactly one maximal simple geodesic triangle, and $a$ is the vertex of this triangle which is closest to $1$.  Let $\Delta = \Delta(a,b,c)$ be the maximal simple geodesic triangle in $\Delta(1,x,y)$, where $a, b,$ and $c$ are the points closest to $1,x,$ and $y$, respectively.  Similarly let $\Delta' = \Delta (a',b',c')$ be the maximal simple geodesic triangle of $\Delta (1,x,y')$ where $a',b',$ and $c'$ are the vertices of $\Delta (a',b',c')$ which are closest to $1,x,$ and $y'$, respectively.  Note that $d(b,x), d(b',x) < \varepsilon r$, and $d(a', x) > r$, thus we have that $\|a\| \leq \|a'\| < \|b\| \leq \|b'\|$ or $\|a\| \leq \|a'\| < \|b'\| \leq \|b\|$.

\begin{figure}[h!]
\centering
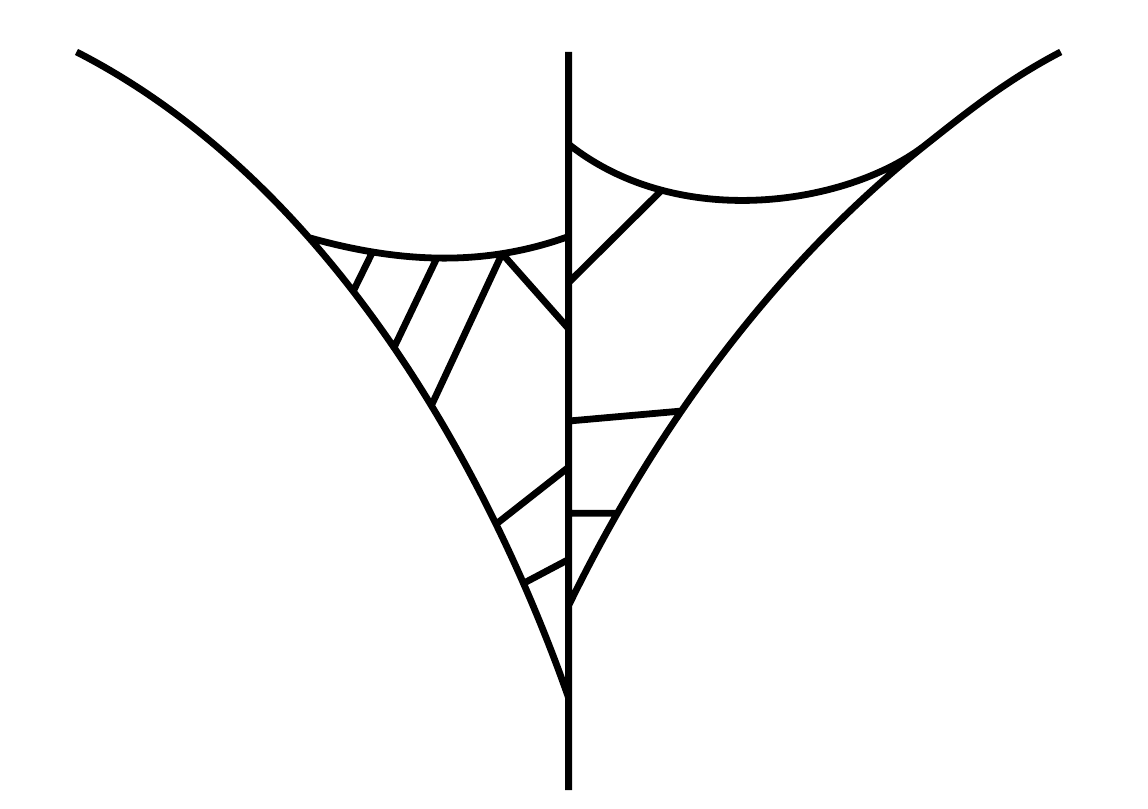
\caption{$N = M \cup_{[1,x]} M'$}\label{GeodTriangles}
\end{figure}

Let $M$ and $M'$ be minimal van Kampen diagrams for $\Delta$ and $\Delta'$, respectively.  Attaching the appropriate geodesic segments and gluing $M$ and $M'$ along $[1,x]$, we obtain a van Kampen diagram, call it $N$, for the circuit $[1,y]*[y,x]*[x,y']*[y',1]$.  Thus $N$ is the diagram shown in Figure \ref{GeodTriangles}, allowing that $b, b'$ may appear in either order along $[1,x]$, and that $M$ and $M'$ may take any of the forms depicted in Figure \ref{StrebelsTilings}.  We retain all notation used in the previous section to describe the geometry of the simple geodesic triangles $\Delta$ and $\Delta'$ and their van Kampen diagrams $M$ and $M'$, using the symbol $'$ where appropriate.  Thus $\alpha$ is the geodesic opposite $a$, $C'$ is the $c'$-corner, which is opposite $\gamma'$, etc.  The vertices labeled $h$ and $h'$ in Figure \ref{GeodTriangles} are the vertices of $\gamma, \gamma'$ at the extremities of the $b$ and $b'$ corner, respectively.

Since $M$ is minimal, no two faces of $M$ cancel: similarly for $M'$.  However, in principle a face of $M$ may cancel with a face of $M'$, so $N$ may or may not be reduced.  If $F$ is a face of $M$, then we refer to the number of sides of $F$ with respect to $M$, not $N$.  Thus for example if $F$ is a quadrilateral face in $M$, we will still refer to it as a quadrilateral face, even though a side of $\bd F$ might be split into multiple sides in $N$ if it borders $[1,x]$.  Likewise, we call a side of a face $F$ of $M$ exterior if it is exterior in $M$, even though it may not be a subpath of $\bd N$.

Note that the combinatorial map $f$ might not be injective when restricted to $\bd M \cup \bd M'$.  For example, it may happen that $f(\alpha')$ intersects $f(\alpha)$ or $f(\beta)$ in $\Gamma(G,S)$.  However, it is important to note that $[1,x]$, $[1,y]$ and $[1,y']$ do not intersect at any vertex of $\Gamma(G,S)$ farther from the identity than $a'$, so $f$ \textit{is} injective when restricted to $\beta \cup \beta' \cup \gamma \cup \gamma'$.

\subsection{Proof that a certain geodesic spanning tree is tight}\label{geodSpanningTreeisTight}

All lemmas in this subsection are proved under the standing assumptions described in \cref{vKDConstruction}, which are not restated.  The argument is as follows.  First, we examine how faces of $M$ and $M'$ may line up along their common boundary, and determine that there is a face of $M'$ that shares more than a third of its boundary with $\gamma$ and does not cancel with any face of $M$.  Playing around with inequalities provided by the $C'(\sfrac{1}{6})$ condition, we find that this situation implies that $\varepsilon > \frac{1}{9}$.  Since we were free to choose $\varepsilon$ from the start, this is the desired contradiction, proving that $T$ is in fact $(\sfrac{1}{9}, 2)$-tight.

\begin{lemma}\label{a'notinCb}
Let $h,h'$ be the vertices of $\gamma_B, \gamma_B'$, respectively, which are closest to $a'$.  Then $\min(d(a',h), d(a',h')) > (1 - 3\varepsilon) r$.
\end{lemma}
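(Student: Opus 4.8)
The plan is to prove $d(a',h) > (1-3\varepsilon)r$; the inequality $d(a',h') > (1-3\varepsilon)r$ follows by the verbatim argument with $M,\Delta,b,c,\alpha,y$ replaced by $M',\Delta',b',c',\alpha',y'$, and then $\min(d(a',h),d(a',h')) > (1-3\varepsilon)r$ is immediate. The whole argument is a short triangle-inequality computation fed by two facts: the smallness of the ``top'' of $N$, coming from $y \in B(x,\varepsilon r)$, and \cref{SmallCorners}(b), which controls $\ell(\gamma_B)$ in terms of $\ell(\alpha_B) \le \ell(\alpha)$.

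First I would record the geometry of $\alpha$: it is the subpath of the geodesic $[x,y]$ between $b$ and $c$, the vertices at which $[1,x]$ and $[1,y]$ respectively leave $[x,y]$, and since $b$ is the corner of $\Delta$ closest to $x$ and $c$ the one closest to $y$, these occur in the order $x,b,c,y$ along $[x,y]$. Reading $[x,y]$ from $x$ we therefore traverse a geodesic segment of length $d(b,x)$ to reach $b$ and then $\alpha$ to reach $c$, whence
$$d(b,x) + \ell(\alpha) \le \ell([x,y]) = d(x,y) < \varepsilon r.$$
Second, I would bound $\ell(\gamma_B)$: by \cref{SmallCorners}(b) — trivially if $B=\{b\}$ — we have $\ell(\gamma_B) \le 3\ell(\alpha_B) \le 3\ell(\alpha)$, since $\alpha_B = \bd B \cap \alpha$ is a subpath of $\alpha$. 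Now $h$ is an endpoint of the path $\gamma_B \subseteq \gamma$ and $b$ is its other endpoint (or $h=b$), so $d(h,b) \le \ell(\gamma_B)$, and by the triangle inequality
$$d(h,x) \le d(h,b) + d(b,x) \le \ell(\gamma_B) + d(b,x) \le 3\ell(\alpha) + d(b,x) \le 3\big(\ell(\alpha) + d(b,x)\big) < 3\varepsilon r.$$

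Finally, using $d(a',x) \ge r$ (recorded in \cref{vKDConstruction}), the triangle inequality gives
$$d(a',h) \ge d(a',x) - d(h,x) \ge r - d(h,x) > r - 3\varepsilon r = (1-3\varepsilon)r,$$
completing the argument. I do not anticipate a genuine obstacle — the content is bookkeeping — but the two points needing care are: confirming the order $x,b,c,y$ along $[x,y]$, which is what makes the \emph{combined} estimate $d(b,x) + \ell(\alpha) < \varepsilon r$ available (knowing $d(b,x) < \varepsilon r$ and $\ell(\alpha) < \varepsilon r$ only separately would yield merely $(1-4\varepsilon)r$); and the degenerate case $B=\{b\}$, where $\gamma_B$ collapses to a point. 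It is worth noting that this lemma needs no upper bound on $\varepsilon$: the estimate $d(h,x) < 3\varepsilon r$ holds for every $\varepsilon > 0$, and the claimed inequality is vacuous once $\varepsilon \ge \tfrac13$.
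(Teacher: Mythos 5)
Your proof is correct and follows essentially the same route as the paper's: bound $d(h,x)$ by $\ell(\gamma_B)+d(b,x)<3(\ell(\alpha_B)+d(b,x))\leq 3d(c,x)<3d(x,y)<3\varepsilon r$ via \cref{SmallCorners}(b), then subtract from $d(a',x)\geq r$. Your writeup is in fact a touch more careful than the paper's (which has a few slips of notation in the displayed chain of inequalities), and your observation that the order $x,b,c,y$ along $[x,y]$ is what yields the combined bound $d(b,x)+\ell(\alpha)<\varepsilon r$ is exactly the point the paper is implicitly using.
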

\begin{proof}
By \cref{SmallCorners}, $\ell(\gamma_B) < 3\ell(\alpha_B)$ and so $d(h,x) = \ell(\gamma_B)+d(a,x) < 3\ell(\alpha_B)+d(a,x) \leq 3(\ell(\alpha)+d(a,x)) = 3d(b,x) < 3d(y,x) < 3\varepsilon r$.  Since $d(a,x) > r$, we have $d(a,h) > (1-3\varepsilon)r$.  Similarly for $h'$.
\end{proof}

Now we examine how faces of $M$ and $M'$ may meet up along $\gamma \cup \gamma'$.  We say that a face $F$ of $N$ \emph{cancels} if there is some face $F'$ of $M$ such that $F$ and $F'$ cancel.  If $F, F'$ are faces of $M$ and $M'$, we say that $F'$ \emph{subsumes} $F$ if $F$ does not cancel with $F'$ but $(F \cap \gamma) \subseteq (F' \cap \gamma')$.  We say that $F$ is \emph{subsumed} if there is some face $F'$ that subsumes $F$.  We use the same terminology when the roles of $F$, $M$, and $\gamma$ are switched with those of $F'$, $M'$, and $\gamma'$.

\begin{figure}[h!]
\centering
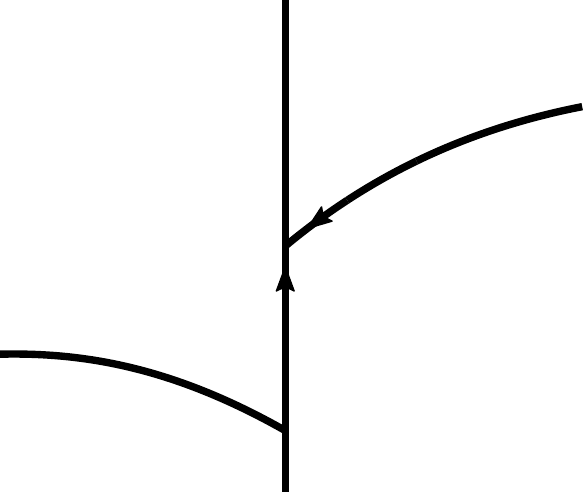
\caption{}
\label{FlushFigure}
\end{figure}

\begin{lemma}\label{Flush}
Let $F, F'$ be faces of $M,M'$.  If $F$ cancels with $F'$, then $\bd F \cap \gamma = \bd F' \cap \gamma'$.
\end{lemma}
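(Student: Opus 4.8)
The plan is to read the cancellation as an exact matching of the two boundary cycles and then to identify $\bd F\cap\gamma$ and $\bd F'\cap\gamma'$ as ``the same stretch of $[1,x]$'' seen from the $M$-side and the $M'$-side. Fix a cancelling edge $e=(u,v)\in\bd F\cap\bd F'$; since $F\subseteq M$ and $F'\subseteq M'$, it lies on the arc $\gamma\cap\gamma'$ along which $M$ and $M'$ are identified in $N$, and by definition $\Lab(\bd F,u,+)=\Lab(\bd F',u,-)$. A walk in a Cayley graph is determined by its initial vertex together with its label (\cref{vKL}), so the two walks $f(\bd F,u,+)$ and $f(\bd F',u,-)$ are one and the same closed walk $Z$ in $\Gamma(G,S)$; reading the boundary cycles of $F$ and $F'$ in lock-step from $u$ thus produces a label- and orientation-preserving bijection between them fixing $e$, and in particular $f(\bd F)=f(\bd F')$. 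The edges of $\bd F$ that literally coincide in $N$ with their partners in $\bd F'$ form a subpath $K$ of $\gamma\cap\gamma'$ containing $e$.

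By \cref{StrebelsTheorem} every face of $M$ meets the exterior geodesic $\gamma$ in a single subpath, and likewise for $M'$ and $\gamma'$; write $P=\bd F\cap\gamma$ and $P'=\bd F'\cap\gamma'$, so that both contain $K$ (hence $e$) and both are carried isometrically by $f$ onto subsegments of $[1,x]$. I claim $P=P'$. If not, then at one end of $K$ — say the end nearer $a$ — exactly one of $P,P'$ runs past $K$: they cannot both run past, for then both boundary walks would leave $K$ along a common edge of $N$, enlarging $K$ (the subcase where that end of $K$ is the corner $a'$ is automatic, as $\gamma'$ ends there). Say $P$ runs past; let $z$ be the $a$-end of $K$ and $g_0$ the edge of $\gamma$ just past $z$ toward $a$, so $g_0\in\bd F\setminus\bd F'$.

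Since $g_0\subseteq\gamma\subseteq[1,x]$, its image is an edge of $[1,x]$, hence is traversed by $Z=f(\bd F')$; as $g_0\notin\bd F'$, the map $f$ must identify $g_0$ with some \emph{other} edge $g_1$ of $\bd F'$. But $g_1$ cannot lie on $\gamma'$, since $f$ is injective on the simple arc $\gamma\cup\gamma'$; it cannot lie on $\alpha'$ or $\beta'$, since $\alpha'$ and $\beta'$ map into $[x,y']$ and $[1,y']$, which by the extraction of the maximal simple triangles meet $[1,x]$ only in the geodesic stubs at $x$ and at $1$ and therefore contain no edge of $[1,x]$ beyond those already lying on $\gamma'$; and it cannot be an interior edge of $M'$, by \cref{StrebelsTheorem}, which pins down $M'$ closely enough that no interior edge of $M'$ can share an image with the boundary edge $g_0$. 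This exhausts the cases, so $P=P'$, i.e. $\bd F\cap\gamma=\bd F'\cap\gamma'$.

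The step I expect to be the genuine obstacle is this last one: certifying that nothing of $\bd F$ or of $\bd F'$ lying off $\gamma$, $\gamma'$ — in particular no interior edge of $M$ or $M'$ — can be folded by $f$ back onto $[1,x]$. The $\alpha'/\beta'$ exclusions only use how the five geodesics $[1,x],[1,y],[1,y'],[x,y],[x,y']$ overlap, but ruling out interior edges really does require feeding in Strebel's classification of $M$ and $M'$, and is presumably what Figure \ref{FlushFigure} is recording.
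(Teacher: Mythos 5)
Your reduction to a single closed walk $Z$ and the lock-step bijection between the boundary cycles of $F$ and $F'$ is sound, but the proof does not close, and the hole is exactly the one you flagged at the end. To finish, you must show that the edge $g_1$ of $\bd F'$ with $f(g_1)=f(g_0)$ cannot lie on an interior side of $M'$, and this is \emph{not} something \cref{StrebelsTheorem} gives you: Strebel's theorem classifies the combinatorial structure of $M'$ as a plane graph, but says nothing about the injectivity of the combinatorial map $f:M'\to\Gamma(G,S)$. The van Kampen Lemma only guarantees that $f$ is label-preserving and $1$-Lipschitz; van Kampen diagrams routinely fold onto themselves in the Cayley graph, and the paper is explicit that the only injectivity available is on $\beta\cup\beta'\cup\gamma\cup\gamma'$ (it even warns that $f(\alpha')$ may meet $f(\alpha)$ or $f(\beta)$). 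So an interior edge of $M'$ mapping onto an edge of $[1,x]$ is not excluded by anything you have, and your case exhaustion fails. (The subsidiary claims --- that both $P$ and $P'$ cannot run past an end of $K$, and the exclusions for $\alpha'$ and $\beta'$ --- are repairable with some work, but they do not rescue the argument.)

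The fix is to stay inside the diagram $N$ and argue with labels rather than with images in $\Gamma(G,S)$. Write $\bd F\cap\gamma=[p,q]$ and $\bd F'\cap\gamma'=[p',q']$ and look at the first vertex where they disagree, say $\|p\|<\|p'\|$. Let $\sigma$ be the side of $F$ in $N$ at $p'$ not contained in $\bd F'$ (it runs along $[1,x]$ toward $p$), and let $\tau'$ be the side of $F'$ at $p'$ leaving $\gamma'$. Because $F$ and $F'$ cancel, reading the two boundary labels in lock-step forces $\Lab(\sigma)$ to end in some $s$ while $\Lab(\tau')$ begins with $s^{-1}$. But $\sigma*\tau'$ is a subpath either of the boundary of the face of $M'$ on the other side of $\tau'$, or of the geodesic $[1,y']$ (when $p'=a'$ and $\sigma$ lies on $[1,a']$). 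The first option contradicts the cyclic reducedness of $R$, the second contradicts geodesicity of $[1,y']$. This argument needs no injectivity of $f$ anywhere, which is precisely why it succeeds where the image-chasing approach stalls.
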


\begin{proof}
Suppose that $F$ cancels with $F'$, but $\bd F \cap \gamma \neq \bd F' \cap \gamma'$.  Let $\bd F \cap \gamma = [p,q]$ and $\bd F' \cap \gamma' = [p',q']$.  Then either $p \neq p'$ or $q \neq q'$.  Suppose that $\|p\| < \|p'\|$: the other cases are similar.  Let $\sigma$ be the side of $F$ \emph{in $N$} which is incident to $p'$ and is not contained in $\bd F'$.  Let $\tau'$ be the side of $F'$ incident to $p'$ which is not contained in $\gamma'$: see Figure \ref{FlushFigure}.  Then $\sigma * \tau'$ is a subpath of either a face bordering $F'$ or the geodesic $[1,y']$.  Since $F$ and $F'$ cancel, if $\Lab(\sigma)$ ends with a letter $s$, then $\Lab(\tau')$ begins with $s^{-1}$.  Therefore either the boundary label of some face is not freely reduced, or $\Lab([1,y'])$ is not freely reduced.  The former contradicts the fact that $R$ is cyclically reduced, and the latter contradicts that $[1,y']$ is geodesic.
\end{proof}

\begin{lemma}\label{notInternal}
Let $F$ be a face of $M$ such that either $F$ is triangular, $F$ is quadrilateral, or $F$ is pentagonal with only one exterior side.  Then $F$ is not subsumed.
\end{lemma}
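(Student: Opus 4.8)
The plan is to argue by contradiction, using only the $C'(\sfrac16)$ condition (through \cref{SmallCancellationGeometric} and \cref{DehnReduction}) together with the shape of $F$ supplied by Strebel's classification (\cref{StrebelsTheorem}). Suppose that $F$ is one of the three listed types and that some face $F'$ of $M'$ subsumes it. The relation ``$F'$ subsumes $F$'' presupposes that $F$ actually meets $\gamma$ (if $F\cap\gamma=\emptyset$ then $F$ is vacuously not subsumed), so $\sigma:=F\cap\gamma$ is a nonempty exterior side of $F$, and by hypothesis $\sigma\subseteq F'\cap\gamma'$. Hence $\sigma$ is a common subpath of $\bd F$ and $\bd F'$; moreover $F$ and $F'$ are essential faces of the diagram $N$ (since $M$ and $M'$ are minimal, hence bare) and $F$ does not cancel with $F'$. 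Applying \cref{SmallCancellationGeometric} in $N$ therefore gives
$$\ell(\sigma)<\tfrac16\min\bigl(\ell(\bd F),\ell(\bd F')\bigr)\le\tfrac16\ell(\bd F).$$
It thus suffices to prove that in every case $\ell(\sigma)>\tfrac16\ell(\bd F)$.

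For the faces lying in a corner this is a direct computation: every interior side $\iota$ of $F$ has $\ell(\iota)<\tfrac16\ell(\bd F)$ by \cref{SmallCancellationGeometric}. If $F$ is triangular (and not the single face of a one-face diagram) then $F$ is an extremal face with exactly one interior side and one further exterior side $\sigma'$, lying on $\alpha$ or $\beta$; applying the second inequality of \cref{DehnReduction} with distinguished exterior side $\sigma'$ yields $\ell(\sigma)>(\tfrac12-\tfrac16)\ell(\bd F)=\tfrac13\ell(\bd F)$. If $F$ is quadrilateral and lies in a corner then it has two interior sides and one further exterior side $\sigma'$, and the same inequality gives the (still strict) bound $\ell(\sigma)>(\tfrac12-\tfrac26)\ell(\bd F)=\tfrac16\ell(\bd F)$. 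If $F$ is pentagonal with only one exterior side, that side is $\sigma$ and the other four sides are interior, so $\ell(\sigma)=\ell(\bd F)-\sum_\iota\ell(\iota)>\ell(\bd F)-\tfrac46\ell(\bd F)=\tfrac13\ell(\bd F)$. In each case $\ell(\sigma)>\tfrac16\ell(\bd F)$, contradicting the displayed inequality.

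The step I expect to be the main obstacle is the degenerate configuration in which $F$ is a \emph{middle face}, meeting all three of $\alpha$, $\beta$, $\gamma$: this happens when $F=M$ is a single face (then $F$ is triangular but has no interior side and no exterior side confined to a single geodesic), and can happen when $F$ is a quadrilateral with one interior side. In these cases \cref{DehnReduction} no longer isolates $\sigma=F\cap\gamma$ among the exterior sides, and one must instead invoke the geometry fixed in \cref{vKDConstruction}: $\alpha\subseteq[x,y]$ has length less than $\varepsilon r$, while $\ell(\gamma)=d(a,b)=d(a,x)-d(b,x)>(1-\varepsilon)r$, so for small $\varepsilon$ the side $\sigma$ is long while the arc of $\bd F$ on $\alpha$ is short. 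For $F=M$, the triangle inequality $\ell(\beta)\le\ell(\alpha)+\ell(\gamma)$ gives $\ell(\bd F)\le 2\ell(\alpha)+2\ell(\gamma)<4\ell(\gamma)$ once $\varepsilon$ is small, so $\ell(F\cap\gamma)=\ell(\gamma)>\tfrac14\ell(\bd F)>\tfrac16\ell(\bd F)$; for a quadrilateral middle face a similar but longer estimate, combining \cref{SmallCorners} (its arc on $\alpha$ is short) with \cref{DehnReduction} applied to the face sharing its interior side (which forces that face's arc on $\gamma$ to be short, hence $\sigma$ to be long relative to $\ell(\alpha)$), again yields $\ell(\sigma)>\tfrac16\ell(\bd F)$. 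Any $\varepsilon\le\tfrac19$ is comfortably small enough for all these estimates, consistently with the overall strategy of \cref{geodSpanningTreeisTight}.
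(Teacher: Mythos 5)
Your proof is correct and follows essentially the same route as the paper's: apply \cref{DehnReduction} to the exterior side(s) of $F$ other than $\sigma = \bd F \cap \gamma$ to conclude $\ell(\sigma) > \tfrac{1}{6}\ell(\bd F)$, which contradicts \cref{SmallCancellationGeometric} applied in $N$ to $F$ and a subsuming face $F'$. Your final paragraph on triangular or quadrilateral \emph{middle} faces treats a degenerate configuration that the paper's one-line argument silently skips (it tacitly assumes $F$ has at most one exterior side besides $\sigma$), so that extra care is a genuine, if partly sketched, supplement rather than a deviation.
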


\begin{proof}
Let $\sigma = \bd F \cap \gamma$, and let $\tau$ be the other exterior side of $\bd F$ if there is one, either $\tau = \bd F \cap \alpha$ or $\tau = \bd F \cap \beta$.  If $F$ is triangular or quadrilateral, then applying \cref{DehnReduction} to $\tau$ yields that $\ell(\sigma) > \frac{1}{6} \ell(\bd F)$.  If $F$ is pentagonal and has only one exterior side, then $\sigma$ is the only exterior side of $F$, so $\ell (\sigma) > \ell (\bd F) - \frac{4}{6}\ell(\bd F) = \frac{1}{3}\ell(\bd F)$.  In all cases, if $\sigma$ is also a subpath of the boundary of a face $F'$ which does not cancel with $F$, then this contradicts the $C'(\sfrac{1}{6})$ condition.
\end{proof}

\begin{corollary}\label{Bsubsumed}
If a face $F$ of $N$ is subsumed, then either $F$ is the middle face, or $F$ is a pentagonal face with two exterior sides.  In either case $F$ borders a face in $B$, so $h \in \bd F$.
\end{corollary}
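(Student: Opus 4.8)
The plan is to extract the first assertion cheaply from the geometric form of the $C'(\sfrac{1}{6})$ condition together with \cref{notInternal} and Strebel's classification, and then to obtain the "borders $B$" refinement by locating $\bd F\cap\gamma$ inside $\gamma\cap\gamma'$ with the help of \cref{a'notinCb}. First I would reduce to the case that $F$ is a face of $M$ subsumed by a face $F'$ of $M'$, the other case being symmetric; a face of $M$ with $\bd F\cap\gamma=\emptyset$ lies in the $c$-corner $C$ (and can never cancel with a face of $M'$, since $M$ and $M'$ meet only along $\gamma\cap\gamma'$), so it is not a face the notion is meant to apply to, and I assume $\bd F\cap\gamma\neq\emptyset$. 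By definition of subsumed, $F$ and $F'$ do not cancel and $\bd F\cap\gamma\subseteq\bd F'\cap\gamma'\subseteq\bd F'$, so $\bd F\cap\gamma$ is a common subpath of $\bd F$ and $\bd F'$. Since $M$ and $M'$ are minimal they are bare, so $F$ and $F'$ are essential, and \cref{SmallCancellationGeometric} applies: as the two faces do not cancel, $\ell(\bd F\cap\gamma)<\tfrac{1}{6}\ell(\bd F)$. Then \cref{notInternal} forbids $F$ from being triangular, quadrilateral, or pentagonal with one exterior side, and running down the list of face types of a Strebel diagram (the paragraph after \cref{StrebelsTheorem}: extremal triangular or quadrilateral, quadrilateral, pentagonal with one exterior side, pentagonal with two exterior sides, and the unique middle face) leaves only that $F$ is a pentagonal face with two exterior sides or the middle face $D$. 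This is the first assertion.

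Next I would pin down where $F$ sits. Since $\bd F\cap\gamma\subseteq\bd F'\cap\gamma'\subseteq\gamma'$, the arc $\bd F\cap\gamma$ lies in $\gamma\cap\gamma'$, which — viewing $\gamma=[a,b]$ and $\gamma'=[a',b']$ as subpaths of $[1,x]$ with $\|a\|\le\|a'\|$ — is the arc from $a'$ to the $\|\cdot\|$-nearer of $b$ and $b'$. If $F=D$, then $D$ borders the $b$-corner $B$ by the very definition of the middle face, and the vertex $h$ of $\gamma$ at the extremity of $B$ lies on $\bd D$ (if $B=\{b\}$ then $h=b\in\bd D$). If $F$ is a pentagonal face with two exterior sides, then one of them is the nonempty arc $\bd F\cap\gamma\subseteq\gamma$, so $F$ is the cap pentagon of $B$ (exterior sides on $\alpha$ and $\gamma$) or of the $a$-corner $A$ (exterior sides on $\beta$ and $\gamma$). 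The cap pentagon of $B$ borders a face of $B$ outright; for the cap pentagon of $A$, its $\gamma$-side is a subpath of $\gamma_A=\bd A\cap\gamma$, the arc of $\gamma$ adjacent to $a$, and since this subpath must also lie above $a'$, I would invoke \cref{a'notinCb} (which keeps $h$ and $h'$ within about $3\varepsilon r$ of $x$) together with the smallness of the $b$- and $b'$-corners from \cref{SmallCorners} to conclude that $A$ must wrap far enough around the diagram that its cap pentagon is exactly the face of $A$ bordering both $B$ and $C$ named in \cref{SmallCorners}(c); in particular it borders $B$. In every such case $F$ borders a face of $B$, and one then checks, using the position of $h$ and (in Strebel's type IV) the smallness estimates, that $h\in\bd F$.

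The hard part is this last step: excluding subsumed cap pentagons that are not adjacent to $B$ (a cap pentagon of a corner far from $B$), and, in type IV, verifying that the subsumed cap pentagon of $B$ is the one whose boundary actually contains $h$ rather than one of the one-exterior-side pentagons between it and $h$. This requires a careful marriage of the local $C'(\sfrac{1}{6})$ length estimates of \cref{SmallCorners} with the global position data of \cref{a'notinCb}; by contrast, the first assertion of the corollary is an essentially immediate consequence of \cref{SmallCancellationGeometric}, \cref{notInternal}, and Strebel's classification.
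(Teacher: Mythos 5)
Your derivation of the first assertion is correct and is essentially the paper's own (unwritten) argument: \cref{notInternal} eliminates triangular faces, quadrilateral faces, and pentagonal faces with one exterior side, and the inventory of faces of a Strebel diagram given after \cref{StrebelsTheorem} leaves only the middle face and the pentagonal faces with two exterior sides. Two small remarks: the detour through \cref{SmallCancellationGeometric} is redundant, since \cref{notInternal} already encapsulates that estimate; and your parenthetical claim that a face of $M$ with $\bd F\cap\gamma=\emptyset$ must lie in the $c$-corner is false (the one-exterior-side pentagon of $B$ bordering $\alpha$ is a counterexample), though this does not affect the argument since you only use it to set aside the vacuous reading of ``subsumed.''

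The second sentence of the corollary is where your proposal has a genuine gap, and you flag it yourself: the conclusions that $F$ borders a face of $B$ and that $h\in\bd F$ are reached only via ``I would invoke \dots to conclude'' and ``one then checks.'' The difficulty you raise is real. In type III the two-exterior-side pentagon of the $a$-corner is separated from the $b$-corner along $\gamma$ by the middle face $D$, and in type IV by the one-exterior-side pentagon of $A$; for such an $F$ it is simply not true that $F$ borders a face of $B$ or that $h\in\bd F$. So the corollary tacitly requires that this face cannot be subsumed (or that, when it is, the configuration collapses to the adjacent-to-$B$ case), and that is exactly the step your write-up defers. The same issue arises for the claim $h\in\bd F$ when $F$ is the two-exterior-side pentagon of $B$ in type IV, where $B_{k+3}$ lies between $\gamma_{k+1}$ and $h$ along $\gamma$. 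The intended resolution is positional --- $\bd F\cap\gamma$ must sit inside $\gamma\cap\gamma'$, whose endpoints are controlled by the bound $d(a',h)>(1-3\varepsilon)r$ and by \cref{SmallCorners} --- but naming the lemmas that ``should'' do the job is not the same as running the estimates, and without them the second assertion is asserted rather than derived. Since the applications of \cref{Bsubsumed} (in the lemma bounding $\ell(\rho')$ and in \cref{DescribingB}) use precisely the conclusion $h\in\bd F$, this is not a cosmetic omission.
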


In Lemmas \ref{w'Small}-\ref{DescribingB}, $E'$ is the extremal face at $a'$, and
\begin{align*}
\rho' &= \bd E' \cap \gamma'\\
\sigma' &= \bd E' \cap \beta'\\
\tau' &= \bd E' \smallsetminus (\rho' \cup \sigma').
\end{align*}

\begin{lemma}
\label{w'Small}
If $\varepsilon \leq \frac{1}{6}$, then $\ell(\tau') < \frac{1}{6}\ell(\bd E')$.
\end{lemma}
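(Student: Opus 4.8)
The plan is to read off the combinatorial shape of $E'$ from Strebel's classification, dispose of the generic shape using only the $C'(\sfrac16)$ condition, and cover the degenerate shapes using the thinness of $\Delta'$ that the hypothesis $\varepsilon \le \sfrac16$ forces.

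First I would record the metric estimates for $\Delta'$ coming from \cref{vKDConstruction}. Since $\alpha'$ is a subpath of the geodesic $[x,y']$ and $d(x,y')<\varepsilon r$, we have $\ell(\alpha')<\varepsilon r$; since $b'$ lies on $[1,x]$ between $a'$ and $x$ with $d(a',x)>r$ and $d(b',x)<\varepsilon r$, we have $\ell(\gamma')=d(a',b')=d(a',x)-d(b',x)>(1-\varepsilon)r$; and since the common suffix of $[1,y']$ and $[x,y']$ has length at most $d(x,y')<\varepsilon r$ while $d(a',y')>(1-\varepsilon)r$, we have $\ell(\beta')=d(a',c')>(1-2\varepsilon)r$. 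Hence $\ell(\beta')+\ell(\gamma')>(2-3\varepsilon)r\ge 5\varepsilon r>5\ell(\alpha')$ whenever $\varepsilon\le\sfrac16$ (indeed whenever $\varepsilon<\sfrac14$). In particular, in the degenerate case $M'=E'$ one has $\tau'=\alpha'$, and the claim reduces directly to $5\ell(\alpha')<\ell(\rho')+\ell(\sigma')=\ell(\gamma')+\ell(\beta')$, which we have just verified.

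Next, by \cref{StrebelsTheorem} and the description of corners preceding \cref{SmallCorners}, $E'$ is triangular or (only in types IV and V) quadrilateral, its two sides at $a'$ are the geodesic arcs $\rho'=\bd E'\cap\gamma'$ and $\sigma'=\bd E'\cap\beta'$, and $\tau'=\bd E'\smallsetminus(\rho'\cup\sigma')$ completes the partition $\ell(\bd E')=\ell(\rho')+\ell(\sigma')+\ell(\tau')$. In the generic case $E'$ is triangular and meets neither $\alpha'$, so $\tau'$ is a single interior side of $E'$ in $M'$; since $M'$ is minimal it is reduced, so $\tau'$ is shared with a face of $M'$ that does not cancel with $E'$, and \cref{SmallCancellationGeometric} gives $\ell(\tau')<\sfrac16\,\ell(\bd E')$ with no hypothesis on $\varepsilon$ needed at all. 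In every remaining configuration — $E'$ quadrilateral, or $E'$ carrying a boundary arc on $\alpha'$, which forces $M'$ to be degenerate enough that $\rho'$ and $\sigma'$ together exhaust almost all of $\gamma'$ and $\beta'$ — I would bound $\ell(\tau')$ by the length of at most one interior side of $E'$ (controlled by \cref{SmallCancellationGeometric}) plus $\ell(\bd E'\cap\alpha')\le\ell(\alpha')<\varepsilon r$, and play this against the identity $\ell(\bd E')=\ell(\rho')+\ell(\sigma')+\ell(\tau')$ together with the fact that $\ell(\rho')+\ell(\sigma')$ is within $O(\varepsilon r)$ of $\ell(\beta')+\ell(\gamma')>(2-3\varepsilon)r$.

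The step I expect to be the main obstacle is precisely this last, degenerate, part: one must run through Strebel's four families and check that whenever $E'$ is not simply a triangle with a single interior side, the two exterior sides $\rho'$ and $\sigma'$ really are long — comparable to $\gamma'$ and $\beta'$ — so that the smallness of $\alpha'$ guaranteed by $\varepsilon\le\sfrac16$ absorbs the extra pieces folded into $\tau'$. The generic triangular case is immediate.
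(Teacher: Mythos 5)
Your triangular case (where $\tau'$ is a single interior side and \cref{SmallCancellationGeometric} finishes immediately) and your single-face computation both agree with the paper. The gap is in the remaining degenerate cases --- exactly the ones you flag as the main obstacle. When $E'$ is the quadrilateral face bordering both $B'$ and $C'$, or the middle face of $M'$, the path $\tau'$ consists of \emph{two} interior sides (plus, for the middle face, an arc of $\alpha'$), not ``at most one.'' So the best a per-side application of \cref{SmallCancellationGeometric} gives is $\ell(\tau') < \tfrac{1}{3}\ell(\bd E') + \varepsilon r$; and even the one-sided version $\ell(\tau') < \tfrac{1}{6}\ell(\bd E') + \varepsilon r$ cannot be converted into the strict conclusion. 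Indeed, the target $\ell(\tau') < \tfrac{1}{6}\ell(\bd E')$ is equivalent to $\ell(\tau') < \tfrac{1}{5}\bigl(\ell(\rho')+\ell(\sigma')\bigr)$, whereas substituting your bounds into $\ell(\bd E') = \ell(\rho')+\ell(\sigma')+\ell(\tau')$ yields only $\ell(\tau') < \tfrac{1}{2}\bigl(\ell(\rho')+\ell(\sigma')\bigr) + \tfrac{3}{2}\varepsilon r$ (two interior sides) or $\ell(\tau') < \tfrac{1}{5}\bigl(\ell(\rho')+\ell(\sigma')\bigr) + \tfrac{6}{5}\varepsilon r$ (one interior side). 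The second is at the threshold with a positive additive error, and the largeness of $\ell(\rho')+\ell(\sigma')$ cannot absorb an additive $\varepsilon r$ sitting on top of a multiplicative bound that is already exactly $\tfrac{1}{5}$.

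The paper's proof avoids any interior-side estimate relative to $\ell(\bd E')$ in these cases. Part (c) of \cref{SmallCorners} bounds the \emph{entire} path $\tau'$ --- both interior sides together, plus any arc on $\alpha'$ --- by $\ell(\alpha') < \varepsilon r$, with no $\ell(\bd E')$ term at all. Separately, since in these configurations $E'$ contains both $a'$ and the vertex $h'$, one has $\ell(\bd E') \geq 2\ell(\rho') \geq 2d(a',h') > 2(1-3\varepsilon)r$ by \cref{DehnReduction} and \cref{a'notinCb}. Dividing gives $\ell(\tau')/\ell(\bd E') < \varepsilon/(2-6\varepsilon)$, which is at most $\tfrac{1}{6}$ precisely when $\varepsilon \leq \tfrac{1}{6}$ --- this is where the hypothesis of the lemma comes from. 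So the missing ingredient is a bound on all of $\tau'$ that is linear in $\varepsilon r$ alone, i.e.\ \cref{SmallCorners}(c), rather than the $C'(\sfrac{1}{6})$ condition applied side by side.
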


\begin{figure}[h!]
\centering
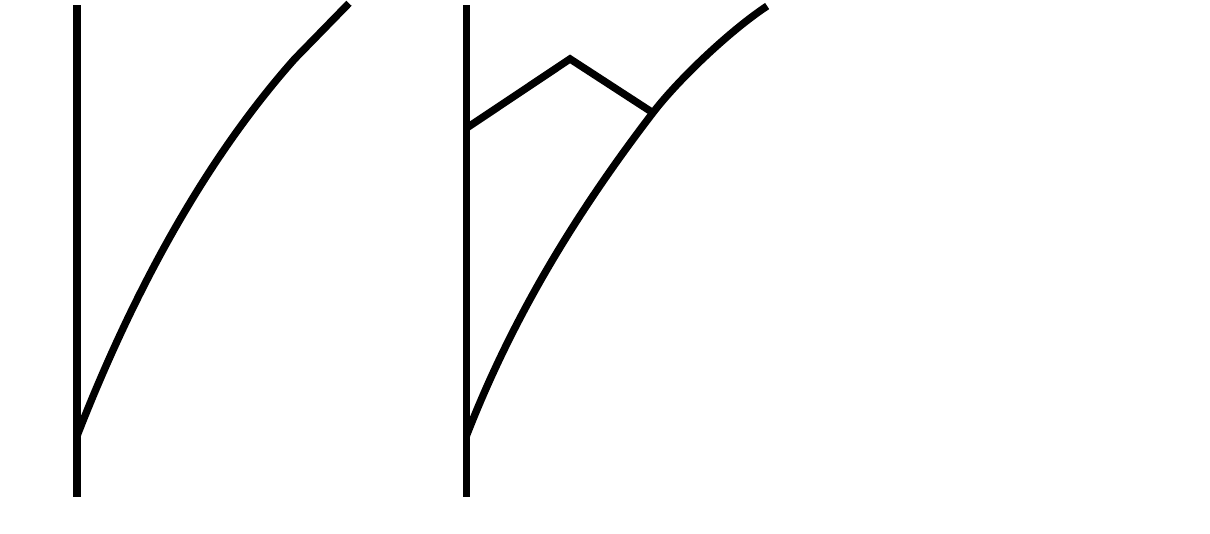
\caption{}
\label{w'SmallFigure}
\end{figure}

\begin{proof}
There are three cases to consider: either $E'$ is triangular (Case 1 in Figure \ref{w'SmallFigure}), $E'$ is quadrilateral (Case 2), or $A'$ contains no faces and $E'$ is the middle face (Case 3).  In Case 1, $\tau'$ is an interior side and the result is immediate.  In Cases 2 and 3, we may apply \cref{SmallCorners} to get that $\ell(\tau')<\ell(\alpha') < \varepsilon r$.  Also, in these cases $E'$ borders $B'$, so $h \in E'$.  Since $a' \in E'$ by definition, $[a',h'] \subseteq \ell(\rho')$ and so $\ell(\bd E')> 2\ell(\rho') \geq 2d(a',h')>2(1-3\varepsilon)r$.  Therefore $\ell(\tau')<\frac{\varepsilon r}{2(1-3\varepsilon)r}\ell(\bd E') = \frac{\varepsilon}{2-6\varepsilon}\ell(\bd E')$.  Solving $\frac{\varepsilon}{2-6\varepsilon} \leq \frac{1}{6}$ yields $\varepsilon \leq \frac{1}{6}$.
\end{proof}

From now on (that is, \cref{MorethanaThirdD'}-\cref{randomCor}), it is assumed that $\varepsilon \leq \frac{1}{6}$, and this will not be restated in the hypotheses.  Knowing \cref{w'Small}, we may then apply \cref{DehnReduction} to $\rho'$ and $\sigma'$ in turn to obtain the following.
\begin{corollary}\label{MorethanaThirdD'}
$\ell(\rho') > \frac{1}{3}\ell(\bd E')$ and $ \ell(\sigma') > \frac{1}{3}\ell(\bd E')$.
\end{corollary}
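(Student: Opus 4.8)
The plan is to feed the bound $\ell(\tau') < \tfrac16\ell(\bd E')$ from \cref{w'Small} into \cref{DehnReduction}, applied to $E'$ twice: once with the exterior side $\rho'$ playing the role of $\sigma$, and once with $\sigma'$ in that role.

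First I would record that both $\rho' = \bd E' \cap \gamma'$ and $\sigma' = \bd E' \cap \beta'$ are genuine exterior sides of $E'$. This is clear in Cases 1 and 2 of \cref{w'Small}, since $a'$ is the corner of $\Delta'$ between $\beta'$ and $\gamma'$ and $E'$ is the extremal face at $a'$; and in Case 3, where $E'$ is the middle face $D'$, it meets all three of $\alpha', \beta', \gamma'$ in exterior sides. The remaining sides of $\bd E'$ — one side if $E'$ is triangular, more otherwise — together constitute $\tau' = \bd E' \smallsetminus(\rho'\cup\sigma')$, a path of length $\ell(\tau')$.

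Next, applying \cref{DehnReduction} to $E'$ with $\rho'$ in the role of $\sigma$, and noting that the sides of $E'$ other than $\rho'$ are exactly $\sigma'$ together with the sides making up $\tau'$, I obtain $\ell(\sigma') + \ell(\tau') \geq \tfrac12\ell(\bd E')$. Since we are assuming $\varepsilon \leq \tfrac16$, \cref{w'Small} gives $\ell(\tau') < \tfrac16\ell(\bd E')$, so $\ell(\sigma') > \tfrac12\ell(\bd E') - \tfrac16\ell(\bd E') = \tfrac13\ell(\bd E')$. The symmetric application, with $\sigma'$ in the role of $\sigma$, gives $\ell(\rho') + \ell(\tau') \geq \tfrac12\ell(\bd E')$ and hence $\ell(\rho') > \tfrac13\ell(\bd E')$.

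There is essentially no obstacle here: the only care needed is in verifying that $\rho'$ and $\sigma'$ are exterior sides in all three cases of \cref{w'Small}, and in observing that when $\tau'$ decomposes into several sides, the sum in \cref{DehnReduction} over all sides other than $\sigma$ still captures their full combined length, so both displayed inequalities hold exactly as written.
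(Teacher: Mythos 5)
Your proposal is correct and matches the paper's own (very brief) justification: the paper likewise obtains the corollary by applying \cref{DehnReduction} to $E'$ with $\rho'$ and then $\sigma'$ as the distinguished exterior side, and subtracting the bound $\ell(\tau') < \tfrac16\ell(\bd E')$ from \cref{w'Small}. Your extra care in checking that $\rho'$ and $\sigma'$ are exterior sides in all three cases is a reasonable elaboration of what the paper leaves implicit.
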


\begin{lemma}
$E'$ does not cancel.
\end{lemma}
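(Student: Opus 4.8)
The plan is to argue by contradiction: suppose $E'$ cancels with a face $F$ of $M$. By \cref{Flush}, $\bd F \cap \gamma = \bd E' \cap \gamma' = \rho'$. Two faces that cancel have mutually inverse boundary labels, so $\ell(\bd F)=\ell(\bd E')$; together with \cref{MorethanaThirdD'} this gives $\ell(\rho')>\tfrac{1}{3}\ell(\bd F)$ and $\ell(\sigma')>\tfrac{1}{3}\ell(\bd F)$. Furthermore the cancellation identifies $\bd F$ with $\bd E'$ (reversing orientation), so — since $f$ preserves labels and the shared edge supplies a common basepoint — $f$ carries $\bd F$ and $\bd E'$ onto the same closed path in $\Gamma(G,S)$. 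Under this identification $\rho'\subseteq\bd F$ corresponds to $\rho'\subseteq\bd E'$ and the complementary arc $\bd F\setminus\rho'$ corresponds to $\sigma'\cup\tau'$. In particular the subpath $w$ of $\bd F$ matched to $\sigma'$ emanates, away from $\gamma$, from the vertex $v\in\gamma$ with $f(v)=a'$ (the endpoint of $\rho'$ meeting $\sigma'$ in $\bd E'$), and satisfies $\ell(w)=\ell(\sigma')>\tfrac{1}{3}\ell(\bd F)$ and $f(w)=\sigma'\subseteq\beta'$.

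Next I would locate $w$ inside $F$ and extract a contradiction. Since $w$ is geodesic (a geodesic word read backwards) and $\ell(w)>\tfrac{1}{6}\ell(\bd F)$, it is not contained in a single interior side of $F$, each of which has length $<\tfrac{1}{6}\ell(\bd F)$ by the $C'(\sfrac{1}{6})$ condition; hence $w$ contains an edge $e$ of an exterior side of $F$ other than $\rho'$, lying on $\alpha$ or on $\beta$. If $e\subseteq\beta$, then $f(e)\subseteq f(\beta)$ and also $f(e)\subseteq f(w)=\sigma'\subseteq f(\beta')$; but $\beta$ and $\beta'$ are disjoint in $N$ (meeting at most at $a=a'$), so by injectivity of $f$ on $\beta\cup\beta'$ the set $f(\beta)\cap f(\beta')$ is at most one vertex, which is impossible for an edge. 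If $e\subseteq\alpha$, then $F$ borders $\alpha$, so $F$ is not an $a$-corner face; a $b$-corner face would have $\bd F\cap\gamma\subseteq\gamma_B$, forcing $a'\in\gamma_B$ contrary to \cref{a'notinCb}, so $F$ must be the middle face $D$ and $\rho'=\gamma\cap D$ — a case to be excluded using the planar position of $\Delta$ relative to $\beta'$ described below. In every case we contradict the hypothesis, so $E'$ does not cancel.

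I expect the delicate point to be the case $e\subseteq\alpha$, and, relatedly, the verification that $w$ genuinely contains an edge of an exterior side — which must be checked against Strebel's classification, in particular against a pentagonal face bordering $\gamma$ with four interior sides, whose interior sides have combined length up to $\tfrac{2}{3}\ell(\bd F)$ and could a priori contain all of $w$. Here a counting argument no longer suffices and one needs a planar one: $f(M)$ is confined to the region of $\mathbb{R}^2$ bounded by $\Delta$, whereas near $a'$ the path $\beta'$ leaves that region on the side opposite to $M$; using this, together with the injectivity of $f$ on $\beta\cup\beta'\cup\gamma\cup\gamma'$, the smallness of $\alpha$ (since $d(x,y)<\varepsilon r$ forces $\ell(\alpha)=O(\varepsilon r)$), and the smallness of the $b$-corner (\cref{a'notinCb}, \cref{SmallCorners}), one concludes that no arc of $\bd F$ — whether on $\alpha$ or interior to $M$ — can have image the long geodesic $\sigma'\subseteq\beta'$. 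Making this separation argument precise is the heart of the proof.
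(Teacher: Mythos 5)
Your skeleton is the right one and shares the paper's key ingredients: argue by contradiction, use \cref{Flush} to align $\bd F\cap\gamma$ with $\rho'$, use the label-matching that cancellation provides to transport a subpath of $\bd E'$ lying on $\beta'$ to a subpath of $\bd F$ based at $a'$, and contradict the injectivity of $f$ on $\beta\cup\beta'$. Your subcase in which the transported path meets $\beta$ is essentially the paper's punchline.

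The gap is the one you flag yourself, and it is genuine: nothing you prove forces the arc $w$ of $\bd F$ matched to $\sigma'$ to contain an edge of $\alpha$ or $\beta$. The bound $\ell(w)>\tfrac{1}{3}\ell(\bd F)$ only excludes $w$ from lying in a \emph{single} interior side; a pentagonal face has four interior sides of combined length possibly close to $\tfrac{2}{3}\ell(\bd F)$, which could a priori absorb all of $w$, and the $e\subseteq\alpha$ subcase is likewise left open. The repair you propose cannot work as stated: $f$ maps $N$ into $\Gamma(G,S)$, not into $\mathbb R^2$, so there is no ``region bounded by $\Delta$'' confining $f(M)$ --- indeed the construction explicitly warns that $f$ need not be injective on $\bd M\cup\bd M'$ (e.g.\ $f(\alpha')$ may meet $f(\beta)$); the only injectivity available is on $\beta\cup\beta'\cup\gamma\cup\gamma'$, and it comes from $T$ being a tree, not from planarity. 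The paper avoids the whole issue by not tracking $w$ at all. Since $a'$ is an endpoint of $\bd F\cap\gamma$ and $a'\notin\gamma_B$ by \cref{a'notinCb}, $F$ is not a face of $B$; Strebel's classification then supplies a single side $\theta$ of $F$ running from $a'$ to a point $p\in\beta$ (or, when $a=a'$ and $F$ is the extremal face, a single edge of $\beta$ at $a'$). Because $\theta$ is short --- an interior side has length less than $\tfrac{1}{6}\ell(\bd F)=\tfrac{1}{6}\ell(\bd E')$ --- while $\ell(\sigma')>\tfrac{1}{3}\ell(\bd E')$ by \cref{MorethanaThirdD'}, its partner $\theta'$ in $\bd E'$ stays inside $\sigma'\subseteq\beta'$, and $f(p)=f(p')$ with $p\in\beta$, $p'\in\beta'$ yields the contradiction with no need to control the rest of $\bd F$. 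If you want to salvage your version, you must replace the planar separation argument by an appeal to this structural fact about which faces of $M\smallsetminus B$ can meet $\gamma$ at $a'$.
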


\begin{proof}
Suppose that $E'$ cancels with some face $F$ of $M$.  Since $F \cap \gamma = E' \cap \gamma'$ by \cref{Flush} and $a' \not \in B$, we have that $F$ is not a face of $B$.  Therefore $F$ borders $\beta$.  Now there are two cases.  Either there is an interior side of $F$ which is incident to both $\gamma$ and $\beta$ (Case 1 in Figure \ref{D'notCancelFigure}), or $F$ is the extremal face at $a$ and $a = a'$ (Case 2).  In Case 1, let $\theta$ be the side of $F$ incident to $a'$ and $\beta$.  In Case 2, let $\theta$ be the edge of $\bd F \cap \beta$ which is incident to $a'$.

\begin{figure}[h!]
\centering
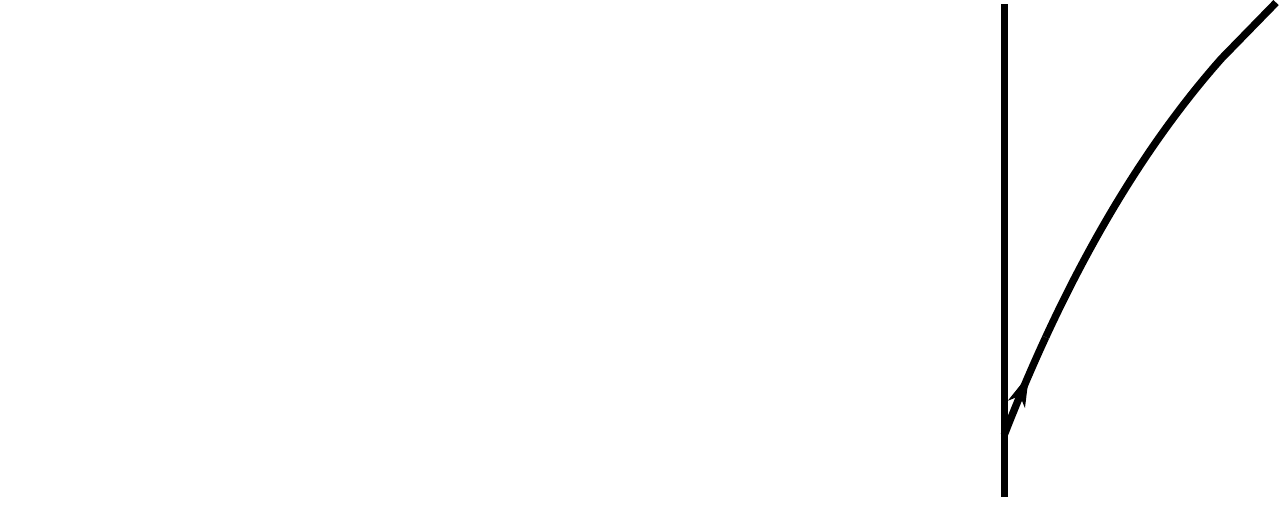
\caption{}
\label{D'notCancelFigure}
\end{figure}

In either case, let $\theta'$ be the path starting from $a'$ which is a subpath of $\bd E'$ and has label $\Lab(\theta)$.  By \cref{MorethanaThirdD'}, $\theta'$ is a subpath of $\sigma'$. Let $p, p'$ be the endpoints of $\theta,\theta'$, respectively.  Since $\Lab(\theta) = \Lab(\theta')$ and the combinatorial map $f$ is label-preserving, $f(p) = f(p')$. Note that $p \in \beta$ by definition, and $p' \in \sigma' \subseteq \beta'$.  Since $T$ is a tree, $f$ is injective when restricted to $\beta \cup \beta'$, so this is a contradiction.
\end{proof}

\begin{lemma}
$\ell (\rho') > (1-3\varepsilon) r$.
\end{lemma}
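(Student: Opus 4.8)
The plan is to split along the same trichotomy used in the proof of \cref{w'Small}: either $E'$ is triangular (Case 1), $E'$ is quadrilateral (Case 2), or the $a'$-corner $A'$ contains no faces and $E'$ is the middle face (Case 3).

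Cases 2 and 3 are immediate from what was already shown. In those cases the proof of \cref{w'Small} establishes that $E'$ borders the $b'$-corner $B'$, so the vertex $h'$ (the endpoint of $\gamma_{B'}'$ nearest $a'$) lies on $\bd E'$; since $a'\in\bd E'$ as well, the entire subpath $[a',h']$ of $\gamma'$ is contained in $\rho'=\bd E'\cap\gamma'$. Hence $\ell(\rho')\geq d(a',h')$, and \cref{a'notinCb} gives $d(a',h')>(1-3\varepsilon)r$, so $\ell(\rho')>(1-3\varepsilon)r$. The same argument applies in Case 1 whenever $E'$ happens to border $B'$.

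It remains to treat Case 1 when $E'$ does \emph{not} border $B'$. Write $\bd E'=\rho'\cup\sigma'\cup\tau'$ with $\tau'$ the unique interior side, and let $p'\in\gamma'$, $q'\in\beta'$ be the endpoints of $\rho'$, $\sigma'$ other than $a'$, so $\tau'$ joins $p'$ to $q'$. The face $F$ on the other side of $\tau'$ meets $\gamma'$ at $p'$ and $\beta'$ at $q'$, so it cannot be a face of $B'$ or of $C'$; hence $F$ lies in $A'$ or $F=D'$, and in either case there are faces of $M'$ strictly between $E'$ and $h'$ along $\gamma'$. The plan is to bound the total length of the part of $[a',h']$ lying outside $\rho'$, i.e. the piece of $\gamma'$ occupied by those intervening faces. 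The inputs I would use are: $\ell(\alpha')\leq\ell([x,y'])<\varepsilon r$; \cref{SmallCorners} applied to the $a'$-corner of $M'$, together with its part~(c) applied to $D'$ (or to the pentagonal face of $A'$ bordering $B'$ and $C'$), which forces the interior boundaries and $\alpha'$-sides of the intervening faces to be controlled by $\ell(\alpha')$; the comparabilities $\ell(\rho'),\ell(\sigma')>\tfrac13\ell(\bd E')$ and $\ell(\tau')<\tfrac16\ell(\bd E')$ of \cref{MorethanaThirdD'} and \cref{w'Small}; and \cref{DehnReduction} applied to the exterior sides of the intervening faces. These should combine to show that $\gamma'\cap[a',h']$ outside $\rho'$ has length $O(\varepsilon r)$, whence $\ell(\rho')>d(a',h')-O(\varepsilon r)$; since $d(a',h')>(1-3\varepsilon)r$ and $\varepsilon\leq\tfrac16$ was chosen freely, the desired bound $\ell(\rho')>(1-3\varepsilon)r$ follows after absorbing the error (possibly shrinking the admissible range of $\varepsilon$ once more).

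The main obstacle is precisely this last sub-case: ruling out that additional corner faces (or a middle face) sitting between the extremal face $E'$ and the start $h'$ of the $b'$-corner eat up a definite fraction of $\gamma'$. This is the one point where the ``thinness'' of $\Delta'$, encoded in the estimate $\ell(\alpha')<\varepsilon r$, must genuinely be exploited through \cref{SmallCorners}, rather than through a routine triangle-inequality computation, and so it is where the bookkeeping will be most delicate.
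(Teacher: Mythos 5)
Your Cases 2 and 3 (and Case 1 when $E'$ happens to border $B'$) are fine --- they reproduce what is already established in the proof of \cref{w'Small}, namely that $h'\in\bd E'$ and hence $[a',h']\subseteq\rho'$, after which \cref{a'notinCb} finishes. But the entire content of the lemma lives in the remaining sub-case, and there your strategy has a genuine gap: it is not true that the part of $[a',h']$ not covered by $\rho'$ can be bounded by $O(\varepsilon r)$ using only the internal geometry of $M'$. The $a'$-corner may contain a string of quadrilateral faces between $E'$ and the rest of the diagram, and nothing in \cref{SmallCorners} or \cref{DehnReduction} bounds their size: \cref{SmallCorners}(a)--(b) only control the $b'$- and $c'$-corners in terms of $\ell(\alpha')$ and assert comparability of the two exterior boundaries of a corner, while part (c) bounds $\bd D'\cap(\iota_{B'}\cup\alpha'_{D'}\cup\iota_{C'})$, \emph{not} $\bd D'\cap\gamma'$. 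Since the presentation may be infinite, relators can be arbitrarily long, so a single intervening face of $A'$ or the middle face $D'$ can occupy a length-$\Theta(r)$ portion of $\gamma'$ regardless of how small $\varepsilon$ is. The ``thinness'' $\ell(\alpha')<\varepsilon r$ squeezes the triangle near $b'$ and $c'$, but places no constraint on the seam emanating from $a'$.

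The missing idea is that the estimate must come from the \emph{other} diagram $M$, across the common geodesic $[1,x]$. The paper's dichotomy is not on the combinatorial type of $E'$ but on whether $\rho'$ extends beyond $\gamma=[a,b]$ or is a subpath of it. In the first case a direct distance computation gives $\ell(\rho')\geq d(a',b)>(1-\varepsilon)r$. In the second, the preceding lemma (that $E'$ cancels with no face of $M$) together with the $C'(\sfrac{1}{6})$ condition forces each face of $M$ meeting $\rho'$ to cover less than $\tfrac16\ell(\bd E')$; since $\ell(\rho')>\tfrac13\ell(\bd E')$ by \cref{MorethanaThirdD'}, at least three faces of $M$ meet $\rho'$, so $E'$ subsumes one of them, and \cref{Bsubsumed} forces that subsumed face to contain $h$. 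Hence $h\in\rho'$ and $\ell(\rho')\geq d(a',h)>(1-3\varepsilon)r$ by \cref{a'notinCb}. Without invoking this cross-diagram cancellation/subsumption mechanism, the bound you want in your hard sub-case is simply false as an internal statement about $M'$.
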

\begin{proof}
Since $\|a'\| \geq \|a\|$, we have that either $\rho'$ is a subpath of $\gamma$ or $\rho'$ extends beyond $\gamma$.  If the latter is the case, then $a', b \in \rho'$, so $[a, b'] \subset \rho'$ and $\ell(\rho') \geq d(a',b) > (1-\varepsilon) r > (1 - 3\varepsilon) r$.

Suppose then that $\rho'$ is a subpath of $\gamma$. Since $E'$ does not cancel, the $C'(\sfrac{1}{6})$ condition implies that each face of $M$ bordering $E'$ must cover less than one sixth of $\bd E'$.  Recall that $\ell(\rho') > \frac{1}{3}\ell(\bd E')$ by \cref{MorethanaThirdD'}.  Therefore $E'$ must border at least three faces of $M$, so $E'$ subsumes some face $F$.  Since $F$ is subsumed, $(\bd F \cap \gamma) \subseteq \rho'$ and $h \in (\bd F \cap \gamma)$ by \cref{Bsubsumed}.  Since $\rho'$ contains $a'$ as well, we have that $\ell(\rho') \geq d(a',h) > (1-3\varepsilon)r$.
\end{proof}

\begin{corollary}\label{randomCor}
$\ell(\rho' \cap [a',h]) > \frac{1-6\varepsilon}{3-9\varepsilon}\ell(\bd E')$. 
\end{corollary}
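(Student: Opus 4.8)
The plan is to reduce the estimate to three facts already at hand: $\ell(\rho')>\tfrac{1}{3}\ell(\bd E')$ from \cref{MorethanaThirdD'}, the bound $\ell(\rho')>(1-3\varepsilon)r$ from the preceding lemma, and a sharp control on how far the side $\rho'$ can overhang the vertex $h$, which I would extract from the same geometry used to prove \cref{a'notinCb}.

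First I would straighten out the picture along the geodesic $[1,x]$. Both $\gamma=[a,b]$ and $\gamma'=[a',b']$ are subpaths of $[1,x]$, so their points may be compared by word norm, the norm increasing from $a$ (resp.\ $a'$) toward $x$. Since $\ell(\rho')>0$ and $\rho'=\bd E'\cap\gamma'\subseteq\gamma'$, the side $\rho'$ is a nontrivial arc $[a',q']$ issuing from $a'$ toward $x$ with $\|q'\|\le\|b'\|\le\|x\|$. I would then observe that $a'$ lies strictly between $a$ and $h$ on $[1,x]$: were $\|a'\|\ge\|h\|$, then (as $\|a'\|<\|b\|$) we would have $a'\in(h,b)$ and hence $d(a',h)<\ell(\gamma_B)$, whereas $\ell(\gamma_B)\le 3\ell(\alpha_B)\le 3\ell(\alpha)<3\varepsilon r$ by \cref{SmallCorners}(b) together with the fact that $\alpha=[b,c]$ is a subpath of $[x,y]$ (so $d(b,x)+\ell(\alpha)\le\ell([x,y])=d(x,y)<\varepsilon r$), while \cref{a'notinCb} gives $d(a',h)>(1-3\varepsilon)r\ge 3\varepsilon r$ since $\varepsilon\le\tfrac{1}{6}$ --- a contradiction. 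Consequently $\rho'\cap[a',h]$ is the initial arc of $\rho'$ of length $\min(\ell(\rho'),d(a',h))$, while the part of $\rho'$ lying beyond $h$ is confined to the subarc of $[1,x]$ from $h$ to $x$ and so has length at most $d(h,x)\le\ell(\gamma_B)+d(b,x)\le 3\ell(\alpha_B)+d(b,x)\le 3\bigl(\ell(\alpha)+d(b,x)\bigr)<3\varepsilon r$. Hence in every case $\ell(\rho'\cap[a',h])>\ell(\rho')-3\varepsilon r$.

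What remains is arithmetic. From $\ell(\rho')>(1-3\varepsilon)r$ one has $3\varepsilon r<\tfrac{3\varepsilon}{1-3\varepsilon}\ell(\rho')$, so
$$\ell(\rho'\cap[a',h]) > \ell(\rho') - \tfrac{3\varepsilon}{1-3\varepsilon}\ell(\rho') = \tfrac{1-6\varepsilon}{1-3\varepsilon}\,\ell(\rho');$$
since $\varepsilon\le\tfrac{1}{6}$ makes $\tfrac{1-6\varepsilon}{1-3\varepsilon}\ge 0$, multiplying $\ell(\rho')>\tfrac{1}{3}\ell(\bd E')$ from \cref{MorethanaThirdD'} by this factor gives $\ell(\rho'\cap[a',h])>\tfrac{1-6\varepsilon}{3-9\varepsilon}\ell(\bd E')$, which is the claim.

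I expect the only real subtlety to be the middle step: recognizing $\rho'\cap[a',h]$ as an initial subarc of $\rho'$, and --- crucially --- squeezing the overhang $\ell(\rho')-\ell(\rho'\cap[a',h])$ below exactly $3\varepsilon r$. A coarser estimate --- say, routing the overhang past $b$ and then out to $b'$, which would cost roughly $\ell(\gamma_B)+d(b,b')$ and give $4\varepsilon r$ or $5\varepsilon r$ --- degrades the final coefficient below $\tfrac{1-6\varepsilon}{3-9\varepsilon}$; the point is that $q'$ already lies within $\ell(\gamma_B)+d(b,x)<3\varepsilon r$ of $h$, because $q'\in[a',b']$ and $b'$ is within $\varepsilon r$ of $x$. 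After that estimate, the conclusion is the two-line chain above.
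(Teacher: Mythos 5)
Your argument is correct and follows essentially the same route as the paper: bound the portion of $\rho'$ lying beyond $h$ by $\ell([h,x])<3\varepsilon r$, convert this to the fraction $\tfrac{3\varepsilon}{1-3\varepsilon}\ell(\rho')$ using $\ell(\rho')>(1-3\varepsilon)r$, and finish with $\ell(\rho')>\tfrac{1}{3}\ell(\bd E')$ from \cref{MorethanaThirdD'}. The only difference is that you spell out explicitly why $a'$ lies between $a$ and $h$ on $[1,x]$, which the paper leaves implicit.
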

\begin{proof}
First, observe that $\ell([h,x]) < 3\varepsilon r$ and $\ell(\rho') > (1-3\varepsilon)r$.  Therefore $\ell([h,x]) < \frac{3\varepsilon}{1-3\varepsilon}\ell(\rho')$, so
$$\ell(\rho \cap [a',h]) = \ell(\rho' \smallsetminus [h,x])\geq \ell(\rho')-\ell([h,x])> \ell(\rho') - \left(\frac{3\varepsilon}{1-3 \varepsilon} \right) \ell(\rho') = \left( \frac{1-6\varepsilon}{1-3\varepsilon} \right)\ell(\rho').$$
By \cref{MorethanaThirdD'}, $\ell(\rho') > \frac{1}{3}\ell(\bd E')$.  Therefore 
$$\ell(\rho' \cap [a',h]) > \left(\frac{1-6\varepsilon}{1-3\varepsilon}\right)\ell(\rho')>\left(\frac{1-6\varepsilon}{3-9\varepsilon} \right) \ell(\bd E').$$
\end{proof}

\begin{lemma}\label{DescribingB}
If $\varepsilon \leq \frac{1}{9}$, there is a face $F$ of $M$ satisfying all of the following conditions.
\begin{enumerate}[label=\normalfont(\alph*)]
\item $F$ is subsumed by $E'$.
\item $F$ is either the middle face of $M$ or the pentagonal face of $A$.
\item $\ell(\bd F \cap \bd E') > \left(\frac{1-6\varepsilon}{3-9\varepsilon}-\frac{1}{6}\right)\ell(\bd E')$.
\item $\ell(\bd F) < 6\varepsilon r$.
\end{enumerate}
\end{lemma}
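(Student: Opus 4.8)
The plan is to find $F$ among the faces of $M$ whose boundary meets $\gamma$ inside $\rho'$. The whole argument runs on one observation: since $E'$ does not cancel, \cref{SmallCancellationGeometric} shows that every face $G$ of $M$ that shares a subpath with $\bd E'$ shares less than $\tfrac16\ell(\bd E')$ of it. First I would check that $\rho'\subseteq\gamma$: were $\rho'$ to run past $b$, it would contain the entire exterior $\gamma$-side of the extremal face of the $b$-corner, making a triangular or quadrilateral face subsumed by $E'$, contrary to \cref{notInternal}; and the only remaining possibility --- that the $b$-corner is empty, so $M$ is of type I-II or is a single face --- is incompatible with $\ell(\rho')>\tfrac13\ell(\bd E')$ from \cref{MorethanaThirdD'}, because then \cref{notInternal} forbids $\rho'$ from containing the entire $\gamma$-side of any face, so $\rho'$ meets at most two such $\gamma$-sides, each contributing less than $\tfrac16\ell(\bd E')$, whence $\ell(\rho')<\tfrac13\ell(\bd E')$. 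Thus $\rho'$ is a geodesic subpath of $\gamma$ of length exceeding $\tfrac13\ell(\bd E')$, covered by $\gamma$-sides of faces of $M$ each contributing less than $\tfrac16\ell(\bd E')$; so at least three faces of $M$ meet $\gamma$ along $\rho'$. Listing them $G_1,\dots,G_\ell$ in order from $a'$, every $G_i$ with $1<i<\ell$ has its entire $\gamma$-side inside $\rho'$ and is therefore subsumed by $E'$. By \cref{notInternal} a subsumed face is the middle face or is pentagonal with two exterior sides, and by \cref{Bsubsumed} it borders a face of the $b$-corner and contains $h$; running through Strebel's classification, the only faces of $M$ with nonempty $\gamma$-side that can meet all these requirements are the middle face $D$ (when $M$ has one) and the pentagonal face of $A$ bordering $B$ and $C$ (when $M$ has none) --- in particular a corner pentagon such as $B_{k+1}$ is excluded, since its being subsumed would force $\rho'$ to also contain the $\gamma$-side of the adjacent pentagonal face of the $b$-corner lying between $B_{k+1}$ and $h$, which is not pentagonal with two exterior sides, contradicting \cref{notInternal}. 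Hence there is exactly one interior face; it is the desired $F$, giving (a) and (b).

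For the two length estimates, write $\gamma_F=\bd F\cap\gamma$ and $\beta_F=\bd F\cap\beta$. Since $F$ is subsumed, $\gamma_F\subseteq\rho'\subseteq\bd E'$, so $\gamma_F$ is a common subpath of $\bd F$ and $\bd E'$ and, as $E'$ does not cancel, $\ell(\gamma_F)<\tfrac16\ell(\bd F)$. Conversely, apply \cref{DehnReduction} to $F$ with the exterior side $\beta_F$: the sides of $F$ other than $\beta_F$ are $\gamma_F$, one interior side (of length less than $\tfrac16\ell(\bd F)$), and the sides meeting $\iota_B\cup\alpha_D\cup\iota_C$, whose total length is less than $\ell(\alpha)<\varepsilon r$ by \cref{SmallCorners}(c) (here $\alpha$ is a subpath of the geodesic $[x,y]$, of length less than $\varepsilon r$). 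Summing, $\ell(\gamma_F)>\tfrac13\ell(\bd F)-\varepsilon r$, and combined with $\ell(\gamma_F)<\tfrac16\ell(\bd F)$ this gives $\ell(\bd F)<6\varepsilon r$, which is (d). For (c), note that $F$ borders the $b$-corner at $h$, so $\gamma_F$ has $h$ as its endpoint nearer $b$; let $h''$ be its other endpoint. Then $[a',h'']=G_1\cap\gamma\cap\rho'$ is a common subpath of $\bd G_1$ and $\bd E'$, so $d(a',h'')<\tfrac16\ell(\bd E')$. Since $\gamma_F\subseteq\rho'$ forces $\rho'$ to reach $h$, we have $\rho'\cap[a',h]=[a',h]$, and \cref{randomCor} gives $d(a',h)>\tfrac{1-6\varepsilon}{3-9\varepsilon}\ell(\bd E')$ (the hypothesis $\varepsilon\le\tfrac19$ is what makes this quantity at least $\tfrac16\ell(\bd E')$). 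As $d(a',h)=d(a',h'')+\ell(\gamma_F)$, we obtain $\ell(\gamma_F)>\bigl(\tfrac{1-6\varepsilon}{3-9\varepsilon}-\tfrac16\bigr)\ell(\bd E')$, and since $\gamma_F\subseteq\bd F\cap\bd E'$ this is (c).

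The step I expect to be the main obstacle is the identification in the first paragraph: showing via Strebel's classification that the interior subsumed face must be the middle face $D$ or the pentagonal face of $A$ bordering $B$ and $C$, and not some corner pentagon with two exterior sides, together with the bookkeeping needed to dispose of the degenerate diagram types. The rest is manipulation of the inequalities supplied by \cref{DehnReduction}, \cref{SmallCancellationGeometric}, \cref{SmallCorners}(c), and \cref{randomCor}.
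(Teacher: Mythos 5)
Your proof is correct and follows essentially the same route as the paper's: the same face count along $\rho'$ using the fact that $E'$ does not cancel, the same appeal to \cref{notInternal} and \cref{Bsubsumed} to identify the subsumed face, and the same manipulations of \cref{DehnReduction}, \cref{SmallCorners}(c) and \cref{randomCor} for parts (c) and (d). The one real difference is bookkeeping: you first prove $\rho'\subseteq\gamma$ and count all faces of $M$ meeting $\rho'$, whereas the paper works only with $\rho'\cap[a',h]$ and the faces of $M\smallsetminus B$ covering it, which is why the paper never has to exclude the $b$-corner's own two-exterior-side pentagon $B_{k+1}$ --- it is ruled out automatically for being in $B$. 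Your exclusion of $B_{k+1}$ via an intervening pentagonal face between it and $h$ only applies in type IV; in types III and V no such face exists, so your claim that there is \emph{exactly} one interior subsumed face is not fully justified. This does not damage the lemma, since only existence is asserted: your $F=G_2$, the second face from $a'$, has its $\gamma$-side contained in $[a',h]$, hence is not in $B$, hence is the middle face or the pentagonal face of $A$, and your arguments for (c) and (d) go through for it unchanged.
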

\begin{proof}
From the previous corollary we know that more than $\frac{1-6\varepsilon}{3-9\varepsilon}$ of $\bd E'$ must be covered by faces which are not in $B$.  Since $E'$ does not cancel, if $\frac{1-6\varepsilon}{3-9\varepsilon} \geq \frac{1}{6}$, then $E'$ subsumes some face $F$ of $M$ which is not in $B$. Solving $\frac{1-6\varepsilon}{3-9\varepsilon} \geq \frac{1}{6}$ yields $\varepsilon \leq \frac{1}{9}$ so choose $\varepsilon \leq \frac{1}{9}$ and part (a) follows.  \cref{notInternal} shows that $F$ can only be the middle face or the pentagonal face of $A$, giving part (b).  Furthermore, no other faces of $M \smallsetminus B$ can be subsumed by $E'$. Now $E'$ subsumes $F$ implies that $\ell(\bd F \cap \bd E') < \frac{1}{6}\ell(\bd E')$, so there is still a subpath of $\rho'$ of length more than $\left(\frac{1-6\varepsilon}{3-9\varepsilon}-\frac{1}{6}\right)\ell(\bd E')$, and thus of positive length, to be covered.  Therefore $E'$ must border one additional face of $M$ which is not contained in $B$.  Call this face $E$.  Since $E$ cannot be subsumed by $E'$, we have that part of $\bd E$ extends beyond $\gamma'$, so $a' \in \bd E$.  Therefore we have the situation depicted in Figure \ref{FinishingArgument}.

\begin{figure}[h!]
\centering
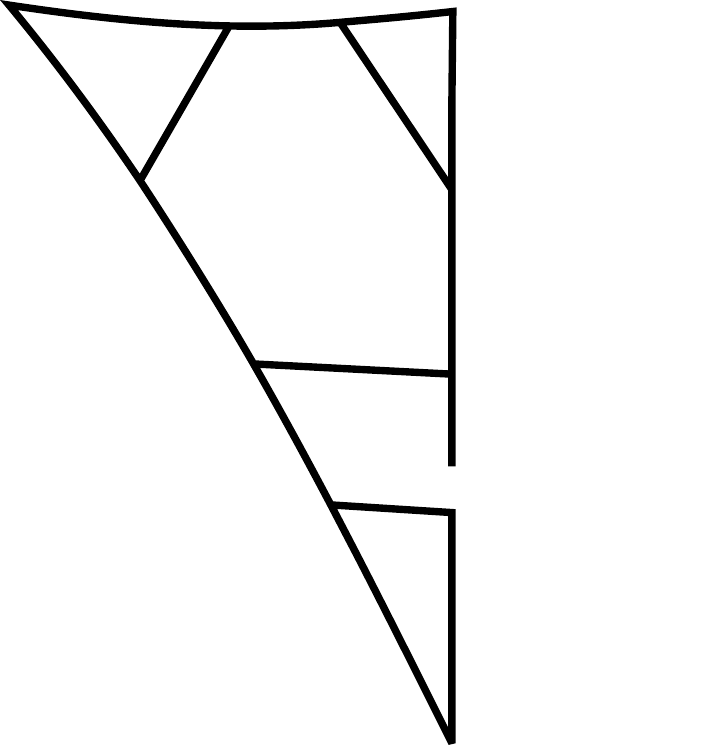
\caption{}
\label{FinishingArgument}
\end{figure}

Notice that $\ell(\bd E \cap \bd E') < \frac{1}{6}\ell(\bd E')$ and $\ell((\bd F \cup \bd E)\cap \bd E') = \ell(\rho' \cap [a',h]) > \frac{1-6\varepsilon}{3-9\varepsilon}\ell(\bd E')$.  Therefore $\ell(\bd F \cap \bd E') > \left(\frac{1-6\varepsilon}{3-9\varepsilon}-\frac{1}{6}\right)\ell(\bd E')$.  This proves part (c).

Let $\rho = \bd F \cap \gamma$ and  $\sigma = \bd F \cap \beta$.  Then $\ell(\rho) < \frac{1}{6}\ell(\bd F)$ since $E'$ subsumes $F$.  Since $F$ is either the middle face or the pentagonal face of $A$, $F$ has exactly one interior side, call it $\tau$, which does not border either $\alpha, B,$ or $C$.  The sum of the lengths of the other sides of $F$ is less than $\ell(\alpha) < \varepsilon r$ by \cref{SmallCorners}. By \cref{DehnReduction} applied to $\sigma$, we have that $\ell(\rho)+\ell(\tau) + \varepsilon r \geq \frac{1}{2}\ell(\bd F)$.  But $\max(\ell(\rho), \ell(\tau)) < \frac{1}{6} \ell(\bd F)$, so we have $\varepsilon r > \frac{1}{6}\ell(\bd F)$, or $\ell(\bd F) < 6\varepsilon r$.  This proves part (d).
\end{proof}

We return to \cref{mainProp}, which we are now ready to prove.
\begin{proof}[Proof of \cref{mainProp}]
Suppose that $T$ is a geodesic spanning tree of $\Gamma(G,S)$ rooted at 1, and $T$ is not $(\varepsilon, 2)$-tight.  If $\varepsilon \leq \frac{1}{9}$, then all of the previous lemmas hold.  But then, in the notation of \cref{DescribingB}, we have 
$$6\varepsilon r > \ell(\bd F) > 6\ell(\bd F \cap \bd E') > 6\left(\tfrac{1}{6}-\tfrac{1-6\varepsilon}{3-9\varepsilon}\right)\ell(\bd E') > \tfrac{9\varepsilon-1}{1-3\varepsilon}2\ell(\rho') = \tfrac{18\varepsilon-2}{1-3\varepsilon}(1-3\varepsilon)r = (18\varepsilon-2)r.$$
Thus $6\varepsilon > 18\varepsilon-2$ or $\varepsilon > \frac{1}{6}$, a contradiction.  Therefore $T$ is $(\sfrac{1}{9}, 2)$-tight.
\end{proof}

Combining this with \cref{TightCombing}, we have the following theorem.

\begin{theorem}\label{mainThm}
If $G$ is a finitely generated $C'(\sfrac{1}{6})$ group, then $\asdimAN(G) \leq 2$.
\end{theorem}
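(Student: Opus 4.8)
The statement is a corollary of \cref{mainProp} and \cref{TightCombing}, so the plan is simply to assemble those two results together with quasi-isometry invariance of $\asdimAN$.

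First I would fix a presentation witnessing the hypothesis. Since $G$ is a finitely generated $C'(\sfrac{1}{6})$ group, choose a finite generating set $S$ and a $C'(\sfrac{1}{6})$ language $R \subseteq S_\circ^*$ with $G = \langle S \mid R \rangle$. Replacing each relator by a cyclically reduced conjugate --- an operation changing neither the normal closure $\llangle R \rrangle$ nor the $C'(\sfrac{1}{6})$ property --- I may assume that $R$ is cyclically reduced, so that $\langle S \mid R \rangle$ satisfies the hypotheses of \cref{mainProp}. Then form the Cayley graph $\Gamma = \Gamma(G,S)$ with the combinatorial metric; it is a connected graph, hence a geodesic metric space, and $G$ with its word metric is isometric to the vertex set of $\Gamma$, in particular quasi-isometric to $\Gamma$.

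Next, by the Zorn's Lemma argument noted after \cref{TightCombing}, $\Gamma$ has a geodesic spanning tree $T$ rooted at $1$. By \cref{mainProp}, $T$ is $(\sfrac{1}{9}, 2)$-tight, which by definition means that the geodesic combing $\{[1,g] \mid g \in V(\Gamma)\}$ it induces on $(\Gamma, 1)$ (as in \cref{GeodSpanningTree}) is a $(\sfrac{1}{9}, 2)$-tight geodesic combing. Applying \cref{TightCombing} with $\varepsilon = \sfrac{1}{9}$ and $k = 2$ then yields $\asdimAN(\Gamma) \leq 2$. Finally, since $\asdimAN$ is a quasi-isometry invariant, $\asdimAN(G) = \asdimAN(\Gamma) \leq 2$, as claimed.

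I do not anticipate any genuine obstacle: all the substantive content --- Strebel's classification of van Kampen diagrams and the tight-geodesic-combing machinery of \cref{Tight} --- is already packaged in \cref{mainProp} and \cref{TightCombing}. The only steps warranting a sentence of justification are the reduction to a cyclically reduced relator set and the identification of $G$ with the vertices of $\Gamma(G,S)$, both standard.
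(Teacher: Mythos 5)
Your proposal is correct and follows exactly the paper's route: the theorem is obtained by combining \cref{mainProp} with \cref{TightCombing} and quasi-isometry invariance of $\asdimAN$. The only addition is your (valid and worthwhile) remark that one may first cyclically reduce the relators so that the hypotheses of \cref{mainProp} are literally satisfied.
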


For infinitely generated groups $G$, it is possible to define $\asdim(G)$ to be the supremum of the asymptotic dimensions of its finitely generated subgroups, although this definition doesn't make sense for Assouad-Nagata dimension: see \cite{Bedlewo}.  Of course languages over infinite alphabets can also satisfy $C'(\sfrac{1}{6})$, so the notion of a $C'(\sfrac{1}{6})$ group extends to infinitely generated groups. Therefore we can also say the following.

\begin{corollary}
If $G$ is a $C'(\sfrac{1}{6})$ group, then $\asdim(G) \leq 2$.
\end{corollary}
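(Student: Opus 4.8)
The plan is to peel everything back to the finitely generated case settled by \cref{mainThm}. For the (possibly infinitely generated) group $G$ we are using the definition $\asdim(G)=\sup\{\asdim(H)\mid H\leqslant G\text{ finitely generated}\}$, so it suffices to bound $\asdim(H)$ for each finitely generated subgroup $H$. Fix a $C'(\sfrac{1}{6})$ presentation $G=\langle S\mid R\rangle$. Given $H$, write a finite generating set of $H$ as words over $S_\circ$; the finitely many letters that occur lie in a finite subset $S_0\subseteq S$, and then $H\leqslant\langle S_0\rangle$. Hence it is enough to prove $\asdim(\langle S_0\rangle)\leq 2$ for every finite $S_0\subseteq S$.

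Two general facts make this reduction legitimate and are used again below. First, $\asdim$ is monotone under passage to finitely generated subgroups: if $K\leqslant L$ with $L$ finitely generated, then the inclusion $(K,d_K)\hookrightarrow(L,d_L)$ is Lipschitz, and since balls in the finitely generated group $L$ are finite it is also metrically proper, hence a coarse embedding, and coarse embeddings do not raise asymptotic dimension; thus $\asdim(K)\leq\asdim(L)$. Second — the step where real work is needed — the subgroup $\langle S_0\rangle\leqslant G$ is again a finitely generated $C'(\sfrac{1}{6})$ group; granting this, \cref{mainThm} yields $\asdim(\langle S_0\rangle)\leq\asdimAN(\langle S_0\rangle)\leq 2$ and we are finished.

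To see that $\langle S_0\rangle$ is $C'(\sfrac{1}{6})$ one cannot simply take the sub-presentation $\langle S_0\mid R_0\rangle$ with $R_0=\{r\in R\mid r\text{ involves only letters of }S_0\}$. That language is $C'(\sfrac{1}{6})$, but the natural map $\langle S_0\mid R_0\rangle\to G$ need not be injective: for generic long words $W_1\neq W_2$ over $S_{0\circ}$ and a letter $t\notin S_0$, the relators $t^{k}W_1$ and $t^{k}W_2$ can satisfy $C'(\sfrac{1}{6})$ while forcing $W_1=_G W_2$, so that $W_1W_2^{-1}$ is a nontrivial word over $S_0$ that is trivial in $G$ but not in $\langle S_0\mid R_0\rangle$. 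Instead one must take for $R_0$ an appropriate (e.g.\ minimal) set of relations among $S_0$ holding in $G$, so that $\langle S_0\mid R_0\rangle\cong\langle S_0\rangle$ by construction, and then verify the $C'(\sfrac{1}{6})$ condition for this induced relator set. This is done by analysing minimal van Kampen diagrams over $\langle S\mid R\rangle$: a long common piece of two induced relators would propagate to a long common subword (or self-overlap) of members of $R$, contradicting the $C'(\sfrac{1}{6})$ hypothesis on $R$. An alternative route, bypassing any presentation of $\langle S_0\rangle$, is to observe that the proof of \cref{mainProp} uses Strebel's classification only combinatorially and so goes through for the Cayley graph $\Gamma(G,S)$ of an \emph{infinitely} generated $C'(\sfrac{1}{6})$ group, giving $\asdim\Gamma(G,S)\leq 2$, and then to use that $\langle S_0\rangle$ is undistorted in $G$, so that $(\langle S_0\rangle,d_{S_0})\hookrightarrow\Gamma(G,S)$ is a quasi-isometric, hence coarse, embedding.

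The main obstacle is precisely this last point: showing that a subgroup generated by finitely many of the generators of a $C'(\sfrac{1}{6})$ group is again $C'(\sfrac{1}{6})$ (equivalently, that it is undistorted and that the tight-combing estimate survives on the infinitely generated Cayley graph). Everything else — the supremum formula, monotonicity of $\asdim$ under finitely generated subgroups, and the appeal to \cref{mainThm} — is routine.
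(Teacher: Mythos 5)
The paper gives no argument for this corollary beyond the sentence preceding it: it adopts the definition $\asdim(G)=\sup\{\asdim(H)\mid H\leqslant G\text{ finitely generated}\}$ and presents the bound as an immediate consequence of \cref{mainThm}. Your reduction---every finitely generated $H$ lies in $\langle S_0\rangle$ for some finite $S_0\subseteq S$, and $\asdim$ is monotone under the coarse embedding of a finitely generated subgroup into a finitely generated ambient group---is correct and is the only reasonable reading of that sentence. You are also right that the remaining step is not free: the sub-presentation on $S_0$ with the relators of $R$ involving only letters of $S_0$ need not present $\langle S_0\rangle$, and your $t^{k}W_1$, $t^{k}W_2$ example validly demonstrates this.

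The problem is that you then stop. Neither proposed repair is carried out, so the crucial claim---that $\langle S_0\rangle$ is a finitely generated $C'(\sfrac{1}{6})$ group, or at least satisfies $\asdim\leq 2$---is never established, as you yourself concede. Moreover, your second route is unsound as stated: $\langle S_0\rangle$ need not be undistorted in $(G,d_S)$. Take $S=\{a,b\}\cup\{t_n\mid n\geq 1\}$ and $R=\{t_n^{-1}U_n\mid n\geq 1\}$, where the $U_n$ are freely reduced words in $a^{\pm1},b^{\pm1}$ of length tending to infinity, chosen Rips-style so that $R$ is $C'(\sfrac{1}{6})$ (each $t_n$ occurs in exactly one relator, so pieces only arise from overlaps among the $U_n^{\pm1}$ and can be made arbitrarily short). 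Eliminating each $t_n$ by a Tietze transformation shows $G\cong F(a,b)$ with $t_n=\overline{U_n}$; hence $\langle a,b\rangle=G$, $\|t_n\|_S=1$, yet $\|t_n\|_{\{a,b\}}=|U_n|\to\infty$. So the inclusion $(\langle a,b\rangle,d_{\{a,b\}})\hookrightarrow(G,d_S)$ is not a coarse embedding, and a bound on $\asdim\Gamma(G,S)$ does not transfer to the subgroup with its own word metric. (The corollary survives in this example only because $F(a,b)$ happens to be one-dimensional.) Your first route---extracting a $C'(\sfrac{1}{6})$, or otherwise controlled, presentation of $\langle S_0\rangle$ from minimal van Kampen diagrams over $\langle S\mid R\rangle$---is where all the content lies, and as written it is a one-sentence gesture: ``an appropriate (e.g.\ minimal) set of relations'' is not defined, and the claimed propagation of pieces is not argued. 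To be fair, the paper elides exactly this same point; but as a self-contained proof your proposal does not yet establish the statement.
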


Combining \cref{mainThm} with results of Fujiwara and Whyte \cite{Fujiwara_Whyte} and Gentimis \cite{Gentimis}, we know that if $G$ is a finitely presented $C'(\sfrac{1}{6})$ group, then $\asdim(G) = \asdimAN(G) = 1$ if $G$ is virtually free, and $\asdim(G) = \asdimAN(G) = 2$ otherwise.  However, for infinitely presented $C'(\sfrac{1}{6})$ groups, the question remains.

\begin{question}
Suppose $G$ is a finitely generated, infinitely presented $C'(\sfrac{1}{6})$ group.  When is $\asdim(G)$ or $\asdimAN(G)$ equal to 1, and when is it equal to 2?
\end{question}

\pagebreak

\begin{bibdiv}
\begin{biblist}
\bib{Assouad}{article}{
  author={Assouad, P.},
  title={Sur la distance de Nagata},
  journal={CR Acad. Paris},
  volume={294},
  pages={31--34},
  year={1982}
}

\bib{Gromov}{article}{
    AUTHOR = {Gromov, M.},
     TITLE = {Asymptotic invariants of infinite groups},
 BOOKTITLE = {Geometric group theory, {V}ol. 2 ({S}ussex, 1991)},
    SERIES = {London Math. Soc. Lecture Note Ser.},
    VOLUME = {182},
     PAGES = {1--295},
 PUBLISHER = {Cambridge Univ. Press, Cambridge},
      YEAR = {1993},
   MRCLASS = {20F32 (57M07)},
  MRNUMBER = {1253544},
}

\bib{Bell_Dranishnikov}{article}{
    AUTHOR = {Bell, G. C.},
    AUTHOR = {Dranishnikov, A. N.},
     TITLE = {Asymptotic dimension},
   JOURNAL = {Topology Appl.},
  FJOURNAL = {Topology and its Applications},
    VOLUME = {155},
      YEAR = {2008},
    NUMBER = {12},
     PAGES = {1265--1296},
      ISSN = {0166-8641},
   MRCLASS = {55M10 (20F69)},
  MRNUMBER = {2423966},
MRREVIEWER = {Piotr W. Nowak},
       DOI = {10.1016/j.topol.2008.02.011},
       URL = {https://doi.org/10.1016/j.topol.2008.02.011},
}

\bib{Bedlewo}{article}{
    AUTHOR = {Bell, G. C.},
    AUTHOR = {Dranishnikov, A. N.},
     TITLE = {Asymptotic dimension in {B}\polhk edlewo},
   JOURNAL = {Topology Proc.},
  FJOURNAL = {Topology Proceedings},
    VOLUME = {38},
      YEAR = {2011},
     PAGES = {209--236},
      ISSN = {0146-4124},
   MRCLASS = {54F45 (20F69)},
  MRNUMBER = {2725304},
MRREVIEWER = {Sylvain Maillot},
}

\bib{Brodskiy_etal}{article}{
  AUTHOR = {Brodskiy, N.},
  AUTHOR = {Dydak, J.},
  AUTHOR = {Levin, M.},
  AUTHOR = {Mitra, A.},
     TITLE = {A {H}urewicz theorem for the {A}ssouad-{N}agata dimension},
   JOURNAL = {J. Lond. Math. Soc. (2)},
  FJOURNAL = {Journal of the London Mathematical Society. Second Series},
    VOLUME = {77},
      YEAR = {2008},
    NUMBER = {3},
     PAGES = {741--756},
      ISSN = {0024-6107},
   MRCLASS = {54F45 (18B30 20F99 54E35)},
  MRNUMBER = {2418302},
MRREVIEWER = {Klaas Pieter Hart},
       DOI = {10.1112/jlms/jdn005},
       URL = {https://doi.org/10.1112/jlms/jdn005},
}

\bib{Higes2}{article}{
  title={Assouad-Nagata dimension of nilpotent groups with arbitrary left invariant metrics},
  author={Higes, J.},
  journal={Proceedings of the American Mathematical Society},
  volume={138},
  number={6},
  pages={2235--2244},
  year={2010}
}

\bib{Fujiwara_Whyte}{article}{
    AUTHOR = {Fujiwara, K.},
    AUTHor = {Whyte, K.},
     TITLE = {A note on spaces of asymptotic dimension one},
   JOURNAL = {Algebr. Geom. Topol.},
  FJOURNAL = {Algebraic \& Geometric Topology},
    VOLUME = {7},
      YEAR = {2007},
     PAGES = {1063--1070},
      ISSN = {1472-2747},
   MRCLASS = {20F69 (57M50)},
  MRNUMBER = {2336248},
MRREVIEWER = {Alexander N. Dranishnikov},
       DOI = {10.2140/agt.2007.7.1063},
       URL = {https://doi.org/10.2140/agt.2007.7.1063},
}

\bib{Gentimis}{article}{
    AUTHOR = {Gentimis, T.},
     TITLE = {Asymptotic dimension of finitely presented groups},
   JOURNAL = {Proc. Amer. Math. Soc.},
  FJOURNAL = {Proceedings of the American Mathematical Society},
    VOLUME = {136},
      YEAR = {2008},
    NUMBER = {12},
     PAGES = {4103--4110},
      ISSN = {0002-9939},
   MRCLASS = {20F69 (20F65)},
  MRNUMBER = {2431020},
MRREVIEWER = {Piotr W. Nowak},
       DOI = {10.1090/S0002-9939-08-08973-9},
       URL = {https://doi.org/10.1090/S0002-9939-08-08973-9},
}

\bib{Agol}{webpage}{
  title={Answer to ``Asymptotic dimension of $C'(1/6)$ small cancellation groups"},
  author={Agol, I.},
  journal={MathOverflow},
  month={2}
  year={2015},
  url={https://mathoverflow.net/questions/195489/asymptotic-dimension-of-c1-6-small-cancellation-groups}
}

\bib{Buyalo_Lebedeva}{article}{
    AUTHOR = {Buyalo, S. V.},
    AUTHOR = {Lebedeva, N. D.},
     TITLE = {Dimensions of locally and asymptotically self-similar spaces},
   JOURNAL = {Algebra i Analiz},
  FJOURNAL = {Rossi\u{\i}skaya Akademiya Nauk. Algebra i Analiz},
    VOLUME = {19},
      YEAR = {2007},
    NUMBER = {1},
     PAGES = {60--92},
      ISSN = {0234-0852},
   MRCLASS = {57M99 (54F45 55M10)},
  MRNUMBER = {2319510},
       DOI = {10.1090/S1061-0022-07-00985-5},
       URL = {https://doi.org/10.1090/S1061-0022-07-00985-5},
}

\bib{Osajda}{article}{
  title={Small cancellation labellings of some infinite graphs and applications},
  author={Osajda, D.},
  journal={arXiv preprint 1406.5015},
  year={2014}
}

\bib{Bowditch}{article}{
    AUTHOR = {Bowditch, B. H.},
     TITLE = {Tight geodesics in the curve complex},
   JOURNAL = {Invent. Math.},
  FJOURNAL = {Inventiones Mathematicae},
    VOLUME = {171},
      YEAR = {2008},
    NUMBER = {2},
     PAGES = {281--300},
      ISSN = {0020-9910},
   MRCLASS = {57M50 (20F65)},
  MRNUMBER = {2367021},
MRREVIEWER = {Jason A. Behrstock},
       DOI = {10.1007/s00222-007-0081-y},
       URL = {https://doi.org/10.1007/s00222-007-0081-y},
}

\bib{Bell_Fujiwara}{article}{
    AUTHOR = {Bell, G. C.},
    AUTHOR = {Fujiwara, K.},
     TITLE = {The asymptotic dimension of a curve graph is finite},
   JOURNAL = {J. Lond. Math. Soc. (2)},
  FJOURNAL = {Journal of the London Mathematical Society. Second Series},
    VOLUME = {77},
      YEAR = {2008},
    NUMBER = {1},
     PAGES = {33--50},
      ISSN = {0024-6107},
   MRCLASS = {57M99 (20F69 57M15)},
  MRNUMBER = {2389915},
       DOI = {10.1112/jlms/jdm090},
       URL = {https://doi.org/10.1112/jlms/jdm090},
}

\bib{Roe}{article}{
    AUTHOR = {Roe, J.},
     TITLE = {Hyperbolic groups have finite asymptotic dimension},
   JOURNAL = {Proc. Amer. Math. Soc.},
  FJOURNAL = {Proceedings of the American Mathematical Society},
    VOLUME = {133},
      YEAR = {2005},
    NUMBER = {9},
     PAGES = {2489--2490},
      ISSN = {0002-9939},
   MRCLASS = {20F67 (55M10)},
  MRNUMBER = {2146189},
       DOI = {10.1090/S0002-9939-05-08138-4},
       URL = {https://doi.org/10.1090/S0002-9939-05-08138-4},
}

\bib{Osin}{article}{
    AUTHOR = {Osin, D.},
     TITLE = {Asymptotic dimension of relatively hyperbolic groups},
   JOURNAL = {Int. Math. Res. Not.},
  FJOURNAL = {International Mathematics Research Notices},
      YEAR = {2005},
    NUMBER = {35},
     PAGES = {2143--2161},
      ISSN = {1073-7928},
   MRCLASS = {20F67 (20F69)},
  MRNUMBER = {2181790},
MRREVIEWER = {Daniel P. Groves},
       DOI = {10.1155/IMRN.2005.2143},
       URL = {https://doi.org/10.1155/IMRN.2005.2143},
}

\bib{Strebel}{book}{
  TITLE = {Sur les groupes hyperboliques d'apr\`es {M}ikhael {G}romov},
    SERIES = {Progress in Mathematics},
    VOLUME = {83},
    AUTHOR = {Ghys, \'{E}.},
    AUTHOR = {de la Harpe, P.},
 PUBLISHER = {Birkh\"{a}user Boston, Inc., Boston, MA},
      YEAR = {1990},
     PAGES = {xii+285},
      ISBN = {0-8176-3508-4},
   MRCLASS = {53C23 (20F32 57M05 57S30)},
  MRNUMBER = {1086648},
MRREVIEWER = {Viktor Schroeder},
       DOI = {10.1007/978-1-4684-9167-8},
       URL = {https://doi.org/10.1007/978-1-4684-9167-8},
}

\bib{Lyndon_Schupp}{book}{
    AUTHOR = {Lyndon, R. C.},
    AUTHOR = {Schupp, P. E.},
     TITLE = {Combinatorial group theory},
    SERIES = {Classics in Mathematics},
      NOTE = {Reprint of the 1977 edition},
 PUBLISHER = {Springer-Verlag, Berlin},
      YEAR = {2001},
     PAGES = {xiv+339},
      ISBN = {3-540-41158-5},
   MRCLASS = {20Fxx (20Exx 57M07)},
  MRNUMBER = {1812024},
       DOI = {10.1007/978-3-642-61896-3},
       URL = {https://doi.org/10.1007/978-3-642-61896-3},
}
  
\end{biblist}
\end{bibdiv}
\end{document}